\documentclass{amsart}
\usepackage{mathtools, microtype, mathrsfs, xfrac, hyperref, amssymb, enumerate, xspace, stmaryrd}
\usepackage[utf8]{inputenc}
\usepackage{tikz-cd}
\usepackage{pdflscape}
\usepackage{bm}

\usepackage{todonotes}



\numberwithin{equation}{section}

\setcounter{tocdepth}{1}

\numberwithin{subsection}{section}


\allowdisplaybreaks[1]


\newtheorem*{namedtheorem}{\theoremname}
\newcommand{\theoremname}{testing}

\newtheorem{theorem}{Theorem}[section]
\newtheorem{proposition}[theorem]{Proposition}
\newtheorem{proposition-definition}[theorem]
{Proposition-Definition}
\newtheorem{corollary}[theorem]{Corollary}
\newtheorem{lemma}[theorem]{Lemma}
\newtheorem*{theorem*}{Theorem}

\theoremstyle{definition}
\newtheorem{definition}[theorem]{Definition}

\newtheorem{remark}[theorem]{Remark}

\newtheorem*{question*}{Question}

\theoremstyle{remark}



\renewcommand{\mathcal}{\mathscr}

 \newcommand\cB{\mathcal{B}}
\newcommand\cC{\mathcal{C}} 
 
\newcommand\cG{\mathcal{G}}

 \newcommand\cP{\mathcal{P}}
 
\newcommand\cS{\mathcal{S}} 
 
 \newcommand\cX{\mathcal{X}}

\newcommand\CC{\mathbb{C}} 
 \newcommand\FF{\mathbb{F}}

 \newcommand\PP{\mathbb{P}}
\newcommand\QQ{\mathbb{Q}} \newcommand\RR{\mathbb{R}}

 \newcommand\ZZ{\mathbb{Z}}

\newcommand\bC{\mathbf{C}}

 \newcommand\bL{\mathbf{L}}
 
 \newcommand\bP{\mathbf{P}}

 \newcommand\rR{\mathrm{R}}

\newcommand\fC{\mathfrak{C}}

\newcommand\fG{\mathfrak{G}} 
\newcommand\fH{\mathfrak{H}}

\newcommand\fK{\mathfrak{K}}

\newcommand\fN{\mathfrak{N}}

\newcommand\fQ{\mathfrak{Q}} 
 
\newcommand\fS{\mathfrak{S}}


\newcommand\arr{\ifinner\to\else\longrightarrow\fi}

\newcommand\arrto{\ifinner\mapsto\else\longmapsto\fi}

\renewcommand\H{\operatorname{H}}

\def\displaytimes_#1{\mathrel{\mathop{\times}\limits_{#1}}}

\def\displayotimes_#1{\mathrel{\mathop{\bigotimes}\limits_{#1}}}

\newcommand\aut{\operatorname{Aut}}

\newcommand\spec{\operatorname{Spec}}

\newcommand\id{\mathrm{id}}


\newcommand{\underaut}{\mathop{\underline{\mathrm{Aut}}}\nolimits}

\newlength{\ignora}

\renewcommand{\setminus}{\smallsetminus}

\newcommand{\GL}{\mathrm{GL}}
\newcommand{\SL}{\mathrm{SL}}
\newcommand{\PGL}{\mathrm{PGL}}
\newcommand{\PSL}{\mathrm{PSL}}

\newcommand{\gal}{\operatorname{Gal}}

\DeclareFontFamily{U}{mathx}{\hyphenchar\font45}
\DeclareFontShape{U}{mathx}{m}{n}{
      <5> <6> <7> <8> <9> <10>
      <10.95> <12> <14.4> <17.28> <20.74> <24.88>
      mathx10
      }{}
\DeclareSymbolFont{mathx}{U}{mathx}{m}{n}
\DeclareFontSubstitution{U}{mathx}{m}{n}
\DeclareMathAccent{\widecheck}{0}{mathx}{"71}
\DeclareMathAccent{\wideparen}{0}{mathx}{"75}

\renewcommand{\epsilon}{\varepsilon}

\newcommand{\cha}{\operatorname{char}}



\newcommand{\diag}{\operatorname{diag}}


\begin{document}

\title{The field of moduli of plane curves}

\author{Giulio Bresciani}

\begin{abstract}
	We prove that a smooth, complex plane curve of odd degree can be defined by a polynomial with coefficients in $\mathbb{R}$ if and only if it is isomorphic to its complex conjugate; there are counterexamples in even degree. Over arbitrary base fields of characteristic $0$, we prove that a smooth plane curve of degree prime with $6$ can be defined by a polynomial with coefficients in the field of moduli. 
	
	We also prove results about fields of moduli of algebraic cycles in $\mathbb{P}^{2}$. In particular, these apply to singular plane curves of arbitrary degree, too.
\end{abstract}

\address[G. Bresciani]{Dipartimento di Matematica, Università di Pisa, Italy}
\email{giulio.bresciani1@unipi.it}


\maketitle

\tableofcontents

\section{Introduction}

We work over a field $k$ of characteristic $0$ with algebraic closure $K$. Let $C\subset\PP^{2}_{K}$ be a smooth plane curve.

\begin{question*}
	Does it exist a minimal extension $k'\subset K$ of $k$ such that $C$ is defined by a homogeneous polynomial with coefficients in $k'$?
\end{question*}

More generally, we say that a subfield $k'\subset K$ is a field of definition if $C$ has a model over $k'$, and we can ask whether a minimal field of definition exists.

Despite being a natural, easily stated question, little is known about it, with clear results available only for curves up to degree $4$.

\subsection*{Fields of moduli}

There is a good candidate for the minimal field of definition: the so-called \emph{field of moduli} of the curve. Let $H\subset\gal(K/k)$ be the subgroup of elements $\sigma\in\gal(K/k)$ such that $\sigma^{*}C$ is isomorphic to $C$ over $K$. The field of moduli $k_{C}$ of $C$ is the subfield of $K$ fixed by $H$; it coincides with the residue field of the corresponding point of the coarse moduli space. Furthermore, it coincides with the field of moduli of the embedded curve, i.e. of the pair $(\PP^{2}_{K},C)$, see Lemma~\ref{lemma:plane}.

The intersection of the fields of definition is almost always the field of moduli (e.g. if $k=\QQ$), and when this fails the intersection is $k_{C}(i)$, see Proposition~\ref{proposition:fieldint}. The question above is then essentially equivalent to the following.

\begin{question*}
	Is $C$ defined by a homogeneous polynomial with coefficients in $k_{C}$?
\end{question*}

\subsection*{Known results}

If $C$ has degree $3$, i.e. it is an elliptic curve, it is a classical fact rooted in the work of F. Klein \cite{klein} that $k_{C}$ is generated over $k$ by the $j$-invariant and that $C$ can be defined by a homogeneous polynomial with coefficients in $k(j)$. This fact is a cornerstone for much of the theory about elliptic curves.

M. Artebani and S. Quispe \cite[Theorem 0.2]{artebani-quispe} proved that, if $k=\RR$ and $K=\CC$, a smooth plane quartic $C$ over $\CC$ with $\aut(C)\neq C_{2}$ is defined over the field of moduli, and have given a counterexample with $\aut(C)=C_{2}$. Much less is known in higher degree. Before the present article, it was not known whether there existed a $d>4$ such that all curves of degree $d$ are defined over the field of moduli.

B. Huggins proved that the automorphism group of a plane curve not defined over the field of moduli is either diagonal or one of two non-abelian groups \cite[Theorem 6.4.8]{hugginsphd}. E. Badr also proved that some complex plane curves with field of moduli $\RR$ such that $\aut(\PP^{2}_{\CC},C)$ satisfies a technical condition are defined over $\RR$ \cite{badr20}. Various types of examples of plane curves not defined over the field of moduli can be found in \cite{hugginsphd} \cite{artebani-quispe} \cite{badr-bars-garcia} \cite{badr-bars}. We are not aware of other results.

\subsection*{New results}

We prove that plane curves of degree prime with $6$ are defined over the field of moduli, and that odd degree is sufficient if $k=\RR$.

\begin{theorem}\label{theorem:no6}
	A smooth plane curve over $K$ of degree prime with $6$ and with field of moduli $k$ is defined by a homogeneous polynomial with coefficients in $k$.
\end{theorem}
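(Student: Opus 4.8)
The plan is to reduce the statement to the neutrality of a residual gerbe, and then to exploit the coprimality of $d$ with $6$ through the automorphism group of $C$. The role of the two prime factors is different: primality with $3$ will only serve to trivialize a Brauer--Severi surface, while primality with $2$ will be what forces the descent obstruction to vanish.

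First I would reduce to the existence of a model. By Lemma~\ref{lemma:plane} the field of moduli of $C$ as an abstract curve agrees with that of the pair $(\PP^2_K, C)$, so the hypothesis is that this common field of moduli is $k$. It therefore suffices to produce a single model $\fC$ of $C$ over $k$: by Corollary~\ref{corollary:brauer} (the Ro\`e--Xarles embedding) such a model embeds canonically in a Brauer--Severi surface $P_{\fC}$ over $k$, whose index divides $3$; since $\fC$ is an effective cycle of degree $d$ on $P_{\fC}$ and $\gcd(d,3)=1$, the index must be $1$, so $P_{\fC} \simeq \PP^2_k$ and $\fC$ is cut out by a homogeneous polynomial with coefficients in $k$. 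Thus the whole problem is to construct a model over $k$.

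Next I would reformulate descent as neutrality of a gerbe, following \cite{giulio-angelo-moduli}. Present $C$ as a point of the moduli stack $\cM_d = [U/\PGL_3]$, with $U$ the open locus of smooth degree-$d$ forms inside $\PP(\operatorname{Sym}^d (K^3)^\vee)$. Since the field of moduli is $k$, the associated point of the coarse space is $k$-rational and yields a residual gerbe $\cG \to \spec k$ banded by a $k$-form $\uG$ of the finite group $\aut(C)$. A model of $C$ over $k$ is exactly a section $\spec k \to \cG$, so it suffices to prove that $\cG$ is neutral; the obstruction is its class in $\H^2(k, Z)$, where $Z = Z(\uG)$ is the center. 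The crucial input is then to lift the picture to $\GL_3$: the scaling action of the central $\gm \subset \GL_3$ on forms is through its $d$-th power, which produces a central extension $1 \to \mmu_d \to E \to \aut(C) \to 1$ with $E = \operatorname{Stab}_{\GL_3}(F)$. Because $d$ is odd, $\mmu_d$ has odd order, so the extension admits a canonical splitting over the $2$-primary part of $\aut(C)$ and lets one trade $\cG$ for a $\mmu_d$-banded gerbe whose class is $d$-torsion. I would then decompose the obstruction into its $3$-primary part, which coincides with the Brauer--Severi class and is already annihilated once $P_{\fC}\simeq\PP^2_k$, and a complementary $2$-primary part carried by the $2$-torsion of $Z$; the latter is simultaneously $d$-torsion, hence killed by $\gcd(d,2)=1$. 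Combining the two, the class in $\H^2(k,Z)$ vanishes and $\cG$ is neutral.

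The hard part will be this last step: controlling the center $Z = Z(\aut C)$ for an \emph{arbitrary} smooth plane curve and proving that, after removing the Brauer--Severi contribution, the residual obstruction is $2$-primary and $d$-torsion. This needs the structure theory of finite subgroups of $\PGL_3$ stabilizing a smooth degree-$d$ form, together with the neutrality criterion for tame residual gerbes of \cite{giulio-angelo-moduli}, which is precisely the machinery that converts the purely numerical hypothesis $\gcd(d,6)=1$ into the vanishing of the $2$- and $3$-primary parts of the descent obstruction.
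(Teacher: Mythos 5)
Your reduction steps are sound and match the paper: Lemma~\ref{lemma:plane} identifies the field of moduli of $C$ with that of $(\PP^2_K,C)$, and once a model $\fC$ over $k$ exists, Corollary~\ref{corollary:brauer} plus the index argument (the index of $P_{\fC}$ divides both $3$ and $d$) forces $P_{\fC}\simeq\PP^2_k$. The gap is in the core step, where you claim the obstruction to neutrality of the residual gerbe splits into a $3$-primary part ``coinciding with the Brauer--Severi class'' and a $2$-primary part that is ``$d$-torsion''. Neither claim has a mechanism behind it. First, $\aut(C)$ is in general non-abelian (the Fermat quintic has $\aut(C)\simeq C_5^2\rtimes S_3$, with $d=5$ prime to $6$ and $|\aut(C)|$ divisible by $6$), so $\cG$ is banded by a non-abelian band and its neutrality is not measured by a class in an abelian group: $\H^2(k,Z)$ acts simply transitively on the set of classes of gerbes with that band, but there is no distinguished zero, and no primary decomposition of ``the obstruction'' makes sense. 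Second, even for abelian $\aut(C)$ the gerbe class is $\exp(\aut(C))$-torsion, not $d$-torsion; the central extension $1\to\mmu_d\to E\to\aut(C)\to 1$ controls only the \emph{relative} $\mmu_d$-gerbe comparing polynomial models with Brauer--Severi models, i.e.\ exactly the part you already disposed of by the index argument, and gives no hold on the neutrality of $\cG$ itself. Third, $3$-primary obstructions to descending to \emph{any} Brauer--Severi surface genuinely exist (the paper's critical groups $C_a\times C_{an}$ with $3\mid an$, realized over $k=\CC((s))((t))$), so they are not accounted for by trivializing $P_{\fC}$.

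What actually closes the argument in the paper is geometric, not a torsion count. Theorem~\ref{theorem:critical} shows, group by group, that every non-critical finite subgroup of $\PGL_3$ permits descent to a Brauer--Severi surface: one exhibits a distinguished orbit whose image is a rational point of the compression $\bP_{\xi}$ with singularity of type $\rR$ and lifts it by the Lang--Nishimura theorem for tame stacks; coprimality of $d$ plays no role there. Then, for a \emph{curve} of degree prime to $3$, the critical groups are excluded using the geometry of $C$: since $3\nmid d$ the genus $(d-1)(d-2)/2$ is divisible by $3$, and Riemann--Hurwitz modulo $3$ rules out every critical group of order divisible by $3$ (namely $C_a\times C_{an}$ with $3\mid an$, $H_2$, $H_3$), while the remaining critical groups $C_a\times C_{a2^bn}$ force the divisibility relations $2^ban\mid d$, resp.\ $2^{b+1}n\mid d(d-2)$, obtained by counting orbits of $C\cap L$ for distinguished lines $L$ --- relations that are impossible when $d$ is odd. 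So the hypothesis $\gcd(d,6)=1$ enters through orbit cardinalities and Riemann--Hurwitz on the curve itself, not by killing torsion in Galois cohomology; your proposal is missing exactly this geometric input, and no refinement of the cohomological bookkeeping can substitute for it.
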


\begin{theorem}\label{theorem:realcomplex}
	Let $C$ be a smooth complex plane curve of odd degree and $\bar{C}$ its complex conjugate. The following are equivalent.
	\begin{enumerate}[(i)]
		\item There exists a homogeneous polynomial with real coefficients $p\in\RR[x,y,z]$ defining $C$.
		\item There exists a homogeneous polynomial $p\in\CC[x,y,z]$ for $C$ and a linear transformation $g\in\GL_{3}(\CC)$ such that $p\circ g=\bar{p}$.  
		\item The curves $C$ and $\bar{C}$ are isomorphic as abstract curves.
	\end{enumerate}
\end{theorem}

We find it surprising that Theorem~\ref{theorem:realcomplex} has gone unnoticed until now, since its statement is completely elementary and plane complex curves have been studied for centuries. Still, we couldn't find it in the literature. Counterexamples are known in even degree \cite[\S 7.1]{hugginsphd}.

We also prove that most plane curves are defined over the field of moduli even if the degree is not prime with $6$: only a small fraction of the groups considered by Huggins in \cite[Theorem 6.4.8]{hugginsphd} can actually appear as automorphism groups of plane curves not defined over the field of moduli, and this fraction is even smaller for curves whose degree is either odd or prime with $3$, see Theorems~\ref{theorem:no3}, \ref{theorem:odd}, \ref{theorem:sextics}. In particular, in Corollary~\ref{corollary:quartics} we prove that if $C$ is a quartic and $|\aut(C)|\neq 2$ then $C$ is defined by a homogeneous polynomial with coefficients in the field of moduli, thus generalizing to arbitrary base fields of characteristic $0$ the result of Artebani and Quispe \cite{artebani-quispe}.

\subsection*{Algebraic cycles}

Theorems~\ref{theorem:no6} and \ref{theorem:realcomplex} are direct consequences of the following Theorem~\ref{theorem:six}, which holds for arbitrary algebraic cycles in $\PP^{2}_{K}$ with finite automorphism group. In particular, it holds singular plane curves of arbitrary degree.

Given an algebraic cycle $Z$ on $\PP^{2}_{K}$, consider the subgroup $H\subset\gal(K/k)$ of Galois automorphisms $\sigma$ such that $\sigma^{*}Z$ is linearly equivalent to $Z$, i.e. there exists $g\in\PGL_{3}(K)$ such that $g(\sigma^{*}Z)=Z$; the field of moduli $k_{Z}$ of $Z$ is the fixed field $K^{H}$. If $Z=C$ is a smooth, plane curve of degree $\ge 4$, it coincides with the field of moduli $k_{C}$ of the abstract curve, see Lemma~\ref{lemma:plane}. Denote by $\aut_{K}(\PP^{2}_{K},Z)$ the subgroup of $\PGL_{3}(K)$ of projective linear transformations mapping $Z$ to itself.

\begin{theorem}\label{theorem:six}
	Let $Z$ be an algebraic cycle on $\PP^{2}_{K}$ such that $\aut_{K}(\PP^{2}_{K},Z)$ is finite, write $k_{Z}$ for the field of moduli. There exists a finite extension $k'/k_{Z}$ with $[k':k_{Z}]\le 3$ such that $Z$ descends to a cycle in $\PP^{2}_{k'}$. If $Z=C$ is a smooth plane curve, we can choose $k'$ so that $[k':k_{Z}]=[k':k_{C}]\mid\deg C$.
\end{theorem}

Theorem~\ref{theorem:six} is in turn based on the following Theorem~\ref{theorem:cycles}, which is an analysis of the possible automorphism groups of algebraic cycles not defined over the field of moduli. In order to state this other result, let us give a definition. Write $\diag(\alpha,\beta,\gamma)$ for the $3\times 3$ diagonal matrix with eigenvalues $\alpha,\beta,\gamma$, and $\zeta_{n}=e^{\frac{2\pi i}{n}}\in\bar{\QQ}$.

\begin{definition}
	Let $k$ be a field of characteristic $0$ with algebraic closure $K$. A finite subgroup $G\subset\PGL_{3}(K)$ is \emph{critical} if it is conjugate to
	\begin{itemize}
		\item the abelian subgroup $C_{a}\times C_{an}$ generated by $\diag(\zeta_{a},1,1)$, $\diag(1,\zeta_{a},1)$, $\diag(\zeta_{an},\zeta_{an}^{d},1)$ for positive integers $a,n,d$ satisfying $d^{2}-d+1\cong 0\pmod{n}$ and $3\mid an$,
		\item the abelian subgroup $C_{a}\times C_{a2^{b}n}$ generated by $\diag(\zeta_{a},1,1)$, $\diag(1,\zeta_{a},1)$, $\diag(\zeta_{a2^{b}n},\zeta_{a2^{b}n}^{d},1)$ for some positive integers $a,b,n,d$ with $d^{2}\cong 1\pmod{n}$, $d\cong\pm1\pmod{2^{b}}$, and $n$ odd.
		\item the Hessian subgroup $H_{2}\simeq C_{3}^{2}\rtimes C_{2}$ of order $18$ (see \S\ref{sect:hessian}).
		\item if $\zeta_{12}\not\in k$, the Hessian subgroup $H_{3}\simeq C_{3}^{2}\rtimes C_{4}$ of order $36$ (see \S\ref{sect:hessian}).
	\end{itemize}
	
	We say that $G$ is \emph{lucky} if it is not conjugate to one of the groups above, and furthermore it is not conjugate to
	\begin{itemize} 
		\item the abelian subgroup $C_{a}\times C_{an}$ generated by $\diag(\zeta_{a},1,1)$, $\diag(1,\zeta_{a},1)$, $\diag(\zeta_{an},\zeta_{an}^{d},1)$ for positive integers $a,n,d$ satisfying $d^{2}-d+1\cong 0\pmod{n}$,
		\item the Hessian subgroup $H_{1}\simeq C_{3}^{2}$ of order $9$ (see \S\ref{sect:hessian}).
	\end{itemize}
	In particular, if $\zeta_{12}\in k$ and $G$ is conjugate to $H_{3}$, then $G$ is lucky.
	
	As we will see, a subgroup of $\PGL_{3}$ is critical if it might give an obstruction to descent, while it is lucky if descent always works and as a bonus we may descend to $\PP^{2}$, as opposed to a Brauer-Severi surface. 
\end{definition}

\begin{remark}
	The large majority of finite subgroups of $\PGL_{3}$ are lucky, with the noteworthy exception of the trivial group which is not lucky nor critical; see \S\ref{sect:subgroups} for a complete list of finite subgroups of $\PGL_{3}$. In particular, only two non-abelian subgroups are not lucky, one if $\zeta_{12}\in k$, and the large majority of the abelian ones are lucky too.
\end{remark}

\begin{theorem}\label{theorem:cycles}
	Let $Z$ be a cycle on $\PP^{2}_{K}$ with $\aut(\PP^{2}_{K},Z)\subset\PGL_{3}(K)$ finite and with field of moduli equal to $k$. If $\aut(\PP^{2}_{K},Z)$ is not critical, then $Z$ descends to a cycle on some Brauer-Severi surface over $k$. If $G$ is lucky, $Z$ descends to $\PP^{2}_{k}$.
	
	On the other hand, if $G\subset\PGL_{3}(\bar{\QQ})$ is critical there exists a field $k$ of characteristic $0$ with algebraic closure $K$ and a cycle $Z$ on $\PP^{2}_{K}$ not defined over its field of moduli with $\aut(\PP^{2}_{K},Z)=G$. If $G$ is not conjugate to $H_{3}$, we may choose $k$ so that it contains $\CC$.
\end{theorem}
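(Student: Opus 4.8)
The plan is to translate the statement into the neutrality of a residual gerbe and then to reduce that to an explicit Galois-cohomological computation governed by the finite group $G=\aut(\PP^2_K,Z)$. Concretely, I would work in the quotient stack $[V/\PGL_3]$, where $V$ is the relevant parameter space of cycles (a Chow variety), so that a $k$-point is exactly a cycle on a Brauer--Severi surface over $k$. By the stack-theoretic formalism of \cite{giulio-angelo-moduli}, the cycle $Z$, viewed as a $K$-point of field of moduli $k$, carries a residual gerbe $\cG_Z\to\spec k$ banded by $G$, and $Z$ descends to a cycle on some Brauer--Severi surface over $k$ if and only if $\cG_Z$ is neutral. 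Unwinding this, one fixes for each $\sigma\in\gal(K/k)$ an isomorphism $\phi_\sigma\colon\sigma^*Z\xrightarrow{\sim}Z$ (these exist because the field of moduli is $k$) and forms the nonabelian $2$-cocycle $c(\sigma,\tau)=\phi_\sigma\,\sigma^*\phi_\tau\,\phi_{\sigma\tau}^{-1}\in G$; neutrality of $\cG_Z$ is the vanishing of $[c]$ in $\H^2(\gal(K/k),G)$. The finer question — whether one may descend all the way to $\PP^2_k$ rather than to a nontrivial Brauer--Severi surface — is separate: once $\cG_Z$ is neutral, the possible descents form a torsor under $\H^1(\gal(K/k),G)$, and $Z$ descends to $\PP^2_k$ precisely when the composite $\H^1(\gal(K/k),G)\to\H^1(\gal(K/k),\PGL_3)$ hits the trivial class, i.e. when the Brauer class in $\br(k)[3]$ classifying the resulting surface can be killed. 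The key structural input is the central extension $1\to\mmu_3\to\SL_3\to\PGL_3\to1$: the obstruction to linearising the projective action of $G$ on $\PP^2=\PP(K^3)$ is a class in $\H^2(G,\mmu_3)$, and it is this class that can force a nontrivial Brauer--Severi surface.

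For the positive direction I would run through the classification of finite subgroups $G\subset\PGL_3$ recalled in \S\ref{sect:subgroups}. For each type I would reduce $[c]\in\H^2(\gal(K/k),G)$ to abelian cohomology of a cyclic group: for the abelian types this is immediate, while for the Hessian groups $H_1,H_2,H_3$ one first disposes of the outer part of the band and is left with the centre. The arithmetic conditions in the definition of \emph{critical} and \emph{lucky} are exactly the conditions under which these abelian classes can be nonzero: the quantity $d^2-d+1=\Phi_6(d)$ records the order modulo $n$ of the relevant monomial automorphism, hence the interaction of the Galois action (through the cyclotomic character) with the diagonal generators $\diag(\zeta_{an},\zeta_{an}^{d},1)$, while $d^2\equiv 1$ and $d\equiv\pm1\pmod{2^b}$ isolate the $2$-primary analogue. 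The outcome I am aiming for is: if $G$ is not critical then $[c]$ is a coboundary, so $\cG_Z$ is neutral and $Z$ descends to some Brauer--Severi surface; and if moreover $G$ is lucky, the associated class in $\br(k)[3]$ can be killed by a further adjustment of the $\phi_\sigma$, so $Z$ descends to $\PP^2_k$. The genuinely intermediate groups — the abelian family with $d^2-d+1\equiv0\pmod n$ and the Heisenberg group $H_1\simeq C_3^2$ — are those for which the projective action does not linearise (the class in $\H^2(G,\mmu_3)$ is nonzero), so that $Z$ descends to a Brauer--Severi surface but the surface is forced nontrivial; this is why they are excluded from being lucky.

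I expect the main obstacle to be the bookkeeping for the non-abelian Hessian groups, where the band of $\cG_Z$ need not lift canonically and one must separately control the induced outer Galois action and the residual central class. The $\zeta_{12}$ hypothesis for $H_3$ enters exactly here, through the field generated by the relevant character values. A secondary subtlety is that "descends to $\PP^2_k$" is a statement about the whole orbit under $\H^1(\gal(K/k),G)$ rather than a single descent datum, so I must check that the freedom in choosing the $\phi_\sigma$ suffices to reach the trivial Brauer class whenever $G$ is lucky.

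For the negative direction I would, for a given critical $G\subset\PGL_3(\bar\QQ)$, build a field $k$ of characteristic $0$ and a cycle $Z$ realising a prescribed nonzero class. Since $G$ is one of finitely many explicit groups, it suffices to exhibit a single $\gal(K/k)$ together with a $G$-valued $2$-cocycle $c$ that is not a coboundary, and then to produce a cycle whose Galois twists are glued by $c$; the field of moduli is then $k$ by construction while $Z$ descends to no Brauer--Severi surface over $k$. For the abelian critical families and for $H_2$ one can take $k$ to be an iterated Laurent series field over $\CC$ (or a suitable number field), whose Galois group and $\H^2$ are explicit enough to carry the nontrivial class; this yields the stronger assertion that $k\supseteq\CC$ may be arranged. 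For $H_3$ the obstruction lives where $\zeta_{12}$ is missing, and since any field containing $\CC$ already contains $\zeta_{12}$ the class is automatically killed there — which is exactly why the $H_3$ counterexample cannot be taken over a field containing $\CC$, matching the statement.
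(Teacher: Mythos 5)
There is a genuine gap: your positive direction has no working mechanism. You reformulate descent as the vanishing of a nonabelian class $[c]\in\H^{2}(\gal(K/k),G)$ and then assert that for each $G$ in the classification this "reduces to abelian cohomology of a cyclic group"; for the many nonabelian groups in the list (type (A) nonabelian, type (C), $H_{4}$, $H_{5}$, $A_{5}$, $A_{6}$, $\PSL(2,7)$) no such reduction is available, and nothing in the proposal explains why the class should vanish. The paper's actual argument is geometric rather than cocycle-theoretic: for each $G$ one exhibits a \emph{distinguished} subset of $\PP^{2}_{K}$ (invariant under the full normalizer of $G$) on which $G$ acts transitively, so that its quotient descends to a rational point of the compression $\bP_{\xi}$ (a twisted form of $\PP^{2}/G$); one then checks, via the classification of quotient singularities of type $\rR$ and of groups of type $\rR_{2}$, that this point lifts through a resolution, and concludes $\cP_{\xi}(k)\neq\emptyset$ by the Lang--Nishimura theorem for tame stacks. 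This liftability analysis is the technical heart of the theorem (the paper stresses that in dimension $2$ a rational point of the coarse space is \emph{not} enough), and it is entirely absent from your plan. Your structural explanation of the dichotomy is also incorrect: the diagonal family $C_{a}\times C_{an}$ with $d^{2}-d+1\equiv 0\pmod n$ consists of diagonal matrices, so its projective action certainly linearises and the class in $\H^{2}(G,\mmu_{3})$ is zero; the condition $d^{2}-d+1\equiv 0\pmod n$ has nothing to do with the cyclotomic character — it is exactly the condition for the normalizer of $G$ to permute the three fixed points cyclically, which destroys the distinguished rational point and leaves only a distinguished degree-$3$ zero-cycle (whence neutrality of $\cG_{\xi}$ by a corestriction argument when $3\nmid an$, but no control on $\cP_{\xi}(k)$).

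The negative direction is also incomplete. Exhibiting a non-coboundary $G$-valued $2$-cocycle is not the same as producing a cycle $Z$ whose residual gerbe realizes it with $\aut(\PP^{2},Z)$ exactly $G$ and field of moduli exactly $k$; this realization step is what the paper obtains from the criterion that $\H^{1}(k,\fN)\to\H^{1}(k,\fN/\fG)$ fails to be surjective for a suitable normalizing subgroup $\fN\supset\fG$, an $\H^{1}$-level obstruction computed explicitly over $k=\CC\dr{s}\dr{t}$ (and over $\RR$ for $H_{3}$) by showing that certain $C_{3}^{2}$- or $C_{2}^{2}$-torsors do not lift because the relevant extensions admit no abelian subgroup surjecting onto the quotient. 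Your sketch gestures at the right fields but omits both the lifting computation and the passage from a cohomology class to an actual cycle.
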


Theorem~\ref{theorem:cycles} can be viewed as a generalization and sharpening of Huggins' theorem \cite[Theorem 6.4.8]{hugginsphd}. We remark that Huggins' version of the theorem is not sharp enough to obtain Theorems~\ref{theorem:no6}, \ref{theorem:realcomplex} and \ref{theorem:six}.

Theorems~\ref{theorem:six} and \ref{theorem:cycles} hold in even greater generality for \emph{algebraic structures} in the sense of \cite[\S 5]{giulio-angelo-moduli}, see Theorems~\ref{theorem:structures} and \ref{theorem:critical}. In \cite{giulio-points}, we study in detail the case of finite subsets of $\PP^{2}$.

\subsection*{A remark about our techniques} 

In our proofs, we heavily use the theory algebraic stacks: this is a novel approach for studying fields of moduli and fields of definition. We have laid the foundations of this approach in a recent joint work with A. Vistoli \cite{giulio-angelo-moduli}; the present article is the first instance in which our methods are applied in an essential way to an open problem. 

One of the main strategies we use is based on finding rational points on a certain smooth Deligne-Mumford stack $\cX$ with $\dim \cX=\dim X$, where $X$ is the variety we are studying (in our case, $X=\PP^{2}$). If $\dim X=1$, then $\cX$ has a rational point if and only if its coarse moduli space has a rational point. This fact is implicitly used in a result by P. Dèbes and M. Emsalem \cite[Corollary 4.3.c]{debes-emsalem} which is the backbone of a lot of the literature about fields of moduli of curves: the coarse moduli space of $\cX$ is what they call \emph{the canonical model of $X/\aut(X)$}.

If $\dim X\ge 2$, e.g. if $X=\PP^{2}$, this is simply wrong: the coarse moduli space of $\cX$ might have a rational point even though $\cX(k)=\emptyset$. The reason behind this is the fact that quotients of smooth varieties are not necessarily smooth in dimension $\ge 2$: if we pass to the coarse moduli space of $\cX$, we lose smoothness. Because of this, stacks are especially helpful when studying fields of moduli in dimension $\ge 2$.

\subsection*{Acknowledgements}

I would like to thank A. Vistoli for pointing out to me the fact that every automorphism of a smooth plane curve of degree $\ge 4$ is linear.

\subsection*{Notations and conventions}

We work over a field $k$ of characteristic $0$ with algebraic closure $K$. We fix a basis of $k^{3}$ so that it makes sense to talk about diagonal and permutation matrices (recall that a permutation matrix is a matrix which permutes the basis), and we fix a preferred embedding $\GL_{2}\subset\PGL_{3}$ using the first two coordinates.

With an abuse of terminology, we often identify matrices with their images in $\PGL_{3}(K)$. We write $\diag(a_{1},a_{2},a_{3})\in\PGL_{3}(K)$ for the image in $\PGL_{3}(K)$ of the diagonal $3\times 3$ matrix with eigenvalues $a_{1},a_{2},a_{3}$, so $\diag(a_{1},a_{2},a_{3})=\diag(\lambda a_{1},\lambda a_{2},\lambda a_{3})$ for $\lambda\neq 0$. We say that a subgroup of $\PGL_{3}(K)$ is diagonal if its elements are diagonal.

Let $G$ be a group acting on some space or set $X$, $Z\subset X$ a subspace, $g\in G$ an element. We say that $g$ \emph{stabilizes} $Z$, or that $Z$ is $g$-invariant, if $g(Z)=Z$. We say that $g$ \emph{fixes} $Z$ if $g$ restricts to the identity on $Z$. We say that $G$ stabilizes (resp. fixes) $Z$ if every element $g\in G$ stabilizes (resp. fixes) $Z$. The \emph{fixed locus} of $g$ (resp. $G$) is the subspace of points $x\in X$ with $gx=x$ (resp. $\forall g\in G:gx=x$).

\section{The two main strategies}\label{sect:strategies}

We recall some basic constructions from \cite{giulio-angelo-moduli} and \cite{giulio-fmod}. Let $k$ be a field of characteristic $0$ with algebraic closure $K$ and $\xi$ an algebraic structure, e.g. a cycle, on $\PP^{2}_{K}$ with field of moduli $k$ in the sense of \cite[\S 5]{giulio-angelo-moduli}, write $G=\aut(\PP^{2},\xi)\subset\PGL_{3}(K)$.

There is a finite gerbe $\cG_{\xi}$ over $k$, called the \emph{residual gerbe}, with a universal projective bundle $\cP_{\xi}\to\cG_{\xi}$ of relative dimension $2$ which is a twisted form over $k$ of the the natural morphism $[\PP^{2}_{K}/G]\to\cB_{K}G$. The residual gerbe $\cG_{\xi}$ and the projective bundle $\cP_{\xi}\to\cG_{\xi}$ are characterized by the following property: given a scheme $S$ over $k$, a morphism $S\to\cG_{\xi}$ corresponds to a twisted form of $\xi$ on the projective bundle $\cP_{\xi}|_{S}\to S$. In particular, $\xi$ descends to a structure on some Brauer-Severi surface over $k$ if and only if $\cG_{\xi}(k)\neq\emptyset$, and it descends to a structure on $\PP^{2}_{k}$ if and only if $\cP_{\xi}(k)\neq\emptyset$.

Let us break down the definition in the case of cycles. If $C\subset\PP^{2}_{K}$ is a reduced, irreducible closed subscheme and $S$ is a scheme over $k$, a twisted form of $C$ over $S$ is a projective bundle $\cP\to S$ with a closed subscheme $\cC\subset \cP$ such that there exists a finite subextension $K/k'/k$ and a scheme $S'$ over $k'$ with an étale covering $S'\to S$ such that $(\cP|_{S'}\times_{k'}K,\cC|_{S'}\times_{k'}K)\simeq (\PP^{2}_{K}\times_{K} S'_{K},C\times_{K} S'_{K})$. As expected, $C\subset\PP^{2}_{K}$ defines a twisted form of $C\subset\PP^{2}_{K}$: this follows from the fact that $C$ descends to some finite subextension $K/k'/k$ and hence we may take $S'=S=\spec K$ in the definition above. 

If $Z=\sum_{i}n_{i}C_{i}$ is a cycle, a twisted form of $Z$ over $S$ is a projective bundle $\cP\to S$ and a formal sum $\sum_{i}n_{i}\cC_{i}$ where $\cC_{i}\subset\cP$ is a twist of $C_{i}$. The residual gerbe $\cG_{Z}$ is the functor $S\mapsto$\{twisted forms of $Z$\}; if $\aut(\PP^{2},Z)$ is finite then $\cG_{Z}$ is a Deligne-Mumford stack which is a gerbe and $\cP_{Z}\to\cG_{Z}$ is the corresponding universal bundle given by Yoneda's lemma. The trivial twist of $Z$ defines a tautological morphism $\spec K\to\cG_{Z}$, and the coarse moduli space of $\cG_{Z}$ is the spectrum of the field of moduli of $Z$.

\subsection{Showing that $\xi$ is defined over $\PP^{2}_{k}$} The obvious question is: ho do we find rational points on $\cP_{\xi}$? Let $\bP_{\xi}$ be the coarse moduli space of $\cP_{\xi}$, it is called the \emph{compression} of $\xi$. Since $\cP_{\xi,K}=[\PP^{2}_{K}/G]$, then $\bP_{\xi,K}=\PP^{2}_{K}/G$. Since the action of $G$ on $\PP^{2}_{K}$ is faithful, the natural morphism $\cP_{\xi}\to\bP_{\xi}$ is birational, hence we have a rational map $\bP_{\xi}\dashrightarrow\cP_{\xi}$. Suppose that we find a rational point $p\in\bP_{\xi}(k)$ which lifts to a rational point of a resolution of singularities of $\bP_{\xi}$ (by the Lang--Nishimura theorem, this condition does not depend on the resolution). The Lang--Nishimura theorem for tame stacks \cite[Theorem 4.1]{giulio-angelo-valuative} then implies that $\cP_{\xi}(k)\neq\emptyset$.

So we want to find rational points on the compression $\bP_{\xi}$. A closed subspace $D\subset\PP^{2}_{K}$ is \emph{distinguished} \cite[Definition 17]{giulio-fmod} if $\tau(D)=D$ for every $\tau\in\PGL_{3}(K)$ such that $\tau^{-1}G\tau=G$; in particular, $D$ is $G$-invariant. If $D$ is distinguished, then $D/G\subset\PP^{2}/G$ descends to a closed subset of $\bP_{\xi}$ \cite[Lemma 18]{giulio-fmod}. In \cite[Example 19]{giulio-fmod} we give some strategies to construct distinguished subsets. Assume that the action of $G$ on $D$ is transitive, then $D/G$ descends to a rational point $p\in\bP_{\xi}(k)$.

A rational point of a variety is \emph{liftable} \cite[Definition 6.6]{giulio-angelo-moduli} if it lifts to a resolution of singularities. A singularity is \emph{of type $\rR$} if every twisted form of it is liftable. To show that $p$ is liftable, it is enough to check that its singularity is of type $\rR$ using the classification given in \cite{giulio-tqs2}. Let $d\in D$ be any point, $G_{d}\in G$ the stabilizer, the singularity of $\PP^{2}_{K}/G=\bP_{\xi,K}$ in the image of $d$ is equivalent \cite[\S 6.2]{giulio-angelo-moduli} to the one of $T_{d}\PP^{2}/G_{d}$ in the image of the origin, hence it is enough to show that $T_{d}\PP^{2}/G_{d}$ is of type $\rR$. Finally, a group is of type $\rR_{2}$ if the quotient of every faithful $2$-dimensional representation is of type $\rR$, hence sometimes it is sufficient to check that $G_{z}$ is $\rR_{2}$ using the classification given in \cite{giulio-tqs2}.

\subsection{Finding $\xi$ so that it is not defined over $k$}

Fix $G\subset\PGL_{3}(\bar{\QQ})$ finite. Suppose that we want to find a field $k$ and a structure $\xi$ on $\PP^{2}_{K}$ with $G=\aut(\PP^{2},\xi)\subset\PGL_{3}(K)$ such that the field of moduli of $\xi$ is $k$ but $\xi$ is not defined over $k$. First, we want to make sure that $G$ descends to a group scheme $\fG\subset\PGL_{3,k}$ acting on $\PP^{2}_{k}$, for instance we might restrict ourselves to fields containing enough elements to define each matrix of $G$.

If we find $k$ and a subgroup $\fN\subset\PGL_{3,k}$ containing and normalizing $\fG$ so that $\H^{1}(k,\fN)\to\H^{1}(k,\fN/\fG)$ is not surjective, then by \cite[Theorem 4]{giulio-fmod} there exists a structure $\xi$ on $\PP^{2}_{K}$ with field of moduli $k$ which is not defined over $k$. By \cite[Theorem 3]{giulio-fmod}, $\xi$ can be interpreted as the structure of some $0$-cycle.

\section{Finite subgroups of $\PGL_{3}$}\label{sect:subgroups}

Since we are in characteristic $0$, the finite subgroups of $\PGL_{3}(\bar{\QQ}), \PGL_{3}(K)$ and $\PGL_{3}(\CC)$ coincide, and they are completely classified, see \cite[Chapter XII]{mbd}. Before giving the list, for the convenience of the reader we recall some facts which will later play an important role.

\subsection{Abelian subgroups}

\begin{lemma}
	Let $\alpha\in K\setminus\{0,1\}$ be an element. The centralizer of $\diag(\alpha,\alpha,1)$ is $\GL_{2}(K)\subset\PGL_{3}(K)$.
\end{lemma}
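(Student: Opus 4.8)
The plan is to lift the statement to $\GL_3(K)$ and convert the centralizer condition into a system of entrywise equations. Fix the lift $D = \diag(\alpha,\alpha,1)\in\GL_3(K)$ and set $d_1=d_2=\alpha$, $d_3=1$. Let $M=(m_{ij})\in\GL_3(K)$ represent an arbitrary element of $\PGL_3(K)$ whose class commutes with the class of $D$. Commutativity in $\PGL_3(K)$ means $MDM^{-1}=\lambda D$ for some $\lambda\in K^\times$, i.e. $MD=\lambda DM$. Comparing $(i,j)$ entries gives $m_{ij}d_j=\lambda d_i m_{ij}$, that is
\[
  m_{ij}\,(d_j-\lambda d_i)=0\qquad\text{for all } i,j.
\]
So for each entry either $m_{ij}=0$ or $d_j=\lambda d_i$, and the whole argument reduces to reading off, for each admissible $\lambda$, the support of $M$ and testing invertibility.

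First I would record the easy inclusion: for $A\in\GL_2(K)$ the matrix $\left(\begin{smallmatrix}A&0\\0&1\end{smallmatrix}\right)$ commutes with $D$ already in $\GL_3(K)$, so the preferred image of $\GL_2(K)$ lies in the centralizer. For the reverse inclusion, since $d_i,d_j\in\{\alpha,1\}$ and $\alpha\notin\{0,1\}$, the ratio $d_j/d_i$ only takes the values $1,\alpha,\alpha^{-1}$; hence if $\lambda\notin\{1,\alpha,\alpha^{-1}\}$ every entry vanishes and $M=0$, which is excluded. When $\lambda=1$, a nonzero entry forces $d_i=d_j$, so $m_{13}=m_{23}=m_{31}=m_{32}=0$ and $M=\diag(A,c)$ is block diagonal with $A\in\GL_2$ and $c\neq 0$. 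Modulo scalars $\diag(A,c)=\diag(c^{-1}A,1)$, which is exactly the class of $c^{-1}A\in\GL_2(K)$; this case therefore contributes precisely $\GL_2(K)$.

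It remains to eliminate $\lambda=\alpha$ and $\lambda=\alpha^{-1}$. When $\alpha^2\neq 1$, a nonzero entry with $\lambda=\alpha$ (resp. $\lambda=\alpha^{-1}$) requires $d_j=\alpha d_i$ (resp. $d_j=\alpha^{-1}d_i$), whose only solutions are $(i,j)=(3,1),(3,2)$ (resp. $(1,3),(2,3)$); thus $M$ is supported on a single row (resp. column) and has rank $\le 1$, contradicting $M\in\GL_3(K)$. The one genuinely delicate point is the coincidence $\alpha=-1$, where $\alpha=\alpha^{-1}=\lambda$ and the forbidden ratios collapse: the constraints then allow exactly the anti-block positions, so
\[
  M=\begin{pmatrix}0&0&*\\0&0&*\\ *&*&0\end{pmatrix}.
\]
I would rule this out by a rank count: the first two columns are both multiples of $e_3$, hence linearly dependent, so $M$ is singular. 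Consequently no $\lambda\neq 1$ yields an invertible centralizing matrix, and together with the case $\lambda=1$ this shows the centralizer of $\diag(\alpha,\alpha,1)$ equals $\GL_2(K)$. The main obstacle is precisely this $\alpha=-1$ degeneracy, but the rank argument settles it cleanly.
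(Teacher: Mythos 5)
Your proof is correct, but it takes a genuinely different route from the paper. You linearize the problem: lift to $\GL_3(K)$, write the projective commutation as the twisted relation $MD=\lambda DM$, and read off the support of $M$ entrywise for each admissible $\lambda\in\{1,\alpha,\alpha^{-1}\}$, killing $\lambda\neq 1$ by rank considerations (including the genuine degeneracy $\alpha=-1$, where the anti-block shape is singular because its first two columns are proportional to $e_3$). The paper instead argues geometrically in one line: an element $g$ of the centralizer must stabilize the fixed locus of $\diag(\alpha,\alpha,1)$ acting on $\PP^{2}$, which is the disjoint union of the point $(0:0:1)$ and the line $\{(s:t:0)\}$; since $g$ cannot interchange a point with a line, it fixes the point and stabilizes the line, which forces the block form $\left(\begin{smallmatrix}A&0\\0&1\end{smallmatrix}\right)$, i.e.\ $g\in\GL_2(K)$. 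The geometric argument is shorter, absorbs the $\lambda\neq 1$ cases invisibly (an element satisfying $MD=\lambda DM$ with $\lambda\neq1$ would have to move the fixed locus nontrivially, e.g.\ swap the point and the line, which is impossible), and is the same technique the paper reuses for the centralizer of $\diag(\alpha,\beta,1)$ with distinct eigenvalues; your computation is more elementary and self-contained, and it makes explicit exactly where invertibility is used and why $\alpha=-1$ is the only delicate coincidence -- information the slick proof hides. Both establish the two inclusions correctly, so there is no gap.
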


\begin{proof}
	Let $g$ be an element of the centralizer, then $g$ must stabilize the fixed locus of $\diag(\alpha,\alpha,1)$, namely the point $(0:0:1)$ and the line $\{(s:t:0)\}$. The statement follows.
\end{proof}

\begin{lemma}\label{lemma:centralizer}
	Let $\alpha\neq\beta\in K\setminus\{0,1\}$ be different elements. If $\{\alpha,\beta\}=\{\zeta_{3},\zeta_{3}^{2}\}$, the centralizer of $\diag(\alpha,\beta,1)$ in $\PGL_{3}(K)$ is generated by the diagonal matrices and by a permutation matrix of order $3$, otherwise it is the group of diagonal matrices.
\end{lemma}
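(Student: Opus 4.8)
The plan is to compute the centralizer by lifting to $\GL_{3}$ and comparing eigenvalues. Fix the lift $\tilde D=\diag(\alpha,\beta,1)\in\GL_{3}(K)$. Since we are working in $\PGL_{3}$, an element $g$ centralizes $\diag(\alpha,\beta,1)$ if and only if some (equivalently any) lift $\tilde g\in\GL_{3}(K)$ satisfies $\tilde g\,\tilde D\,\tilde g^{-1}=\lambda\tilde D$ for a scalar $\lambda\in K^{\times}$. The first observation, which drives everything, is that because $\alpha\neq\beta$ and $\alpha,\beta\neq 1$ the three eigenvalues $\alpha,\beta,1$ of $\tilde D$ are pairwise distinct.

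Next I would compare the spectra of the two sides of $\tilde g\,\tilde D\,\tilde g^{-1}=\lambda\tilde D$. The left-hand side is conjugate to $\tilde D$, so its multiset of eigenvalues is $\{\alpha,\beta,1\}$, while the right-hand side has eigenvalues $\{\lambda\alpha,\lambda\beta,\lambda\}$; these multisets must coincide. Writing $d_{1}=\alpha$, $d_{2}=\beta$, $d_{3}=1$, distinctness forces a permutation $\sigma$ of $\{1,2,3\}$ with $d_{i}=\lambda d_{\sigma(i)}$ for all $i$, and taking the product over $i$ gives $\lambda^{3}=1$. Moreover, the same relation forces $\tilde g$ to send the $d_{i}$-eigenline of $\tilde D$ to its $d_{\sigma(i)}$-eigenline, so $\tilde g$ is a monomial (generalised permutation) matrix with underlying permutation $\sigma$. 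I would then split according to $\lambda$. If $\lambda=1$, distinctness of the eigenvalues forces $\sigma=\mathrm{id}$ and $\tilde g$ diagonal, giving exactly the diagonal subgroup. If $\lambda$ is a primitive cube root of unity, then $\sigma$ has no fixed point (a fixed point $\sigma(i)=i$ would give $\lambda=1$), hence $\sigma$ is a $3$-cycle; feeding this into the chain of equations $d_{i}=\lambda d_{\sigma(i)}$ and solving pins down $\{\alpha,\beta\}=\{\zeta_{3},\zeta_{3}^{2}\}$, and symmetrically for $\lambda=\zeta_{3}^{2}$. This shows that a non-diagonal centralizing element can exist only in the stated exceptional case.

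Finally, in the exceptional case $\{\alpha,\beta\}=\{\zeta_{3},\zeta_{3}^{2}\}$ I would exhibit the centralizer explicitly: the cyclic permutation matrix $P$ of order $3$ conjugates $\tilde D$ to $\zeta_{3}^{\pm1}\tilde D$, hence centralizes $\diag(\alpha,\beta,1)$ in $\PGL_{3}(K)$. Since any monomial matrix factors as a diagonal matrix times a permutation matrix, and a diagonal matrix automatically satisfies the required relation once $P$ does (diagonal matrices commute with $\tilde D$), the full centralizer is the set of products of diagonal matrices with powers of $P$, i.e. the group generated by the diagonal matrices together with $P$. The one delicate point — and the only place the argument genuinely uses that we are in $\PGL_{3}$ rather than $\GL_{3}$ — is the appearance of the nontrivial scalar $\lambda$, which is precisely what opens the door to the extra permutation symmetry in the $\{\zeta_{3},\zeta_{3}^{2}\}$ case; keeping careful track of $\lambda$ through the eigenvalue bookkeeping is where I expect the bulk of the care to be needed.
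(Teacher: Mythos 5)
Your proof is correct and follows essentially the same route as the paper: both arguments come down to the fact that a centralizing element must permute the three eigenlines of $\diag(\alpha,\beta,1)$ (equivalently, its fixed points $(1:0:0),(0:1:0),(0:0:1)$ in $\PP^{2}$), hence is a diagonal matrix times a permutation matrix, after which transpositions are ruled out and $3$-cycles are seen to occur exactly when $\{\alpha,\beta\}=\{\zeta_{3},\zeta_{3}^{2}\}$. The only difference is organizational: you lift to $\GL_{3}$ and case on the scalar $\lambda$ (deriving $\lambda^{3}=1$), whereas the paper stays in $\PP^{2}$ and cases on the permutation type, with the scalar left implicit.
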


\begin{proof}
	Assume that $g\in\PGL_{3}(K)$ is in the centralizer, then $g$ must stabilize the fixed locus of $\diag(\alpha,\beta,1)$, i.e. the three points $(0:0:1),(0:1:0),(1:0:0)$. Up to a diagonal matrix, we may then assume that $g$ is a permutation matrix. If $g$ fixes the three points, then it is the identity. If $g$ acts as a transposition, then it is immediate to check that $g$ does not commute with $\diag(\alpha,\beta,1)$ since $\alpha,\beta,1$ are pairwise different. If $g$ acts as a $3$-cycle, then $\diag(\alpha,\beta,1)=\diag(1,\alpha,\beta)$ which implies $\{\alpha,\beta\}=\{\zeta_{3},\zeta_{3}^{2}\}$.
\end{proof}

Write $H_{1}$ for the group isomorphic to $C_{3}^{2}$ generated by $\diag(\zeta_{3},\zeta_{3}^{2},1)$ and by a permutation matrix of order $3$. The group $H_{1}$ is not diagonalizable, since it has no fixed points.

\begin{corollary}\label{corollary:abelian}
	A finite, abelian subgroup of $\PGL_{3}(K)$ is either conjugate to $H_{1}$ or diagonalizable.
\end{corollary}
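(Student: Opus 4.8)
The plan is to control the failure of simultaneous diagonalizability by the commutator pairing of the central extension $1\to\gm\to\tilde{G}\to G\to 1$, where $\tilde{G}\subset\GL_{3}(K)$ is the preimage of $G$. For $g,h\in G$ choose lifts $\tilde{g},\tilde{h}\in\GL_{3}(K)$; since $G$ is abelian the commutator $\tilde{g}\tilde{h}\tilde{g}^{-1}\tilde{h}^{-1}$ is a scalar matrix $c(g,h)\cdot\id$, independent of the chosen lifts, and taking determinants gives $c(g,h)^{3}=1$. Thus $c\colon G\times G\to\mu_{3}$ is a well-defined alternating bilinear pairing, and the argument splits according to whether $c$ is trivial.

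If $c$ is trivial, then $\tilde{G}$ is abelian. Every element of $\tilde{G}$ is semisimple: a lift $\tilde{g}$ of an element $g$ of order $n$ satisfies $\tilde{g}^{n}\in\gm$, so its minimal polynomial divides a separable polynomial $X^{n}-\lambda$. A commuting family of semisimple matrices is simultaneously diagonalizable, so $G$ is diagonalizable and we are done. (Alternatively, whenever $G$ contains an element conjugate to $\diag(\alpha,\beta,1)$ with $\alpha,\beta,1$ pairwise distinct and $\{\alpha,\beta\}\neq\{\zeta_{3},\zeta_{3}^{2}\}$, one concludes immediately from Lemma~\ref{lemma:centralizer} that $G$ lies in the diagonal torus.)

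Now suppose $c$ is nontrivial, and pick $g_{0},h_{0}\in G$ with $c(g_{0},h_{0})=\zeta_{3}$ a primitive cube root of unity. Their lifts satisfy $\tilde{g}_{0}\tilde{h}_{0}=\zeta_{3}\,\tilde{h}_{0}\tilde{g}_{0}$, so $\langle\tilde{g}_{0},\tilde{h}_{0},\gm\rangle$ is a Heisenberg group whose $3$-dimensional representation on $K^{3}$ must be irreducible: otherwise $\tilde{g}_{0},\tilde{h}_{0}$ would be simultaneously triangularizable and hence commute modulo $\gm$, forcing $c(g_{0},h_{0})=1$. Since the Heisenberg group has a unique irreducible $3$-dimensional representation, $\langle g_{0},h_{0}\rangle$ is conjugate to $H_{1}$; after conjugating we may assume $H_{1}\subseteq G$.

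It then remains to show $G=H_{1}$, for which I will prove that the centralizer of $H_{1}$ in $\PGL_{3}(K)$ is $H_{1}$ itself; since $G$ is abelian and contains $H_{1}$, this forces $G=H_{1}$. Given $g$ centralizing $H_{1}$ with lift $\tilde{g}$, conjugation defines a character $\chi\colon H_{1}\to\mu_{3}$ with $\tilde{g}\tilde{h}\tilde{g}^{-1}=\chi(h)\tilde{h}$. If $\chi$ is trivial then $\tilde{g}$ commutes with the irreducible Heisenberg action, so $\tilde{g}$ is scalar and $g=1$ by Schur's lemma. In general, nondegeneracy of $c$ on $H_{1}\simeq C_{3}^{2}$ means $h'\mapsto c(h',-)$ identifies $H_{1}$ with $\hom(H_{1},\mu_{3})$, so some $h'\in H_{1}$ realizes the same character $\chi$; then $gh'^{-1}$ has trivial character, whence $gh'^{-1}=1$ and $g\in H_{1}$. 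The main obstacle is precisely the nontrivial-pairing case: one must verify that a nonzero commutator pairing genuinely produces an \emph{irreducibly} embedded Heisenberg group, and then pin down $G$ exactly through the self-centralizing property of $H_{1}$; the reduction to $\mu_{3}$-valued pairings and the diagonalizable case are routine.
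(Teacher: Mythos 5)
Your argument is a genuinely different route from the paper's. The paper obtains the corollary from the two centralizer lemmas by a case analysis on the eigenvalues of a single nontrivial element $g\in G$: a repeated eigenvalue puts $G$ inside $\GL_{2}$ (where commuting semisimple elements are simultaneously diagonalizable), three distinct eigenvalues not of the form $\{\lambda,\zeta_{3}\lambda,\zeta_{3}^{2}\lambda\}$ force $G$ into the diagonal torus by Lemma~\ref{lemma:centralizer}, and the remaining case is pinned down to $H_{1}$ by hand. You instead package the obstruction to diagonalizability into the $\mu_{3}$-valued commutator pairing $c$ on $G$ and use the Stone--von Neumann uniqueness of the irreducible representation of the Heisenberg group, plus self-centralizingness of $H_{1}$. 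Your approach is cleaner conceptually, generalizes to $\PGL_{n}$ (nondiagonalizable finite abelian subgroups come from Heisenberg groups attached to nondegenerate alternating pairings), and makes the appearance of $H_{1}$ less of an accident; the paper's approach is more elementary and reuses lemmas it needs anyway.

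There is one step you assert without justification, and it is the one place where the argument could silently go wrong: you call $\langle\tilde{g}_{0},\tilde{h}_{0},\gm\rangle$ ``a Heisenberg group'' and invoke uniqueness of its $3$-dimensional irreducible representation to conclude $\langle g_{0},h_{0}\rangle$ is conjugate to $H_{1}$. For that you need $\langle g_{0},h_{0}\rangle$ to have order $9$, i.e.\ $g_{0}$ and $h_{0}$ to have order $3$ in $\PGL_{3}(K)$ --- a priori they are just elements of $G$ with $c(g_{0},h_{0})=\zeta_{3}$ and could have large order, in which case $\langle\tilde{g}_{0},\tilde{h}_{0}\rangle$ would not be the extraspecial group of order $27$. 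The fix is short: the relation $\tilde{h}_{0}\tilde{g}_{0}\tilde{h}_{0}^{-1}=\zeta_{3}^{-1}\tilde{g}_{0}$ shows the eigenvalue multiset of the (semisimple) matrix $\tilde{g}_{0}$ is stable under multiplication by $\zeta_{3}$, hence equals $\{\lambda,\zeta_{3}\lambda,\zeta_{3}^{2}\lambda\}$, so $\tilde{g}_{0}^{3}$ is scalar and $g_{0}^{3}=1$; likewise for $h_{0}$. With that in place the group generated by determinant-one lifts really is the Heisenberg group of order $27$ and exponent $3$, and the rest of your proof (irreducibility via the determinant of $\zeta_{3}\operatorname{Id}$ on a putative invariant subspace of dimension $1$ or $2$, and the computation that the centralizer of $H_{1}$ equals $H_{1}$ via the nondegeneracy of $c$ on $H_{1}$ and Schur's lemma) is correct. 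A cosmetic point: ``simultaneously triangularizable hence commute modulo $\gm$'' is not literally true; what you want is that the commutator of two simultaneously triangularizable matrices is unipotent, whereas $\zeta_{3}\operatorname{Id}$ is not.
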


\subsection{The Hessian groups}\label{sect:hessian}

Consider the six matrices
\[ M_{0}=\left( \begin{array}{ccc}
1 & 0 &	0 \\
0 & \zeta_{3} & 0 \\
0 & 0 & \zeta_{3}^{2} \\
\end{array} \right),~
M_{1}=\left( \begin{array}{ccc}
0 & 0 & 1 \\
1 & 0 & 0 \\
0 & 1 & 0
\end{array} \right),~
M_{2}=\left( \begin{array}{ccc}
1 & 0 & 0 \\
0 & 0 & 1 \\
0 & 1 & 0
\end{array} \right),
\]

\[
M_{3}=\left( \begin{array}{ccc}
1 & 1 & 1 \\
1 & \zeta_{3} & \zeta_{3}^{2} \\
1 & \zeta_{3}^{2} & \zeta_{3}
\end{array} \right),{}~
M_{4}=\left( \begin{array}{ccc}
1 & 1 & \zeta_{3} \\
1 & \zeta_{3} & 1 \\
\zeta_{3}^{2} & \zeta_{3} & \zeta_{3}
\end{array} \right),
M_{5}=\left( \begin{array}{ccc}
1 & 0 &	0 \\
0 & 1 & 0 \\
0 & 0 & \zeta_{3} \\
\end{array} \right).\]

For $i=1,\dots,5$, let $H_{i}\subset\PGL_{3}(\QQ(\zeta_{3}))$ be the group generated by $M_{0},\dots, M_{i}$ for $i=1,\dots,5$. We call the subgroups $H_{1},\dots,H_{5}$ the \emph{Hessian groups} \cite[Chapter XII]{mbd} of degrees $9,18,36,72,216$ respectively. In the literature, only $H_{5}$ is consistently called Hessian, while only some authors call Hessian the others. Notice that our matrices differ by a scalar from the ones given in \cite[Chapter XII]{mbd}: since we work in $\PGL_{3}(K)$, as opposed to $\GL_{3}(K)$, we can forget about some scalars and simplify everything.

The group $H_{3}$ is not normal in $H_{5}$, but in all the other cases $H_{i}$ is normal in $H_{j}$ for $j>i$. We have isomorphisms
\[H_{1}\simeq C_{3}^{2},~H_{2}\simeq (C_{3}^{2})\rtimes C_{2},~H_{3}\simeq (C_{3}^{2})\rtimes C_{4},\]
where the action of $C_{4}=\left<M_{3}\right>$ on $C_{3}^{2}$ is given by the matrix $\begin{psmallmatrix}  & -1  \\ 1 &  \end{psmallmatrix}$ in $\SL(2,3)$, and $C_{2}\subset C_{4}$ acts as $-1$. Furthermore, we have isomorphisms
\[H_{5}/H_{1}\simeq\SL(2,3),~H_{4}/H_{1}\simeq Q_{8}\subset \SL(2,3),\]
\[H_{5}/H_{2}\simeq\PSL(2,3)\simeq A_{4},~H_{4}/H_{2}=\operatorname{Kl}\subset A_{4}\]
where $Q_{8}$ is the quaternion group and $\operatorname{Kl}\subset A_{4}$ is the Klein group. 

The fixed subset of any non-trivial cyclic subgroup of $H_{1}\simeq C_{3}^{2}$ consists of three non-collinear points; these four triangles are pairwise disjoint. Any line connecting two points of two different triangles contains exactly one point of each triangle: in particular, the union of one triangle and one point of another triangle is in general position.

Since these four triangles correspond to the cyclic subgroups of $H_{1}\simeq\FF_{3}^{2}$, it is natural to identify them with the four points of $\PP(\FF_{3}^{2})$. The group $\SL(2,3)$ is a non-split central extension of $\PSL(2,3)\simeq A_{4}$ by $C_{2}=\left<-1\right>$, and the induced action of $\PSL(2,3)\simeq A_{4}$ on $\PP(\FF_{3}^{2})$ is the standard one of $A_{4}$ on four points.

\begin{lemma}\label{lemma:H1norm}
	The normalizer of $H_{1}\subset\PGL_{3}(K)$ is $H_{5}$.
\end{lemma}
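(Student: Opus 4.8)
Since $H_{1}$ is normal in $H_{5}$, we have $H_{5}\subseteq N(H_{1})$, so it suffices to prove the reverse inclusion; as $H_{5}$ is finite, it is enough to show $\abs{N(H_{1})}\le\abs{H_{5}}=216$. The plan is to study the conjugation homomorphism
\[ c\colon N(H_{1})\longrightarrow\aut(H_{1})\cong\GL_{2}(\FF_{3}),\]
using the identification $H_{1}\simeq C_{3}^{2}=\FF_{3}^{2}$, and to control its kernel and image separately. I claim that $\ker c=C(H_{1})=H_{1}$ (order $9$) and that $\im c\subseteq\SL_{2}(\FF_{3})=\SL(2,3)$ (order $24$). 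Granting both, the exact sequence $1\to C(H_{1})\to N(H_{1})\to\im c\to 1$ yields $\abs{N(H_{1})}\le 9\cdot 24=216=\abs{H_{5}}$, which together with $H_{5}\subseteq N(H_{1})$ forces $N(H_{1})=H_{5}$.

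For the kernel, $C(H_{1})=C(M_{0})\cap C(M_{1})$. Since $M_{0}=\diag(\zeta_{3},\zeta_{3}^{2},1)$, Lemma~\ref{lemma:centralizer} identifies $C(M_{0})$ with the group generated by the diagonal torus $T$ and the order-$3$ permutation $M_{1}$. Intersecting with $C(M_{1})$ is then a short computation: a diagonal matrix $\diag(d_{1},d_{2},d_{3})$ commutes with the cyclic permutation $M_{1}$ in $\PGL_{3}$ exactly when $d_{2}/d_{1}=d_{3}/d_{2}=d_{1}/d_{3}$, i.e.\ when the entries form a geometric progression with ratio a cube root of unity, which means precisely that it lies in $\langle M_{0}\rangle$. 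Hence $C(H_{1})=\langle M_{0},M_{1}\rangle=H_{1}$. More conceptually, the preimage $\widehat{H_{1}}\subset\GL_{3}(K)$ generated by lifts $\widehat{M_{0}},\widehat{M_{1}}$ and scalars is a Heisenberg group satisfying $\widehat{M_{0}}\widehat{M_{1}}\widehat{M_{0}}^{-1}\widehat{M_{1}}^{-1}=\zeta_{3}I$; this representation is irreducible, so by Schur's lemma its centralizer in $\GL_{3}(K)$ is the scalars, and adjusting a centralizing lift by a suitable element of $\widehat{H_{1}}$ recovers $C(H_{1})=H_{1}$.

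The containment $\im c\subseteq\SL_{2}(\FF_{3})$ is the crux. The commutator on $\widehat{H_{1}}$ defines a nondegenerate alternating form $\langle\cdot,\cdot\rangle\colon\FF_{3}^{2}\times\FF_{3}^{2}\to\FF_{3}$ by $[\widehat{x},\widehat{y}]=\zeta_{3}^{\langle x,y\rangle}I$ for lifts of $x,y\in H_{1}$; this is the determinant form on $\FF_{3}^{2}$. The key point is that the commutator value $\zeta_{3}^{\langle x,y\rangle}I$ is a genuine scalar matrix, hence central in $\GL_{3}(K)$ and fixed by every conjugation. Thus, for $g\in N(H_{1})$ with lift $\widehat{g}$, conjugating the identity $[\widehat{x},\widehat{y}]=\zeta_{3}^{\langle x,y\rangle}I$ by $\widehat{g}$ gives $\zeta_{3}^{\langle c(g)x,\,c(g)y\rangle}I=\zeta_{3}^{\langle x,y\rangle}I$, so $c(g)$ preserves $\langle\cdot,\cdot\rangle$ and therefore $\det c(g)=1$. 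Hence $\im c\subseteq\SL_{2}(\FF_{3})$, completing the bound. (One could also verify $\im c=\SL_{2}(\FF_{3})$ directly, as $c|_{H_{5}}$ already surjects onto $H_{5}/H_{1}\simeq\SL(2,3)$, but this is not needed for the equality.)

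The main obstacle is exactly the $\SL_{2}(\FF_{3})$-containment. A naive symplectic-similitude argument only gives $\det c(g)=\pm 1$, which in $\FF_{3}$ is no constraint at all and would wrongly permit $\abs{N(H_{1})}=432$. What rescues the bound is that in $\PGL_{3}$ the Heisenberg center becomes the scalar matrices of $\GL_{3}$, which are fixed under all conjugation, forcing the multiplier to be $+1$ rather than merely $\pm 1$. Everything else---the identification of $C(M_{0})$, the Heisenberg relations, and the order counts $\abs{\GL_{2}(\FF_{3})}=48$, $\abs{\SL_{2}(\FF_{3})}=24$---is routine.
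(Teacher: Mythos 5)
Your proof is correct, and it takes a genuinely different route from the paper's. The paper argues with projective geometry: an element $g$ normalizing $H_{1}$ permutes the four triangles of fixed points of the cyclic subgroups of $H_{1}$; since $H_{5}$ acts on these four triangles as $A_{4}$, one may multiply $g$ by elements of $H_{5}$ (and then of $H_{2}$) until it fixes the coordinate points $(1:0:0)$, $(0:1:0)$, $(0:0:1)$ and stabilizes the fixed triangle of $M_{1}$, and a count of elements of $\PGL_{3}(K)$ fixing four points in general position shows $g\in\left<M_{0}\right>\subset H_{5}$. You instead analyze the conjugation homomorphism $c\colon N(H_{1})\to\aut(H_{1})\simeq\GL_{2}(\FF_{3})$: the kernel is the centralizer, which you correctly compute to be $H_{1}$ itself (either directly via Lemma~\ref{lemma:centralizer} or by Schur's lemma applied to the irreducible Heisenberg lift in $\GL_{3}(K)$), and the image lands in $\SL(2,3)$ because the commutator pairing on the Heisenberg group takes values in genuine scalar matrices, which are conjugation-invariant, so $c(g)$ preserves (not merely scales) the symplectic form; the order count $9\cdot 24=216=\abs{H_{5}}$ then forces $N(H_{1})=H_{5}$. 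Your diagnosis that the $\SL(2,3)$-containment is the crux is accurate: a similitude argument would only bound the order by $9\cdot 48=432$, and it is exactly the scalar-valued Weil-type pairing that rules this out. As for what each approach buys: the paper's proof is elementary and self-contained, resting only on the general-position fact in $\PGL_{3}$; yours is structurally more informative, exhibiting $N(H_{1})$ as an extension $1\to H_{1}\to N(H_{1})\to\SL(2,3)\to 1$ and explaining conceptually why the quotient is $\SL(2,3)$ rather than all of $\GL_{2}(\FF_{3})$ --- a fact the paper records only implicitly through the isomorphism $H_{5}/H_{1}\simeq\SL(2,3)$.
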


\begin{proof}
	Let $g\in\PGL_{3}(K)$ be an element normalizing $H_{1}$, in particular $g$ acts on the set of four triangles $\PP^{2}(\FF_{3}^{2})$. Since $H_{5}$ acts as $A_{4}$ on $\PP^{2}(\FF_{3}^{2})$, up to mutliplying $g$ by an element of $H_{5}$ we may assume that $g$ acts trivially on $\PP^{2}(\FF_{3}^{2})$, in particular it stabilizes the two triangles of fixed points of $M_{0}$ and $M_{1}$. Furthermore, up to multiplying by an element of $H_{2}$ we may assume that the points $(0:0:1)$, $(0:1:0)$, $(1:0:0)$ are fixed by $g$, hence $g$ is diagonal. Since an element of $\PGL_{3}(K)$ fixing four points in general position is trivial, there are at most three elements of $\PGL_{3}(K)$ fixing $(0:0:1)$, $(0:1:0)$, $(1:0:0)$ and permuting the fixed locus of $M_{1}$. The three powers of $M_{0}$ do this, so $g$ is one of them.	
\end{proof}

\subsection{The Hessian group schemes}\label{sect:hessch}

Denote by $\bar{M}_{i}$ the Galois conjugate of $M_{i}$ with respect to the only non-trivial automorphisms of $\QQ(\zeta_{3})/\QQ$, we have identities in $\PGL_{3}(\QQ(\zeta_{3}))$
\[\bar{M}_{0}=M_{0}^{-1},~\bar{M}_{1}=M_{1},~\bar{M}_{2}=M_{2},\]
\[\bar{M}_{3}=M_{3}^{-1},~ \bar{M}_{4}=M_{4}\cdot M_{3},~\bar{M}_{5}=M_{5}^{-1}.\]
In particular, the Galois action stabilizes each $H_{i}$, hence for every $i=1,\dots,5$ we get a finite subgroup scheme $\fH_{i}\subset\PGL_{3,\QQ}$ with $\fH_{i}(\QQ(\zeta_{3}))=H_{i}$. Write 
\[\fC_{4}=\fH_{3}/\fH_{1},\fQ_{8}=\fH_{4}/\fH_{1},\fK=\fH_{4}/\fH_{2},\]
they are twisted forms over $\QQ$ of $C_{4},Q_{8},\operatorname{Kl}$ respectively and there is a short exact sequence
\[1\to \fC_{4}\to \fQ_{8}\to \fK\to 1.\]

Write $\{\pm 1,\pm i,\pm j,\pm k\}$ for the elements of $Q_{8}$, we may assume that $M_{3},M_{4}$ map respectively to $i,j$ in $Q_{8}=H_{4}/H_{1}$ and that $C_{4}=\left<i\right>\subset Q_{8}$. Using the characterizations of $\bar{M}_{i}$ written above, the non-trivial element of $\gal(\QQ(\zeta_{3})/\QQ)$ acts on $C_{4}\subset Q_{8}$ as $i\mapsto -i$, $j\mapsto -k$, $k\mapsto -j$ and on $\operatorname{Kl}=C_{2}^{2}$ as $(1,0)\mapsto (0,1)$, $(0,1)\mapsto (1,0)$.

There is an induced action of $\fQ_{8}$ on $\fH_{1}=\mu_{3}\times C_{3}$, and we have isomorphisms
\[\fH_{1}\simeq \mu_{3}\times C_{3},~\fH_{2}\simeq (\mu_{3}\times C_{3})\rtimes C_{2},\]
\[\fH_{3}\simeq (\mu_{3}\times C_{3})\rtimes \fC_{4},~\fH_{4}/\fH_{1}\simeq \fQ_{8},~\fH_{4}/\fH_{2}=\fK.\]

\subsection{The list}

Here is the list of all finite subgroups of $\PGL_{3}(K)$ up to conjugation \cite[Chapter XII]{mbd}.

\begin{description}
	\item[(A)] Finite subgroups of $\GL_{2}(K)\subset\PGL_{3}(K)$.
	\item[(B)] A group generated by a non-trivial finite diagonal subgroup and by a permutation matrix of order $3$. A group of this type is either conjugate to $H_{1}$ or it has exactly one invariant triangle.
	\item[(C)] A group generated by a group of type {\bf (B)} and a matrix of the form $\begin{psmallmatrix}  & \beta &  \\ \alpha &  &  \\  &  & 1 \end{psmallmatrix}$. A group of this type is either conjugate to $H_{2}$ or it has exactly one invariant triangle.
	\item[(D)] The groups $H_{3}\subset H_{4}\subset H_{5}$. 
	\item[(E)] The simple groups $A_{5}$, $A_{6}$ and $\PSL(2,7)$.
\end{description}

\section{On the automorphism groups of structures on $\PP^{2}$}

Given a variety $X$ over $k$ and a subgroup $G\subset \aut_{K}(X)$, a $G$-structure is an algebraic structure $\xi$ on $X$ in the sense of \cite[\S 5]{giulio-angelo-moduli} such that $\aut_{K}(X,\xi)\subset \aut_{K}(X)$ is conjugate to $G$.

Given a finite subgroup $G\subset\PGL_{2}(K)$, if there exists a $G$-structure $\xi$ on $\PP^{1}$ which is not defined over its field of moduli then $G$ is cyclic of even order, see \cite[Theorem 5]{giulio-divisor} (while the result is stated for effective, reduced divisors, the proof works without modifications for any structure on $\PP^{1}$).

Similarly, if $G\subset\PGL_{3}(K)$ is finite, the existence of a $G$-structure on $\PP^{2}$ not defined over its field of moduli puts strong constraints on $G$. As we are going to prove, such a $G$-structure only exists if $G$ is critical.

\begin{theorem}\label{theorem:critical}
	Let $G\subset\PGL_{3}(K)$ be a finite subgroup and $\xi$ a $G$-structure over $\PP^{2}_{K}$ with field of moduli $k$. If $G$ is not critical, then $\xi$ descends to a structure over some Brauer-Severi surface over $k$. If $G$ is lucky, $\xi$ descends to $\PP^{2}_{k}$.
	
	On the other hand, if $G\subset\PGL_{3}(\bar{\QQ})$ is critical there exists a field $k$ of characteristic $0$ with algebraic closure $K$ and a $G$-structure $\xi$ on $\PP^{2}_{K}$ not defined over its field of moduli. If $G$ is not conjugate to $H_{3}$, we may choose $k$ so that it contains $\CC$.
\end{theorem}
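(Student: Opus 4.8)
The plan is to prove the two assertions separately, in both cases combining the two strategies of Section~\ref{sect:strategies} with the classification of finite subgroups of $\PGL_3(K)$ recalled in Section~\ref{sect:subgroups}. Throughout, write $\cG_\xi$ for the residual gerbe, $\cP_\xi\to\cG_\xi$ for its universal projective bundle, and $\bP_\xi$ for the compression, so that $\xi$ descends to a Brauer--Severi surface iff $\cG_\xi(k)\neq\emptyset$ and to $\PP^2_k$ iff $\cP_\xi(k)\neq\emptyset$.

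For the positive assertions I would run Strategy~1. For each group $G$ in the list \textbf{(A)}--\textbf{(E)}, I first exhibit a distinguished closed subset $Z\subset\PP^2_K$ on which $G$ acts transitively: the fixed point $(0:0:1)$ for groups inside $\GL_2$, the vertices of the unique invariant triangle for the groups of type \textbf{(B)} and \textbf{(C)} not conjugate to $H_1,H_2$, and orbits built from the four triangles of $H_1$ for the Hessian and exceptional cases. By \cite[Lemma~18]{giulio-fmod} the image $Z/G$ is a rational point $p\in\bP_\xi(k)$. I then reduce liftability of $p$ to a local computation at a point $z\in Z$: the singularity of $\bP_{\xi,K}=\PP^2_K/G$ at $p$ agrees with that of $T_z\PP^2/G_z$ at the origin, so it suffices to check that the stabilizer $G_z$ acts with a quotient singularity of type $\rR$, which I read off from the classification of $\rR_2$ groups in \cite{giulio-tqs2}. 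When this succeeds, the Lang--Nishimura theorem for tame stacks \cite[Theorem~4.1]{giulio-angelo-valuative} gives $\cP_\xi(k)\neq\emptyset$, i.e.\ descent to $\PP^2_k$; these are exactly the lucky groups. For the groups that are non-critical but not lucky (namely $H_1$ and the abelian groups with $d^2-d+1\equiv0\pmod n$ but $3\nmid an$, together with the trivial group), the compression carries no liftable point and the quotient singularity fails to be of type $\rR$; here I instead neutralize $\cG_\xi$ directly, showing that the obstruction to $\cG_\xi(k)\neq\emptyset$---a class governed by the abelian band of $\cG_\xi$---vanishes, so that $\xi$ still descends to a Brauer--Severi surface.

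For the negative assertion I would run Strategy~2. Given a critical $G\subset\PGL_3(\bar\QQ)$, I descend it to a group scheme $\fG\subset\PGL_{3,k}$ over a suitable $k$ and produce a subgroup $\fN$ normalizing $\fG$ for which $\H^1(k,\fN)\to\H^1(k,\fN/\fG)$ is not surjective; \cite[Theorem~4]{giulio-fmod} then yields a $G$-structure with field of moduli $k$ not defined over $k$, which by \cite[Theorem~3]{giulio-fmod} may be taken to be a $0$-cycle. For the two abelian families I take $\fN$ to be the normalizer of $\fG$, built from the diagonal torus and a permutation matrix, and exploit the congruence conditions ($3\mid an$, respectively the $2^b$-part with $n$ odd) to obstruct surjectivity over a field such as $\CC(\!(s)\!)(\!(t)\!)$, which contains $\CC$. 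For the Hessian groups $H_2$ and $H_3$ I use the Hessian group schemes of Section~\ref{sect:hessch}: taking $\fN=\fH_4$ I analyze $\H^1(k,\fH_4)\to\H^1(k,\fH_4/\fH_2)=\H^1(k,\fK)$, respectively $\H^1(k,\fH_4)\to\H^1(k,\fH_4/\fH_3)$, where the twisted forms $\fC_4,\fQ_8,\fK$ and their Galois action enter. The case $H_3$ is special: the twisting of $\fC_4$ is trivialized exactly when $\zeta_{12}\in k$, so the obstruction survives only for $\zeta_{12}\not\in k$, which forces $k\not\supset\CC$; for every other critical group the obstruction can be realized over a field containing $\CC$.

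The main obstacle is the liftability bookkeeping in the positive direction: deciding, for each stabilizer $G_z$ and each faithful two-dimensional representation, whether the cyclic or abelian quotient singularity is of type $\rR$. It is precisely this analysis---via the $\rR_2$ classification of \cite{giulio-tqs2}---that produces the arithmetic conditions $d^2-d+1\equiv0\pmod n$ and $d^2\equiv1\pmod n$ appearing in the definition of critical, and isolating exactly the configurations in which \emph{every} distinguished subset yields a non-$\rR$ singularity is the delicate point. On the arithmetic side, the matching obstacle is the explicit computation of the non-abelian cohomology for the Hessian schemes, where one must track the Galois action on $\fQ_8$ and $\fK$ and the role of $\zeta_{12}$ for $H_3$.
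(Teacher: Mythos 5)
Your high-level skeleton (distinguished transitive subsets plus $\rR$-type checks for the positive half, \cite[Theorem 4]{giulio-fmod} with explicit overgroups for the negative half) is indeed the paper's, but your recipe breaks down at exactly the two places where the paper is forced to abandon it, and these are genuine gaps. First, for non-abelian $G\subset\GL_2(K)$ (type \textbf{(A)}): the fixed point $(0:0:1)$ is distinguished, but liftability of the resulting rational point of $\bP_\xi$ would require the singularity $T_{(0:0:1)}\PP^2/G$ to be of type $\rR$, where $G$ acts through its full faithful two-dimensional representation. You give no argument for this, and there is no reason it should hold: already $\langle -\operatorname{Id}\rangle=C_2\subset\GL_2$ gives the non-$\rR$ singularity $\frac12(1,1)$ (this is why $C_2$ is critical), and the $\rR_2$ certificates available (odd dihedral, odd cyclic, cyclic with $d^2\not\equiv 1$, and certain extensions) do not cover non-abelian subgroups of $\GL_2$ containing $-\operatorname{Id}$ --- indeed the paper's own treatment of $A_5$, $A_6$, $\PSL(2,7)$ works hard to rule out precisely such stabilizers. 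The paper instead uses the unique $G$-invariant line $L$: $L/G$ descends to a genus-$0$ curve $C\subset\bP_\xi$, a regular rational point of $\bP_\xi$ on $C$ is liftable with no $\rR$-analysis at all, and if $C$ were a nontrivial Brauer--Severi curve, rigidification by the center plus \cite[Proposition 2]{giulio-divisor} would force $G$ modulo its center to be cyclic, contradicting non-abelianness. Second, $H_3$ with $\zeta_{12}\in k$ is lucky, so you must produce a point of $\cP_\xi(k)$; your triangle recipe cannot do this. Since $H_3/H_1\simeq C_4$ acts on the four triangles through a double transposition, the $12$ points split into two $H_3$-orbits of $6$, and these are swapped by the normalizer $H_4$ of $H_3$: neither orbit is distinguished, the distinguished union is not a transitive orbit, and no corestriction argument is available because $H_3$ is non-abelian. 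The paper's proof here is entirely cohomological: $\zeta_{12}\in k$ allows lifting the generators to $\SL_3(k)$, whence $\H^1(k,H_4')\to\H^1(k,H_4/H_3)$ is surjective with image killed in $\H^1(k,\PGL_3)$, and \cite[Theorem 16]{giulio-fmod} applies. Your proposal never addresses the positive statement for this group, although it is exactly where the hypothesis $\zeta_{12}\in k$ in the definition of critical/lucky is used.

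Two further points. For the non-critical, non-lucky groups ($H_1$, and $C_a\times C_{an}$ with $d^2-d+1\equiv 0\pmod n$, $3\nmid an$) you assert that ``the quotient singularity fails to be of type $\rR$'' and that one must therefore neutralize $\cG_\xi$ directly; in fact the opposite holds and is essential to the paper's argument. Those singularities \emph{are} of type $\rR$ (cyclic of odd order); the obstruction is that the distinguished set descends to a closed $0$-cycle of degree $3$ (resp.\ $4$) which may have no rational point, and the paper applies Lang--Nishimura over the residue field $k'$ of a closed point and then kills the gerbe class by corestriction, using that the band is $an$-torsion (resp.\ $3$-torsion) while $[k':k]$ is prime to it. Relatedly, $d^2-d+1\equiv 0\pmod n$ does not come from the $\rR_2$ classification: it is the condition for the three tangent-space representations to be equivalent, i.e.\ for the normalizer to permute the three fixed points so that none is individually distinguished; criticality additionally requires $3\mid an$, which is what makes the corestriction trick fail. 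Finally, in the negative direction, taking $\fN$ to be the full normalizer of $\fG$ is risky and is not what the paper does: for diagonal $G$ the normalizer contains the whole torus, and enlarging $\fN$ can restore surjectivity of $\H^1(k,\fN)\to\H^1(k,\fN/\fG)$. The counterexamples hinge on hand-picked \emph{finite} overgroups --- a non-split $3$-torsion extension $E$ of $C_3^2$, overgroups admitting no abelian subgroup surjecting onto $N/G\simeq C_3^2$ or $C_2^2$, $H_4$ with $H_4/H_1\simeq Q_8$ for $H_2$, and the computation that $\H^1(\RR,\fK)$ is trivial for $H_3$ --- so this part of your plan needs to be replaced by those explicit constructions.
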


The first part of Theorem~\ref{theorem:cycles} is a direct consequence of Theorem~\ref{theorem:critical}, while the second part follows from Theorem~\ref{theorem:critical} using \cite[Theorem 3]{giulio-fmod}.

We spend the rest of this section proving Theorem~\ref{theorem:critical}. We apply the two strategies described in \S\ref{sect:strategies}, or small variations of them, to each finite subgroup of $\PGL_{3}(K)$. 

While Theorem~\ref{theorem:critical} partially overlaps with a theorem of B. Huggins \cite[Theorem 6.4.8]{hugginsphd}, we do not use her result in our proof for two reasons. The first is that we work with arbitrary algebraic structures, whereas she only works with smooth plane curves. The second reason is that, in the large majority of cases, we prove that the curve (or structure) descends to a curve embedded in $\PP^{2}$ over the field of moduli, whereas she only proves that the abstract curve descends to the field of moduli. 

In the first half, we prove that if $G$ is lucky (resp. not critical) then $\xi$ descends to a structure on $\PP^{2}_{k}$ (resp. on some Brauer-Severi variety) by finding a rational point of $\bP_{\xi}$ whose corresponding singularity in $X/G$ is of type $\rR$ (resp. $\cG_{\xi}(k)\neq\emptyset$), this gives us a rational point of $\cP_{\xi}$ by the Lang--Nishimura theorem for tame stacks. 

In the second half, we construct the various counterexamples for $G$ critical.

\subsection{Type (A), not abelian}

Assume that $G\subset\GL_{2}(K)\subset\PGL_{3}(K)$ is of type {\bf (A)} and not abelian. Let $Z\subset G$ be the center and write $\cG\to\bar{\cG}$ for the rigidification of $\cG$ modulo the center of the inertia, see \cite[Appendix C]{dan-tom-angelo2008}. Essentially, we can pass to the quotient $G/Z$ at the level of gerbes; we have a natural identification $\bar{\cG}=\cB_{K}(G/Z)$.

Since $G\subset \GL_{2}(K)$, there is at least one line $L$ stabilized by $G$. If there is another one $L'$, then $p=L\cap L'$ is fixed and $G$ acts faithfully and diagonally on the tangent space of $p$, which is absurd since $G$ is not abelian. It follows that $L$ is the unique line stabilized by the whole $G$, it is a distinguished subspace and $L/G\subset\PP^{2}/G$ descends to a genus $0$ curve $C\subset \bP_{\xi}$. If $C$ is birational to $\PP^{1}$, since $\bP_{\xi}$ is normal we may find a rational point $c\in C(k)$ which is regular in $\bP_{\xi}$. Assume by contradiction that $C$ is a non-trivial Brauer-Severi curve over $k$.

Since $\bP_{\xi}$ is normal, by \cite[Corollary 3.2]{giulio-angelo-valuative} there is a rational map $C\dashrightarrow\bar{\cG}$. Let $\Delta\subset G\subset\GL_{2}(K)$ be the subgroup of diagonal matrices, observe that $\Delta\subset Z$ and that by construction the base change of $C\dashrightarrow\bar{\cG}$ to $K$ is the composition $L/(G/\Delta)\dashrightarrow \cB_{K}(G/\Delta)\to \cB_{K}(G/Z)$. In particular, the geometric fibers of $C\dashrightarrow\bar{\cG}$ are birational to $L/(Z/\Delta)$ and hence of genus $0$. This allows us to apply \cite[Proposition 2]{giulio-divisor}, which implies that $\bar{G}$ is cyclic. The fact that the quotient $\bar{G}$ of $G$ by its center is cyclic implies that $G$ is abelian, which is absurd. 

\subsection{Abelian, not conjugate to $H_{1}$}

If $G$ is abelian but not conjugate to $H_{1}$, it is diagonal by Lemma~\ref{corollary:abelian}, hence it is isomorphic to $C_{a}\times C_{an}$ for some positive integers $a,n$ and, up to conjugation, it is generated by three diagonal matrices of the form $\diag(\zeta_{a},1,1)$, $\diag(1,\zeta_{a},1)$, $\diag(\zeta_{an}^{b},\zeta_{an}^{d},1)$ for some integers $0\le b,d< an$ such that $\gcd(an,b,d)=1$.

If $a=1$, $b=d$ and $n$ is even, then $G$ is critical. If $a=1$, $b=d$ and $n\ge 3$ is odd, then the point $(0:0:1)$ is distinguished, it descends to a rational point of $\bP_{\xi}$ and the corresponding singularity is of type $\rR$ by \cite[Theorem 4]{giulio-tqs2}. If $a=n=1$, then $G=\{\id\}$ is not critical, and clearly $\cG_{\xi}=\spec k$ has a rational point.

Otherwise, the fixed locus of $G$ consists of the three points $(1:0:0)$, $(0:1:0)$ and $(0:0:1)$. Denote by $\rho_{1},\rho_{2},\rho_{3}:G\to\GL_{2}(K)$ the three corresponding faithful representations on the tangent spaces, each one splits uniquely as a sum of two characters. We say that two representations $\rho,\rho'$ of $G$ are equivalent if there exists an automorphism $\phi$ of $G$ such that $\rho$ and $\rho\circ\phi$ are isomorphic representations. There are three cases: the three representations $\rho_{1},\rho_{2},\rho_{3}$ are equivalent, only two of them are equivalent or they are pairwise non-equivalent.

\subsubsection{Three equivalent representations}\label{sect:3eq}

If the three representations are equivalent, then the subgroup of $C_{n}^{2}$ generated by $(b,d)$ is equal to the one generated by $(-d,b-d)$. In particular, $\gcd(b,n)=\gcd(d,n)=1$ since $\gcd(b,d,n)=1$. Up to multiplying by the inverse of $b$ modulo $n$, we may thus assume $b\cong 1\pmod{n}$. Furthermore, since $\diag(\zeta_{a},1,1)\in G$, we may reduce to the case $b=1$.

The subgroups of $C_{n}^{2}$ generated by $(1,d)$ and $(-d,1-d)$ are equal if and only if the determinant $d^{2}-d+1$ is congruent to $0$ modulo $n$. It follows that $G$ is not lucky, and it is critical if and only if $3\mid an$. If $3\mid an$, in the second half of the proof we will construct an example in which $\xi$ does not descend to any Brauer-Severi surface over $k$. Right now we are interested in the positive result, i.e. if $3\nmid an$ then $\xi$ descends to some Brauer-Severi surface, or equivalently $\cG_{\xi}(k)\neq\emptyset$.

Since $d^{2}-d+1\cong 1\pmod{n}$, then $n$ is odd (there is no such $d$ modulo $2$). The singularity of $\PP^{2}/G$ in the images of each of the three fixed points is cyclic of type $\frac1n(1,d)$, and it is of type $\rR$ since $n$ is odd \cite[Theorem 4]{giulio-tqs2}. Let $F\subset\PP^{2}$ be the fixed locus of $G$, we have that $F/G$ descends to a reduced, effective $0$-cycle $Z$ of degree $3$ on $\bP_{\xi}$, and the singularities in the geometric points of $Z$ are of type $\rR$.

If $\cG_{\xi}(k)=\emptyset$, since the singularities of $Z$ are of type $\rR$ then $Z(k)=\emptyset$ by the Lang--Nishimura theorem for tame stacks, hence $Z$ has only one point with residue field $k'/k$ of degree $3$. It follows that $\cG_{\xi}(k')\neq\emptyset$. Let $A$ be the band of $\cG_{\xi}$, it is a finite, $an$-torsion étale abelian group scheme over $k$. The non-neutral gerbe $\cG_{\xi}$ corresponds to a non-zero cohomology class $\psi\in\H^{2}(k,A)$ satisfying $3\psi=\operatorname{cor}_{k'/k}(\psi_{k'})=\operatorname{cor}_{k'/k}(0)=0\in\H^{2}(k,A)$. Since $A$ is $an$-torsion and $3\nmid an$, this implies $\psi=0$, which is absurd.

\subsubsection{Two equivalent representations}\label{sect:2eq}

If only two of the representations are equivalent, up to conjugation the ones corresponding to points $(1:0:0)$ and $(0:1:0)$, then we get the equality $b^{2}\cong d^{2}\pmod{n}$. In particular, $\gcd(b,n)=\gcd(d,n)=1$, and up to multiplying by the inverse of $b$ we may assume $b\cong1\pmod{n}$. Furthermore, since $\diag(\zeta_{a},1,1)\in G$, we may assume $b=1$.

The third fixed point $(0:0:1)$ is distinguished and it descends to a rational point of $\bP_{\xi}$. If $G$ is not critical, then by \cite[Theorem 4]{giulio-tqs2} the singularity in $(0:0:1)$ of $\PP^{2}/G$ is of type $\rR$, hence the rational point is liftable.

\subsubsection{Pairwise non-equivalent representations}

If the three representations are pairwise non-equivalent, then each of the fixed points is distinguished, and they descend to three rational points of $\bP_{\xi}$. If by contradiction $\cP_{\xi}(k)=\emptyset$, these three rational points are not liftable, hence the singularities of $\PP^{2}/G$ in $(0:0:1),(0:1:0),(1:0:0)$ are not of type $\rR$. The three singularities are of type $\frac1{n_{1}}(b,d)$, $\frac1{n_{2}}(b-d,-d)$ and $\frac1{n_{3}}(d-b,-b)$ respectively for some $n_{i}|n$, $\gcd(n_{1},b)=\gcd(n_{1},d)=1$ and similarly for $n_{2},n_{3}$. Let $n'=\gcd(n_{1},n_{2},n_{3})$ be the greatest common divisor, then $\gcd(n',b)=\gcd(n',d)=\gcd(n',b-d)=1$. Since the three rational points are not liftable, $n'$ is even by \cite[Theorem 4]{giulio-tqs2}. This is absurd, since it implies that $b$, $d$ and $b-d$, being coprime with $n'$, are all odd.

\subsection{\boldsymbol{$H_{1}$}}\label{sec:H1}

Assume that $G$ is conjugate to $H_{1}\simeq C_{3}\times C_{3}$: it is not critical, nor lucky. We want to show that $\cG_{\xi}(k)\neq\emptyset$. If $g\in G$ is non-trivial, its fixed locus has exactly three points, and if $h\in G\setminus \left<g\right>$ then $g$ permutes the three fixed points of $h$. It follows that the union $F$ of the fixed loci of the non-trivial elements of $G$ is a distinguished subset with $12$ points, and $F/G\subset \PP^{2}/G$ descends to a reduced $0$-cycle $Z\subset X$ of degree $4$. In particular, there exists a finite extension $k'/k$ of degree prime with $3$ such that $Z(k')\neq \emptyset$.

The stabilizer of each point of $F$ is a cyclic group of order $3$, hence the four singularities of $\PP^{2}/G$ in the points of $F/G$ are all of type $\rR$ by \cite[Theorem 4]{giulio-tqs2}. Since $Z(k')\neq\emptyset$, by the Lang--Nishimura theorem for tame stacks we have that $\cG_{\xi}(k')\neq\emptyset$. Now observe that $\cG_{\xi}$ is abelian, and thus it corresponds to a cohomology class $\psi\in\H^{2}(k,A)$ where $A$, the band of $\cG_{\xi}$, is a $3$-torsion finite group scheme. We have that $[k':k]\psi=\operatorname{cor}_{k'/k}(\psi_{k'})=0$, hence $\psi=0$ since $[k':k]$ is prime with $3$ and $\H^{2}(k,A)$ is $3$-torsion. It follows that $\cG_{\xi}$ is neutral.

\subsection{Type (B)}

Up to conjugation, we may assume that $G=D\rtimes C_{3}$, where $D$ is non-trivial and diagonal, and $C_{3}$ is generated by a permutation matrix of order $3$. The fact that $C_{3}$ normalizes $D$ implies that $D$ has the form studied in \S\ref{sect:3eq}, and as such it is generated by three matrices $\diag(\zeta_{a},1,1)$, $\diag(1,\zeta_{a},1)$, $\diag(\zeta_{an},\zeta_{an}^{d},1)$ with $d^{2}-d+1\cong 0\pmod{n}$. 

Thanks to the preceding case, we may assume that $G$ is not $H_{1}$: under this assumption, $D\subset G$ is the only diagonalizable subgroup of index $3$, and the fixed locus $F$ of $D$ is a distinguished subset. Since $D$ is non-trivial, either $a\neq 1$ or $d$ is not congruent to $0,1$ modulo $n$: in both cases $F$ has exactly three points, $G$ acts transitively on it and $F/G\subset \PP^{2}/G$ descends to a rational point $p\in \bP_{\xi}(k)$. The singularity of $\PP^{2}/G$ in the point $F/G$ is cyclic of type $\frac 1n(1,d)$. 

If $d^{2}\not\cong 1\pmod{n}$, then the singularity is of type $\rR$ \cite[Theorem 4]{giulio-tqs2}. If $d^{2}\cong 1\pmod{n}$, since $d^{2}-d+1\cong 0\pmod{n}$, then $d\cong 2\pmod{n}$ and $n$ is either $1$ or $3$. In both cases the singularity is again of type $\rR$ by \cite[Theorem 4]{giulio-tqs2}.

\subsection{Type (C), not conjugate to $H_{2}$}

Assume that $G$ is of type {\bf (C)} and not conjugate to $H_{2}$. Up to conjugation there is a diagonal, normal subgroup $D\subset G$ with $G/D\simeq S_{3}$ and $M_{1}\in G$. 

Let us show that $D$ is the only normal, diagonal subgroup of index $6$. If $D'$ is another such subgroup, then the image of $D'$ in $S_{3}$ is normal, abelian and non-trivial, i.e. it is $C_{3}\subset S_{3}$. It follows that $D'$ contains an element of the form $M_{1}N$, where $N\in D$ is diagonal. If $D'\cap D$ is non-trivial, since $M_{1}N\in D'$ then $D'$ has no fixed points, hence it is not diagonal. If $D'\cap D$ is trivial, then $|D'|=|D|=3$ and $D$ is generated by $M_{0}$, since $\left<M_{0}\right>$ is the only diagonal subgroup of order $3$ normalized by $M_{1}$. It follows that $G$ is conjugate to $H_{2}$.

This implies that the fixed locus $F$ of $D$ is distinguished, it contains $3$ points and $F/G$ descends to a rational point $p\in \bP_{\xi}(k)$. We want to show that $p$ is liftable. Let $x\in F$ be a point with stabilizer $G_{x}$, we have that $G_{x}$ is an extension of $C_{2}$ by $D$.

As in the previous case, $D\simeq C_{a}\times C_{an}$ with generators $\diag(\zeta_{a},1,1)$, $\diag(1,\zeta_{a},1)$, $\diag(\zeta_{an},\zeta_{an}^{d},1)$, and again we have $d^{2}-d+1\cong 0\pmod{n}$. Now we have another condition, which is the fact that $D$ is normalized by a matrix $M=\begin{psmallmatrix}  & \beta &  \\ \alpha &  &  \\  &  & 1 \end{psmallmatrix}$, and this implies $d^{2}\cong 1\pmod{n}$, hence $n$ is either $1$ or $3$.

If $n=1$, then $M^{2}\in C_{a}\times C_{a}$ and hence $\alpha\beta$ is a power of $\zeta_{a}$. Up to multiplying $M$ by a suitable element of $C_{a}\times C_{a}$, we may thus assume that $\alpha\beta=1$. If $\alpha\beta=1$ then $M$ is a pseudoreflexion, and since $C_{a}\times C_{a}$ is generated by pseudoreflexions we get that $G_{x}$ is generated by pseudoreflexions as well. It follows that $p$ is smooth, and in particular liftable.

If $n=3$, then $G_{x}$ is an extension of $D_{3}$ by $C_{a}\times C_{a}$, hence it is $\rR_{2}$ by \cite[Propositions 6.17, 6.20]{giulio-angelo-moduli} and $p$ is liftable.

\subsection{$\boldsymbol{H_{3}}$, $\boldsymbol{\zeta_{12}}\in k$}

Observe that $\QQ(\zeta_{12})=\QQ(\zeta_{3},\zeta_{4})=\QQ(\sqrt{3},i)$, hence $\sqrt{3}\in k$. We have $\det M_{0}=\det M_{1}=1$, $\det M_{2}=-1$, $\det M_{3}=\det M_{4}=3\sqrt{3}$, hence $\det M_{i}\in k^{*3}$ for every $i\le 4$. Since $M_{i}\in\GL_{3}(k)$ for every $i$, we may thus find $a_{i}\in k^{*}$ such that $M_{i}'=a_{i}M_{i}\in \SL_{3}(k)$ for every $i\le 4$. Let $H_{i}'$ be the inverse image of $H_{i}$ in $\SL_{3}$, it follows that $H_{i}'$ and $H_{i}$ are constant group schemes over $k$ for $i\le 4$. Furthermore, $M_{4}'^{4}=\id\in\SL_{3}(k)$.

Since $H_{5}/H_{2}\simeq A_{4}$ and $H_{4}/H_{2}\simeq \operatorname{Kl}$, Lemma \ref{lemma:H1norm} implies that the normalizer of $H_{3}$ in $\PGL_{3}(K)$ is $H_{4}$. Observe that $\H^{1}(k,H_{4}')\to\H^{1}(k,H_{4}/H_{3})$ is surjective, because there is an homomorphism $C_{4}\to H_{4}'$ defined by $1\mapsto M_{4}'$ which lifts the projection $H_{4}'\to H_{4}/H_{3}\simeq C_{2}$, and $\H^{1}(k,C_{4})\to\H^{1}(k,C_{2})$ is surjective since $\zeta_{4}\in k$. 

Observe that the image of $\H^{1}(k,H_{4}')\to\H^{1}(k,H_{4})$ is contained, by construction, in the kernel of $\H^{1}(k,H_{4})\to\H^{1}(k,\PGL_{3})$, since the composition $\H^{1}(k,\SL_{3})\to\H^{1}(k,\GL_{3})\to\H^{1}(k,\PGL_{3})$ is trivial (notice that these are sets, not groups, but they have a preferred object and it makes sense to consider kernels). We may then apply \cite[Theorem 16]{giulio-fmod} and obtain that every $H_{3}$-structure with field of moduli $k$ descends to $\PP^{2}_{k}$.

\subsection{$\boldsymbol{H_{4}}$}

Assume $G=H_{4}$, we have that $H_{1}\simeq C_{3}\times C_{3}$ is the only $3$-Sylow subgroup (it is normal) and hence it is characteristic. It follows that the union $F$ of the fixed loci of the non-trivial elements of $H_{1}$ is a finite, distinguished subset of degree $12$ which is the union of $4$ triangles corresponding to the $4$ cyclic subgroups of $C_{3}\times C_{3}$ (see \ref{sec:H1}). The action of $H_{4}$ on $F$ is transitive since $H_{1}$ acts transitively on each triangle and $H_{4}/H_{2}\subset H_{5}/H_{2}\simeq A_{4}$ acts as the Klein group, and hence transitively, on the set of $4$ triangles.

It follows that $F/H_{4}$ descends to a rational point $p\in \bP_{\xi}(k)$. The stabilizer of a point of $F$ has order $72/12=6$ and is isomorphic $D_{3}$ (it's easy to see that the element of order $2$ maps to $-\operatorname{Id}\in\SL(2,3)$). Since $D_{3}$ is $\rR_{2}$ \cite[Proposition 6.17]{giulio-angelo-moduli}, then $p$ is liftable.

\subsection{$\boldsymbol{H_{5}}$}

Assume $G=H_{5}$, it has order $|H_{5}|=216=2^{3}\cdot 3^{3}$. Let $g\in H_{5}$ be an element mapping to $-\operatorname{Id}\in\SL(2,3)$, conjugation by $g$ acts as $-1$ on $H_{1}$ and as the identity on $H_{5}^{\rm ab}$. Because of this, and since $H_{1}$ has odd order, the homomorphism $H_{1}\to H_{5}^{\rm ab}$ is trivial (an element in the image has odd order and is equal to its inverse, i.e. it is the identity). Moreover, we have a projection $H_{5}^{\rm ab}\to H_{5}/H_{4}\simeq C_{3}$. This implies that $9$ is the largest power of $3$ dividing the order of $[H_{5},H_{5}]$, hence $H_{1}$ is the unique $3$-Sylow subgroup of $[H_{5},H_{5}]$ (it is normal), which in turn implies that $H_{1}$ is characteristic in $H_{5}$.

Since $H_{1}$ is characteristic, the union $F$ of the fixed loci of the non-trivial elements of $H_{1}$ is a finite, distinguished subset of degree $12$. As in the previous case, the action of $H_{5}$ on $F$ is transitive and hence $F/H_{5}\subset \PP^{2}/H_{5}$ descends to a rational point $p\in \bP_{\xi}(k)$. Let $x\in F$ be a point and $G_{x}\subset H_{5}$ its stabilizer, the degree of $G_{x}$ is $216/12=18$. Since $H_{4}\subset H_{5}$ is normal, $H_{4}\cap G_{x}$ is a normal subgroup of $G_{x}$, it is isomorphic to $D_{3}$ by the argument given in the previous case and $G_{x}/(H_{4}\cap G_{3})\simeq C_{3}$. By \cite[6.17, 6.19, 6.20]{giulio-angelo-moduli} we get that $G_{x}$ is of type $\rR_{2}$, hence $p$ is liftable.

\subsection{$\boldsymbol{A_{5}}$ or $\boldsymbol{A_{6}}$}

If $G\simeq A_{n}$ for $n=5,6$, let us first show that the eigenvalues of $(1,2,3)\in A_{n}$ as an element of $G\subset \PGL_{3}(K)$ are pairwise different. If they are not, up to conjugation we may assume that the matrix of $(1,2,3)$ is $\diag(\zeta_{3},\zeta_{3},1)$. Since $(1,2)(4,5)$ normalizes $\left<(1,2,3)\right>$, then it must stabilize its fixed locus, namely $(0:0:1)$ and $\{(s:t:0)\}$. It follows that $(1,2)(4,5)\in\GL_{2}\subset\PGL_{3}$ and hence it commutes with $(1,2,3)=\diag(\zeta_{3},\zeta_{3},1)$, which is absurd.

Every automorphism of $A_{n}$ maps a $3$-cycle to a $3$-cycle. For $n=5$, this is obvious. For $n=6$, the only other elements of order $3$ are double $3$-cycles, but there is only one conjugacy class of $3$-cycles with $40$ elements and two conjugacy classes of double cocycles with $20$ elements each. Furthermore, the conjugacy action of $A_{n}$ on $3$-cycles is transitive for both $n=5,6$.

Let $x_{1},x_{2},x_{3}$ be the fixed points of $(1,2,3)$. Since $(1,2)(4,5)$ normalizes $\left<(1,2,3)\right>$ but it does not commute with $(1,2,3)$, it fixes exactly one of the $x_{i}$s, say $x_{1}$, and swaps $x_{2},x_{3}$. Let $F$ be the orbit of $x_{1}$ and $F'$ the orbit of $x_{2},x_{3}$: since there is only one conjugacy class of $3$-cycles, the union of the fixed loci of the $3$-cycles is $F\cup F'$ and we have either $F=F'$ or $|F'|=2|F|\neq |F|$. In any case, $F$ is a distinguished subset, $x_{1}\in F$ and $A_{n}$ acts transitively on it. It follows that $F/A_{n}$ descends to a rational point $p\in \bP_{\xi}(k)$. 

Let $G_{x_{1}}$ be the stabilizer of $x_{1}$, since it contains both $(1,2,3)$ and $(1,2)(4,5)$ then it has trivial center (their respective centralizers have trivial intersection). The action of $G_{x_{1}}$ on the tangent space of $x_{1}$ gives us an embedding $G_{x_{1}}\subset\GL_{2}(K)$. Since $G_{x_{1}}$ has trivial center, the composition $G_{x_{1}}\to\GL_{2}(K)\to\PGL_{2}(K)$ is injective, and hence $G_{x_{1}}$ is isomorphic either to $D_{n},A_{4},S_{4}$ or $A_{5}$. This implies that $G_{x_{1}}\simeq D_{n}$ with $n$ odd, since in all the other cases $G_{x_{1}}\subset\GL_{2}(K)$ would contain a subgroup isomorphic to $C_{2}\times C_{2}$ and hence the matrix $-\operatorname{Id}\in\GL_{2}(K)$, which is central. It follows that $G_{x_{1}}$ is $\rR_{2}$ by \cite[Propositions 6.17]{giulio-angelo-moduli}, hence $x$ is liftable.

\subsection{$\boldsymbol{\PSL(2,7)}$}

Assume $G\simeq\PSL(2,7)$, we have $|\PSL(2,7)|=168=2^{3}\cdot 3\cdot 7$. We may choose an embedding of $\PSL(2,7)$ in $\PGL_{3}(K)$ so that it contains the matrix $M_{7}=\diag(1,\zeta_{7},\zeta_{7}^{3})$ and the permutation matrix $M_{1}$ of order $3$ given in \S\ref{sect:hessian}, see \cite[Chapter XII, \S 123]{mbd}. Furthermore, we have $M_{1}^{-1}M_{7}M_{1}=M_{7}^{4}$.

Let $n_{7}$ be the number of $7$-Sylow subgroups, by Sylow's third theorem $n_{7}$ divides $2^{3}\cdot 3$ and $n_{7}\cong 1\pmod{7}$, i.e. $n_{7}$ is either $1$ or $8$. The upper or lower triangular matrices in $\PSL(2,7)$ with $1$-s on the diagonal are two different $7$-Sylow subgroups, hences $n_{7}=8$. It follows that the normalizer of a $7$-Sylow subgroup has order $168/8=21$. Since $M_{1}$ normalizes $M_{7}$ and has order $3$, then the normalizer of $\left<M_{7}\right>$ is $\left<M_{7},M_{1}\right>$ and hence the centralizer of $M_{7}$ is $\left<M_{7}\right>$.

Let $F$ be the union of the fixed loci of all the $7$-Sylow subgroups, it is a distinguished subset. Since $M_{1}$ permutes the three fixed points of $M_{7}$ and $\PSL(2,7)$ acts transitively by conjugation on the $7$-Sylow subgroups, then the action of $G$ on $F$ is transitive, and $F/G$ descends to a rational point $p\in\bP_{\xi}(k)$.

Let $x=(0:0:1)\in F$ and let $G_{x}$ be its stabilizer, then $M_{7}\in G_{x}$. Since the centralizer of $M_{7}$ is $\left<M_{7}\right>$, either $G_{x}\simeq C_{7}$ or $G_{x}$ has trivial center. If $G_{x}\simeq C_{7}$, then it is $\rR_{2}$ by \cite[Theorem 6.19]{giulio-angelo-moduli} and hence $p$ is liftable. If $G_{x}$ has trivial center, the homomorphism $G_{x}\to\GL_{2}(K)\to\PGL_{2}(K)$ given by the action on the tangent space of $p$ is injective, hence $G_{x}$ is dihedral since the other finite subgroups of $\PGL_{2}(K)$ are either abelian or of order prime with $7$. If $G_{x}$ is dihedral, then it is of type $\rR_{2}$ by \cite[Propositions 6.17]{giulio-angelo-moduli}, hence we get that $p$ is liftable in this case, too.

\vspace{1em}

This concludes the proof of the first half of the theorem. Let us now construct the counterexamples. As we explained in \S\ref{sect:strategies}, we will do this by applying the first half of \cite[Theorem 4]{giulio-fmod}.

\subsection{$\boldsymbol{C_{a}\times C_{an}}$, $\boldsymbol{d^{2}-d+1\cong 0\pmod{n}}$, $\boldsymbol{3|an}$}

With an abuse of notation, for every integer $m$ we write $\Delta$ for the diagonal subgroup of $C_{m}^{3}$, we have a preferred embedding $C_{m}^{3}/\Delta\subset\PGL_{3}(K)$ by diagonal matrices.

\subsubsection{$\boldsymbol{3|n}$}

Consider the action by permutation of $C_{3}$ on $C_{3}^{3}/\Delta$, the semidirect product $E=(C_{3}^{3}/\Delta)\rtimes C_{3}$ is a non-abelian group of order $27$ with a central subgroup $A=\left<(1,2,0)\right>\subset C_{3}^{3}/\Delta\subset E$ such that $E/A\simeq C_{3}^{2}$. Observe that $E$ is $3$-torsion: if $(c,\phi)$ is an element, then $3(c,\phi)=(c+\phi(c)+\phi^{2}(c),0)$ is trivial since clearly $c+\phi(c)+\phi^{2}(c)\in \Delta\subset C_{3}^{3}$. Since $E/A\simeq C_{3}^{2}$ is abelian, $A$ is central and $E$ is non-abelian, the extension is non-split.

Since $3|n$, then $d^{2}-d+1\cong0\pmod{n}$ implies that $d\cong 2\pmod{3}$ and $9\nmid n$. Let $N_{0}$ be the subgroup of $C_{an}^{3}/\Delta\subset\PGL_{3}(K)$ generated by $G$ and by $C_{3a}^{3}/\Delta\subset\PGL_{3}(K)$, clearly $G\subset N_{0}\subset C_{an}^{3}/\Delta$ and $N_{0}/G\simeq C_{3}$. The natural projection $C_{an}^{3}/\Delta\to C_{3}^{3}/\Delta$ maps $G$ onto $A=\left<(1,2,0)\right>$ since $d\cong 2\pmod{3}$, while $N_{0}\to C_{3}^{3}/\Delta$ is surjective by construction.

The condition $d^{2}-d+1\cong 0\pmod{n}$ implies that the action by permutation of $C_{3}$ on $C_{an}^{3}/\Delta$ restricts to $G$, and hence to $N_{0}$. Denote by $N$ the semidirect product $N_{0}\rtimes C_{3}\subset\PGL_{3}(K)$ generated by $N_{0}$ and by $M_{1}$, clearly $N/G\simeq C_{3}^{2}$ and we have a commutative diagram of short exact sequences
\[\begin{tikzcd}
	1\rar	&	G\rar\dar	&	N\rar\dar	&	N/G\rar\ar[d,"\sim"]		&	1	\\
	1\rar	&	A\rar		&	E\rar		&	C_{3}^{2}\rar	&	1
\end{tikzcd}\]

Choose $k=\CC((s))((t))$, and consider $G\subset N\subset\PGL_{3}(k)$ as constant group schemes, so that torsors correspond to homomorphisms from the Galois group. The natural projection $\gal(K/k)=\hat{\ZZ}^{2}\to C_{3}^{2}$ defines an $N/G$-torsor $T$ which does not lift to $E$, since $E$ is $3$-torsion and hence a lifting $\hat{\ZZ}^{2}\to E$ would give us a section $C_{3}^{2}\to E$. In particular, $T$ does not lift to $N$, as desired.

\subsubsection{$\boldsymbol{3\nmid n}$, $\boldsymbol{3|a}$}

Now assume $3\nmid n$, $3|a$. Since $3$ does not divide $n$ and $2^{2}-2+1\cong0\pmod{3}$, we may assume that $d$ is such that $d^{2}-d+1\cong 0\pmod{3n}$. Let $N_{0}\subset C_{3an}^{3}/\Delta\subset\PGL_{3}(K)$ be the subgroup generated by $(3n,0,0)$, $(0,3n,0)$ and $(1,d,0)$, then $G\subset N_{0}$ and $N_{0}/G\simeq C_{3}$. Since $d^{2}-d+1\cong 0\pmod{3n}$, then both $G$ and $N_{0}$ are stabilized by the action of $C_{3}$ on $C_{3an}^{3}/\Delta$. Let $N$ be the semidirect product $N_{0}\rtimes C_{3}$, again we have $N/G\simeq C_{3}^{2}$.

Observe that every element of $N_{0}\subset C_{3an}^{3}/\Delta$ fixed by $C_{3}$ is contained in $C_{3}^{3}/\Delta\subset C_{3an}^{3}/\Delta$, and since $3|a$ then $C_{3}^{3}/\Delta\subset G$. Because of this, there are no abelian subgroups of $N$ which map surjectively on $N/G\simeq C_{3}^{2}$. Hence, if we choose $k=\CC((s))((t))$, $\gal(K/k)=\hat{\ZZ}^{2}\to N/G=C_{3}^{2}$ as in the previous case, there is no lifting $\gal(K/k)\to N$.

\subsection{$\boldsymbol{C_{a}\times C_{a2^{b}n}}$, $\boldsymbol{d^{2}\cong 1\pmod{n}}$, $\boldsymbol{d\cong\pm1\pmod{2^{b}}}$}

Assume that $G\simeq C_{a}\times C_{a2^{b}n}$ is generated by $\diag(\zeta_{a},1,1)$, $\diag(1,\zeta_{a},1)$ and $\diag(\zeta_{a2^{b}n},\zeta_{a2^{b}n}^{d},1)$ for some positive integers $a,b,n,d$ with $d^{2}\cong 1\pmod{n}$, $d\cong\pm1\pmod{2^{b}}$ and $n$ odd.

Consider the semidirect product $C_{2^{b}}^{2}\rtimes C_{2}$ where the action swaps the coordinates, define $E_{1}$ as the subgroup generated by $(1,1,0),(2^{b-1},0,0),(0,0,1)$ and $E_{-1}$ as the one generated by $(1,-1,0),(2^{b-1},0,0),(0,0,1)$, and let $A_{\pm1}\subset E_{\pm 1}$ be the subgroup generated by $(1,\pm 1,0)$. We have that $E_{\pm 1}$ is an extension of $C_{2}^{2}$ by $A_{\pm 1}\simeq C_{2^{b}}$, and there is no abelian subgroup of $E_{\pm1}$ mapping surjectively on $C_{2}^{2}=E_{\pm 1}/A_{\pm1}$.

Let $N_{0}\subset\PGL_{3}(K)$ be the subgroup generated by $G$ and by $\diag(\zeta_{a2^{b}n}^{2^{b-1}n},1,1)=\diag(\zeta_{2a},1,1)$, it is abelian and $G$ has index $2$ in $N_{0}$. Observe that $\diag(\zeta_{2a},\zeta_{2a}^{d},1)\in G$, and since $d$ is odd then $\diag(\zeta_{2a},\zeta_{2a}^{-1},1)\in G$ and $\diag(1,\zeta_{2a},1)\in N_{0}$. Since $d^{2}\cong 1\pmod{2^{b}n}$, a permutation matrix swapping the first two coordinates normalizes both $G$ and $N_{0}$, let $N\simeq N_{0}\rtimes C_{2}$ be the subgroup generated by this permutation matrix and $N_{0}$. 
 
Consider the natural projection $N_{0}\to C_{2^{b}}^{2}$, if $d\cong 1\pmod{2^{b}}$ then it extends to a surjective map $N\to E_{1}$, while if $d\cong-1\pmod{2^{b}}$ it extends to a surjective map $N\to E_{-1}$. In both cases, $G$ maps surjectively on $A_{\pm 1}$. We thus have a commutative diagram of short exact sequences
\[\begin{tikzcd}
	1\rar	&	G\rar\dar			&	N\rar\dar			&	N/G\rar\ar[d,"\sim"]		&	1	\\
	1\rar	&	A_{\pm 1}\rar		&	E_{\pm1}\rar		&	C_{2}^{2}\rar	&	1
\end{tikzcd}\]

We may then choose $k=\CC((s))((t))$, $\gal(K/k)=\hat{\ZZ}^{2}\to N/G\simeq C_{2}^{2}$ similarly to the previous cases, there is no lifting $\gal(K/k)\to N$.

\subsection{\boldsymbol{$H_{2}$}}

Again, we choose $k=\CC((s))((t))$ and use constant group schemes. Observe that $H_{2}\subset H_{4}$ is normal, $H_{4}/H_{1}\subset\SL(2,3)$ is the quaternion group, $H_{2}/H_{1}\subset H_{4}/H_{1}$ is the center and $H_{4}/H_{2}\simeq C_{2}^{2}$. Since the quaternion group has no abelian subgroups of rank $2$, the natural projection $\gal(K/k)=\hat{\ZZ}^{2}\to H_{4}/H_{2}=C_{2}^{2}$ does not lift to $H_{4}$.

\subsection{\boldsymbol{$H_{3}$}, \boldsymbol{$\zeta_{12}\notin k$}}

Take $k=\RR$, and consider the group scheme structures $\fH_{3},\fH_{4}$ on $H_{3},H_{4}$ given in \S\ref{sect:hessch}. We have a factorization 
\[\fH_{4}\to\fH_{4}/\fH_{2}\simeq\fK\to\fH_{4}/\fH_{3}\simeq C_{2}.\]
It is enough to show that $\H^{1}(\RR,\fK)$ is trivial, since this implies that the only non-trivial $C_{2}$-torsor over $\RR$ does not lift to $\fH_{4}$.

By contradiction, let $T\to\spec \RR$ be a non-trivial $\fK$-torsor. Since it is non-trivial and has degree $4$, then clearly $T=\spec(\CC\times \CC)$ as a scheme and $\fK\subset\fS_{4}$ where $\fS_{4}$ is the group scheme of automorphisms of the étale scheme $T$; the group scheme $\fS_{4}$ is a twisted for of $S_{4}$. The two automorphisms $(a,b)\mapsto(b,a)$ and $(a,b)\mapsto (\bar{a},\bar{b})$ of $\spec(\CC\times \CC)$ act on the four geometric points of $T$ as different double transpositions, hence we have an embedding $\operatorname{Kl}\subset\fS_{4}$, where $\operatorname{Kl}$ is the Klein group with trivial Galois action. It follows that $\fK=\operatorname{Kl}$, which is absurd: since $\zeta_{3}\not\in \RR$, by construction the Galois action on $\fK(\CC)$ is non-trivial.

\section{Plane curves}

Let $j:C\hookrightarrow \PP^{2}_{K}$ be a smooth plane curve of degree $d\ge 4$ defined over $K$. The embedding in $\PP^{2}_{K}$ is unique up to composition with elements of $\PGL_{3}(K)$ \cite[Appendix A, \S 1, Exercise 18]{acgh1} and hence $\aut(C)=\aut(\PP^{2}_{K},C)$. The field of moduli $k_{(\PP^{2},C)}$ of the pair clearly contains the field of moduli $k_{C}$ of $C$. Let $\cG_{(\PP^{2},C)}\to\spec k_{(\PP^{2},C)}$ be the residual gerbe of $(\PP^{2},C)$ and $\cG_{C}\to\spec k_{C}$ the residual gerbe of $C$, there is a natural forgetful morphism $\cG_{(\PP^{2},C)}\to\cG_{C}$.

\begin{lemma}\label{lemma:plane}
	The fields of moduli $k_{(\PP^{2},C)}$ and $k_{C}$ are equal, and $\cG_{(\PP^{2},C)}\to\cG_{C}$ is an isomorphism.
\end{lemma}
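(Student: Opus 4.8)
The plan is to treat the two assertions in turn: the equality of fields of moduli by a direct Galois-theoretic comparison of the two defining subgroups, and the isomorphism of gerbes by a formal argument on bands. Both rest only on the uniqueness of the plane embedding up to $\PGL_{3}(K)$ recalled above.

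First, for the equality of fields of moduli, I would show that the two subgroups of $\gal(K/k)$ cutting out $k_{C}$ and $k_{(\PP^{2},C)}$ coincide. Let $H_{C}$ (resp. $H_{(\PP^{2},C)}$) be the group of $\sigma$ with $\sigma^{*}C\simeq C$ as abstract curves (resp. $\sigma^{*}(\PP^{2}_{K},C)\simeq(\PP^{2}_{K},C)$ as pairs). The inclusion $H_{(\PP^{2},C)}\subseteq H_{C}$ is clear and gives $k_{C}\subseteq k_{(\PP^{2},C)}$. For the reverse inclusion, take $\sigma\in H_{C}$ and an abstract isomorphism $f\colon\sigma^{*}C\to C$. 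Since $\PP^{2}$ is defined over $k$, the conjugate $\sigma^{*}C$ is again a smooth plane curve of degree $d\ge 4$ in $\PP^{2}_{K}$, and $f$ is an isomorphism of smooth plane curves; by uniqueness of the embedding up to $\PGL_{3}(K)$, the map $f$ is the restriction of an element of $\PGL_{3}(K)$, so $\sigma^{*}(\PP^{2}_{K},C)\simeq(\PP^{2}_{K},C)$ and $\sigma\in H_{(\PP^{2},C)}$. Hence $H_{C}=H_{(\PP^{2},C)}$ and $k_{C}=k_{(\PP^{2},C)}$.

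Second, once the two fields of moduli agree, the forgetful morphism $\cG_{(\PP^{2},C)}\to\cG_{C}$ is a morphism of finite gerbes over one and the same base $\spec k$, where $k:=k_{C}=k_{(\PP^{2},C)}$. I would prove it is an isomorphism by computing its effect on bands. After base change to $K$ both gerbes acquire their tautological points, with automorphism groups $\aut_{K}(\PP^{2}_{K},C)$ and $\aut_{K}(C)$ respectively, and the forgetful morphism induces the restriction homomorphism $\aut_{K}(\PP^{2}_{K},C)\to\aut_{K}(C)$. This is exactly the identification $\aut(\PP^{2}_{K},C)=\aut(C)$ recalled above: every automorphism of $C$ extends to $\PP^{2}_{K}$, and the extension is unique because a linear map is determined by its action on the non-degenerate curve $C$. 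Thus the induced morphism of bands is an isomorphism over $K$, compatibly with the Galois action, hence an isomorphism of bands over $k$. Since a morphism of gerbes over a base which induces an isomorphism on bands is an isomorphism, I conclude that $\cG_{(\PP^{2},C)}\to\cG_{C}$ is an isomorphism.

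The only substantive input in either step is the uniqueness of the plane embedding up to $\PGL_{3}(K)$, which is already available, so I expect the real difficulty to be organizational rather than computational: one must check that $\cG_{(\PP^{2},C)}\to\cG_{C}$ is genuinely a morphism over $\spec k$ before invoking the band criterion (which is why the equality of fields of moduli is proved first), and that the band map is correctly identified with the restriction $\aut(\PP^{2}_{K},C)\to\aut(C)$. Were one instead to prove full faithfulness of the forgetful functor directly, the genuine obstacle would surface: one would need a relative version of the uniqueness of the embedding, reconstructing the Brauer--Severi surface from the abstract curve in families via the canonically determined line bundle $\cO_{C}(1)$. The band argument sidesteps this, since it uses the uniqueness statement only pointwise over $K$.
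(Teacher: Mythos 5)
Your proposal is correct and follows essentially the same route as the paper: the equality of fields of moduli is obtained by promoting an abstract isomorphism $\sigma^{*}C\to C$ to an isomorphism of pairs via the uniqueness of the plane embedding up to $\PGL_{3}(K)$, and the gerbe statement reduces to the equality $\aut(\PP^{2}_{K},C)=\aut(C)$ after base change to $K$. The only cosmetic difference is that you invoke the band criterion for morphisms of gerbes where the paper simply checks the isomorphism after base change to $K$ (using descent of the property of being an isomorphism); both are formal consequences of the same fact.
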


\begin{proof}
	Let $\sigma\in\gal(K/k_{C})$ be an element, there exists an isomorphism $\phi:C\to\sigma^{*}C$. Choose $\tau:\PP^{2}_{K}\to\sigma^{*}\PP^{2}_{K}$ any isomorphism. The composition
	\[\tau^{-1}\circ \sigma^{*}j\circ\phi:C\to\PP^{2}\]
	is a plane embedding of $C$, hence it is equal to $g\circ j$ for some $g\in\PGL_{3}(K)$. We thus have a commutative diagram
	\[\begin{tikzcd}
		C\rar[hook, "j"]\dar["\phi"]			&	\PP^{2}\dar["\tau\circ g"]	\\
		\sigma^{*}C\rar[hook,"\sigma^{*}j"]		&	\sigma^{*}\PP^{2}
	\end{tikzcd}\]
	which shows that $\sigma\in\gal(K/k_{(\PP^{2},C)})$ and hence $k_{C}=k_{(\PP^{2},C)}$. The fact that $\cG_{(\PP^{2},C)}\to\cG_{C}$ is an isomorphism can now be checked after base changing from $k_{C}=k_{(\PP^{2},C)}$ to $K$, where it follows from the equality $\aut(C)=\aut(\PP^{2},C)$.
\end{proof}

Up to enlarging $k$, we may then assume $k_{(\PP^{2},C)}=k_{C}=k$. In particular, we get that $C$ is defined over $k$ if and only if the pair $(\PP^{2},C)$ is defined over $k$. As a direct consequence of Lemma~\ref{lemma:plane}, we obtain a new proof of the following result by J. Roè and X. Xarles.

\begin{corollary}[{\cite[Theorem 5]{roe-xarles}}]\label{corollary:brauer}
	For every model $\fC$ of $C$ over $k$, there exists a unique Brauer-Severi surface $P_{\fC}$ over $k$ with an embedding $\fC\hookrightarrow P_{\fC}$. 
\end{corollary}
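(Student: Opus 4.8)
The plan is to deduce the corollary formally from Lemma~\ref{lemma:plane} together with the universal property of the residual gerbe recalled in \S\ref{sect:strategies}. The key observation is that a model $\fC$ of $C$ over $k$ is precisely the datum of a $k$-point of the residual gerbe $\cG_C$ of the abstract curve, whereas a Brauer--Severi surface over $k$ carrying an embedded twisted form of $C$ is precisely a $k$-point of the residual gerbe $\cG_{(\PP^2,C)}$ of the pair; the forgetful morphism $\cG_{(\PP^2,C)}\to\cG_C$, shown to be an isomorphism in Lemma~\ref{lemma:plane}, is exactly what translates one datum into the other.

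Concretely, I would first record that $\fC$ defines an object $x\in\cG_C(k)$. Via the isomorphism $\cG_{(\PP^2,C)}\larrowsim\cG_C$ of Lemma~\ref{lemma:plane}, $x$ lifts to an object $\tilde x\in\cG_{(\PP^2,C)}(k)$, unique up to unique isomorphism. By the characterization of $\cG_{(\PP^2,C)}$ recalled in \S\ref{sect:strategies}, $\tilde x$ is the datum of a projective bundle $P_{\fC}\to\spec k$ --- i.e.\ a Brauer--Severi surface over $k$ --- together with a closed subscheme $\cC\subset P_{\fC}$ that is a twisted form of $C$; since the forgetful morphism sends $\tilde x$ back to $x$, the underlying abstract curve $\cC$ is identified with $\fC$, which yields the embedding $\fC\hookrightarrow P_{\fC}$. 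For uniqueness, given two embeddings $\fC\hookrightarrow P$ and $\fC\hookrightarrow P'$ into Brauer--Severi surfaces over $k$, I would base change to $K$: this trivializes both surfaces and, by the uniqueness of the plane embedding up to $\PGL_3(K)$ used in Lemma~\ref{lemma:plane}, exhibits each as a twisted form of the pair $(\PP^2_K,C)$. Thus $P$ and $P'$ define objects of $\cG_{(\PP^2,C)}(k)$ both mapping to $\fC\in\cG_C(k)$; since $\cG_{(\PP^2,C)}\to\cG_C$ is an isomorphism, the fiber over $\fC$ is a single isomorphism class, producing an isomorphism $P\larrowsim P'$ compatible with the two embeddings.

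I expect that the only genuine subtlety, and hence the step to state carefully, is this bookkeeping: identifying an abstract model with a $k$-point of $\cG_C$ and a Brauer--Severi surface-with-curve with a $k$-point of $\cG_{(\PP^2,C)}$, and checking that the forgetful morphism matches these descriptions. All the geometric content --- that the plane embedding is an intrinsic invariant of $C$, coming from the uniqueness of the embedding up to $\PGL_3(K)$ --- has already been absorbed into Lemma~\ref{lemma:plane}. Consequently no further computation is required, and the corollary is a formal consequence of the isomorphism of residual gerbes; in effect the universal projective bundle $\cP_{(\PP^2,C)}\to\cG_{(\PP^2,C)}$ restricted along the $k$-point $\tilde x$ is what produces $P_{\fC}$ directly.
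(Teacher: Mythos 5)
Your proposal is correct and follows exactly the route the paper intends: the paper gives no written proof, asserting only that the corollary is ``a direct consequence of Lemma~\ref{lemma:plane}'', and your argument is precisely the unwinding of that assertion --- identifying models of $C$ with $k$-points of $\cG_{C}$, Brauer--Severi surfaces with embedded twists with $k$-points of $\cG_{(\PP^{2},C)}$, and transporting along the isomorphism of residual gerbes. The bookkeeping you carry out (including deducing uniqueness from full faithfulness of the forgetful equivalence) is exactly the content the paper leaves implicit.
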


\begin{remark}
	Notice that the stabilizer of a point of $C$ acts faithfully on the tangent space, in particular it is cyclic. Because of this, if $L\subset\PP^{2}_{K}$ is a line fixed by a non-trivial element $g$ of $\aut(C)$ then $C$ has normal crossing with $L$: if $p\in L\cap C$ and $L$ is tangent to $C$ in $p$, then $g$ acts trivially on the tangent space of $C$ in $p$, which is absurd. We are going to use these observations repeatedly without further justification.
\end{remark}

In Theorem~\ref{theorem:cycles} we have proved that the automorphism group of a cycle in $\PP^{2}$ not defined over the field of moduli must have a very specific form. In the next two propositions, we show that if the cycle is a smooth curve then we get even tighter constraints on the automorphism group.

\begin{proposition}\label{proposition:crit1}
	Let $k$ be a field of characteristic $0$ with algebraic closure $K$, $C\subset\PP^{2}_{K}$ a smooth plane curve over $K$ of degree $\ge 4$ with field of moduli $k$.
	
	Assume that $C$ is not defined by a homogeneous polynomial with coefficients in $k$, and that $\aut(C)$ has the form $C_{a}\times C_{an}$ generated by $\diag(\zeta_{a},1,1)$, $\diag(1,\zeta_{a},1)$, $\diag(\zeta_{an},\zeta_{an}^{e},1)$ for some positive integers $a,e,n$ with $e^{2}-e+1\cong 0\pmod{n}$. 
	
	Such an $e$ exists if and only if $n$ is odd and $-3$ is a square modulo $n$. Furthermore, one of the following holds.
	\begin{itemize}
		\item $an\mid d$, or
		\item $a=1$, $n\mid d^{2}-3d+3$ and $n\neq d^{2}-3d+3$.
	\end{itemize}
\end{proposition}

\begin{proof}
	Since $e^{2}-e+1\cong 0\pmod{n}$, we get that $n$ is odd: the equation has no solutions modulo $2$. It follows that the singularities in the three points $(0:0:1)$, $(0:1:0)$ and $(1:0:0)$ of $\PP^{2}/\aut(C)$ are of type $\rR$ thanks to \cite[Theorem 4]{giulio-tqs2}, since their local fundamental group has odd degree $n$. They form a distinguished subset, hence they define a $0$-cycle $Z$ of degree $3$ on the compression $\bP$.
	
	If $Z$ contains a rational point, then $\cP(k)\neq\emptyset$ by the Lang--Nishimura theorem for stacks \cite[Theorem 4.1]{giulio-angelo-valuative} applied to $\tilde{\bP}\dashrightarrow\cP$, where $\tilde{\bP}\to P$ is a resolution of singularities. This is absurd, since we are assuming that $C$ is not defined by a homogeneous polynomial with coefficients in $k$. Hence, $Z=\{z\}$ contains only one point $z$ with $[k(z):k]=3$. There are two cases: either $\bC\subset\bP$ contains $z$, or not. Equivalently, either $C$ contains the three points $(0:0:1)$, $(0:1:0)$, $(1:0:0)$, or none.
	
	{\bf Case 1: the three points are not in $C$.} Consider the line $L$ of points $(x:0:z)$, the group $\aut(C)=C_{a}\times C_{an}$ maps $L$ to itself. There are two fixed points on $L$, and all the other orbits have order $an$. By assumption, $C$ does not contain the fixed points. This implies that $an$ divides the degree of each orbit in $C\cap L$ considered as a $0$-cycle with multiplicities, this in turn implies that $an$ divides the degree of the $0$-cycle $C\cap L$, which is $d$.
	
	{\bf Case 2: the three points are in $C$.} Since the stabilizers of points of $C$ are cyclic and $\aut(C)$ fixes $(0:0:1)$, then $a=1$. Since $a=1$, the three points $(0:0:1)$, $(0:1:0)$ and $(1:0:0)$ are the only points of $\PP^{2}$ with non-trivial stabilizer. It follows that the ramification divisor of $C\to C/\aut(C)$ has degree $3(n-1)$. By Riemann--Hurwitz, we get
	\[d(d-3)=2n(h-1)+3(n-1),\]
	or equivalently
	\[d^{2}-3d+3=n(2h+1).\]
	It remains to show that $n\neq d^{2}-3d+3$, or equivalently $h\neq 0$. Assume by contradiction that $h=0$. The three points $(0:0:1)$, $(0:1:0)$ and $(1:0:0)$ are fixed and form a distinguished subset, hence they define a divisor of degree $3$ on the compression $\bC$ of $C$. Since $\bC$ is a twisted form of $C/\aut(C)$, it has genus $h=0$, hence $\bC$ is isomorphic to $\PP^{1}$ since it has a divisor of odd degree. We thus get a rational map $\PP^{1}\dashrightarrow\cG$, and hence $\cG(k)\neq \emptyset$ since $\PP^{1}(k)$ is dense, thus giving a contradiction.	
	
	To conclude, we have to show that a solution to $e^{2}-e+1\cong 0\pmod{n}$ exists if and only if $n$ is odd and $-3$ is a square modulo $n$. The fact that $n$ must be odd is obvious, since there are no solutions modulo $2$. Assuming that $n$ is odd, we can multiply by $4$ and obtain the equation
	\[(2e-1)^{2}\cong -3\pmod{n}\]
	which clearly has solutions if and only if $-3$ is a square modulo $n$.
\end{proof}

\begin{proposition}\label{proposition:crit2}
	Let $k$ be a field of characteristic $0$ with algebraic closure $K$, $C\subset\PP^{2}_{K}$ a smooth plane curve over $K$ of degree $\ge 4$ with field of moduli $k$. 
	
	Assume that $C$ is not defined by a homogeneous polynomial with coefficients in $k$, and that $\aut(C)$ has the form $C_{a}\times C_{2^{b}an}$ generated by $\diag(\zeta_{a},1,1)$, $\diag(1,\zeta_{a},1)$, $\diag(\zeta_{2^{b}an},\zeta_{2^{b}an}^{e},1)$ for $a,b,e,n$ positive integers with $e\cong\pm 1\pmod{q}$ for each prime power $q\mid 2^{b}n$ and $n$ odd.
	
	If $a\neq 1$ then $2^{b}an\mid d$, while if $a=1$ then $2^{b+1}n\mid d(d-2)$.
\end{proposition}

\begin{proof}
	Let $L_{i}$ be the line $x_{i}=0$ for $i=1,2,3$, the points of $\PP^{2}$ with non-trivial stabilizer are contained in $L_{1},L_{2},L_{3}$, see \S\ref{sect:2eq}. If $a\neq 1$, the three intersection points of these lines have non-cyclic stabilizers and hence are not in $C$. Since the orbits of $L_{2}$ different from $(1:0:0)$, $(0:0:1)$ have cardinality equal to $a2^{b}n$, we get that $a2^{b}n\mid d=\deg(C\cap L_{2})$. Assume $a=1$. We want to prove that $2^{b+1}n\mid d(d-2)$; equivalently, $d$ is even and every prime power $q\mid 2^{b}n$ divides either $d$ or $d-2$.
	
	The point $(0:0:1)$ is distinguished and descends to a rational point of the compression $\bP_{C}$. Let $\bC\subset\bP_{C}$ be the coarse moduli space of the universal curve $\cC\subset\cP_{C}$, by the Lang--Nishimura theorem for stacks this lifts to a rational map $\cC\dashrightarrow\cP_{C}$. If $(0:0:1)\in C$, then the corresponding rational point on $\bP$ is contained in $\bC$, hence $\cP_{C}(k)\neq\emptyset$ by the Lang--Nishimura theorem for tame stacks. This implies that $C$ is defined by a homogeneous polynomial with coefficients in $k$, giving a contradiction. We may thus assume that $(0:0:1)\not\in C$.
	
	The quotient $L_{3}/\aut(C)$ descends to a non-trivial Brauer--Severi curve $\bL_{3}\subset\bP_{C}$, which induces a rational map $\bL_{3}\dashrightarrow\cP_{C}$ by the Lang--Nishimura theorem for stacks. If $\bL_{3}\simeq\PP^{1}$, then $\bP_{C}(k)\neq\emptyset$ again by the Lang--Nishimura theorem for stacks, hence we may assume $\bL_{3}(k)=\emptyset$. Notice that $\diag(-1,-1,1)\in\aut(C)$ fixes $L_{3}$, hence $C$ has normal crossing with $L_{3}$. This implies that $d=\deg (C\cap L_{3})$ is even, since $(C\cap L_{3})/\aut(C)$ descends to a divisor of $\bL_{3}$. 
	
	Fix $q\mid 2^{b}n$ a prime power. There are two cases: either $e\cong 1\pmod{q}$, or $e\cong -1\pmod{q}$. If $e\cong 1\pmod{q}$, then a generic line $L$ containing $(0:0:1)$ satisfies $L\cap L_{3}\cap C=\emptyset$, is stabilized by $\diag(\zeta_{q},\zeta_{q},1)\in\aut(C)$ and the $\left<\diag(\zeta_{q},\zeta_{q},1)\right>$-orbits of $L\cap C$ have cardinality $q$. It follows that $q\mid\deg(L\cap C)=d$.
	
	Assume $e\cong -1\pmod{q}$ (and $q\neq 2$, otherwise $e\cong 1$ too), then $\{(1:0:0),(0:1:0)\}$ is distinguished and descends to a point of $\bL_{3}$ with residue field of degree $2$. This point either is in $\bC$ or not, hence $C$ either contains both $(1:0:0)$ and $(0:1:0)$ or none. If $C$ does not contain them, the orbits of $L_{2}\cap C$ have cardinality $2^{b}n$, hence $2^{b}n\mid\deg (L_{2}\cap C)=d$. Assume that $C$ contains both.
	
	If $q$ is odd, since $e\cong -1\pmod{q}$ the degrees of the orbits of $L_{3}$ different from $(1:0:0)$, $(0:1:0)$ are multiples of $q$, hence $q$ divides $\deg(L_{3}\cap C)-2=d-2$. If $q=2^{b}$ is even, said orbits have degree multiple of $2^{b-1}$. Furthermore, the fact that $\bL_{3}$ is a non-trivial Brauer-Severi variety implies that there is an even number of orbits in $C\cap L_{3}$. This implies that $2^{b}\mid \deg(C\cap L_{3})-2=d-2$.
\end{proof}

\begin{theorem}\label{theorem:no3}
	Let $k$ be a field of characteristic $0$ with algebraic closure $K$, $C\subset\PP^{2}_{K}$ a smooth plane curve over $K$ of degree $d$ prime with $3$ and with field of moduli $k$.
	
	If $C$ is not defined by a homogeneous polynomial with coefficients in $k$, then $\aut(C)$ has the form $C_{a}\times C_{2an}$ generated by $\diag(\zeta_{a},1,1)$, $\diag(1,\zeta_{a},1)$, $\diag(\zeta_{2an},\zeta_{2an}^{e},1)$ for $a,e,n$ positive integers with $e\cong\pm 1\pmod{q}$ for each prime power $q\mid 2n$. Furthermore, if $a\neq 1$ then $2an\mid d$, while if $a=1$ then $4n\mid d(d-2)$.
\end{theorem}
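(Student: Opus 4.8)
The plan is to translate the statement into a question about the neutrality of a gerbe and then combine a cohomological analysis with the classification of automorphism groups of smooth plane curves. Since $d$ is prime with $3$, the Brauer--Severi surface $P_{\fC}$ of Roè--Xarles is isomorphic to $\PP^2_{k}$, so by Lemma~\ref{lemma:plane} the curve $C$ is defined over $k$ by a homogeneous polynomial if and only if the residual gerbe $\cG\to\spec k$ of the point $[C]$ in the coarse space of the moduli stack $\cM$ of smooth plane curves is neutral. This gerbe is banded by $G=\aut(C)\subset\PGL_3$, so once the band is understood the obstruction to neutrality is a class supported on the center $Z(G)$. The first task is to pin this obstruction down through the central extension $1\to\gm\to\tilde G\to G\to 1$ obtained by pulling $G$ back along $\GL_3\to\PGL_3$: the triviality of $P_{\fC}$ means the $\PGL_3$-torsor on $\cG$ coming from $\cM\to B\PGL_3$ lifts to $\GL_3$, and I expect the obstruction to collapse onto a $2$-torsion class in $\br(k)$, in accordance with the fact that the intersection of the fields of definition is $k_{C}(i)$.

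Next I would run through the classification of finite subgroups of $\PGL_3$ that occur as $\aut(C)$ for a smooth plane curve (Mitchell, refined by Harui). For the primitive groups (icosahedral, Klein, Valentiner, the Hessian group and its relatives) and for the non-abelian intermediate types, the goal is to show the obstruction vanishes automatically, either because the relevant part of $Z(\tilde G)$ carries no class compatible with the band, or because one can exhibit a $k$-rational form by hand. This should leave only the case where $G$ is abelian and simultaneously diagonalizable, i.e.\ conjugate into the torus of diagonal matrices. Writing such a $G$ in diagonal coordinates then produces generators of the stated shape $\diag(\zeta_a,1,1)$, $\diag(1,\zeta_a,1)$, $\diag(\zeta_{2an},\zeta_{2an}^e,1)$, and the congruence $e\equiv\pm1\pmod q$ for every prime power $q\mid 2n$ should emerge as precisely the condition under which the pushed-forward central class in $\br(k)[2]$ fails to split.

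Finally I would extract the divisibility constraints on $d$ from the existence of a smooth $G$-invariant form of degree $d$. Invariance under the diagonal action is a linear condition selecting the monomials $x^{i}y^{j}z^{\ell}$ whose weights are fixed by every character of $G$, and smoothness forces enough of these monomials to be present; a short weight count then yields $2an\mid d$ when $a\neq 1$ and $4n\mid d(d-2)$ when $a=1$. The step I expect to be the main obstacle is the cohomological reduction in the middle: proving that every non-diagonal or non-abelian automorphism group kills the obstruction, and isolating the exact arithmetic condition on $e$ that keeps the Brauer class nonzero, since this is where the interaction between the group structure of $\tilde G$ and the $2$-torsion nature of the obstruction is most delicate.
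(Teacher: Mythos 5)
Your overall frame (neutrality of the residual gerbe, classification of automorphism groups, invariant theory for the divisibility) points in the right direction, but the central step of your plan does not work as described, and you are missing two ideas that the actual proof cannot do without. First, the neutrality of a gerbe banded by a non-abelian $G$ is not controlled by a class on $Z(G)$, and there is no reason for the obstruction to ``collapse onto a $2$-torsion class in $\br(k)$'': the genuine obstructions here are non-abelian lifting problems of the form $\H^{1}(k,N)\to\H^{1}(k,N/G)$ for $N$ the normalizer of $G$, and the counterexamples live over fields such as $\CC((s))((t))$, where the failure has nothing to do with $2$-torsion Brauer classes (the $k_{C}(i)$ phenomenon concerns the intersection of fields of definition, a different question). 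The paper's positive results are not obtained by a cohomological computation at all: one finds a \emph{distinguished} $G$-orbit in $\PP^{2}_{K}$, descends its image to a rational point of the compression $\bP_{\xi}=$ coarse space of $[\PP^{2}/G]$ twisted, checks that the corresponding quotient singularity is of type $\rR$, and applies the Lang--Nishimura theorem for tame stacks to lift to $\cP_{\xi}$. Your proposal offers no substitute for this geometric mechanism, and ``exhibit a $k$-rational form by hand'' is not a proof for the primitive groups $H_{4}$, $H_{5}$, $A_{5}$, $A_{6}$, $\PSL(2,7)$.

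Second, your expectation that the obstruction ``vanishes automatically'' for all non-abelian groups is false: the Hessian groups $H_{2}$ and $H_{3}$, and the diagonal groups $C_{a}\times C_{an}$ with $d^{2}-d+1\cong 0\pmod n$ and $3\mid an$, are critical --- there exist structures with these automorphism groups not defined over the field of moduli. They are absent from the list in Theorem~\ref{theorem:no3} only because of an argument specific to smooth plane curves of degree prime with $3$: since $3$ divides the genus and the group order, Riemann--Hurwitz gives $\deg R\cong -2\pmod 3$ for the ramification divisor $R$, while an orbit analysis shows $3\mid\deg R$, a contradiction. Without this step your case analysis cannot terminate at the stated family $C_{a}\times C_{2an}$. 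Finally, the divisibility $4n\mid d(d-2)$ in the $a=1$ case is not a pure weight count on invariant monomials: it uses the non-definability hypothesis essentially, via Lang--Nishimura, to conclude that the distinguished point $(0:0:1)$ does not lie on $C$ and that the quotient of the invariant line descends to a \emph{non-trivial} Brauer--Severi curve, which forces $d$ even and an even number of orbits on $C\cap L_{3}$. A curve with the same automorphism group that \emph{is} defined over $k$ need not satisfy these constraints, so an argument that never invokes non-definability cannot prove them.
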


\begin{proof}
	Clearly, we may assume $d\ge 4$. Since $\deg C$ is prime with $3$ and $C$ is not defined by a homogeneous polynomial with coefficients in $k$, by Corollary~\ref{corollary:brauer} the abstract curve $C$ is not defined over $k$ as well.
	
	By Theorem~\ref{theorem:cycles}, $\aut(C)$ is critical. There are four types of critical subgroups of $\PGL_{3}$: we want to check that three are not possible, while the fourth is settled by Proposition~\ref{proposition:crit2}.
	
	\subsection{$\boldsymbol{C_{a}\times C_{an}}$, $\boldsymbol{3|an}$}
	
	In this case $3\mid \deg C$ by Proposition~\ref{proposition:crit1}, which contradicts our hypothesis.

	\subsection{\boldsymbol{$H_{2}, H_{3}$}}
	Recall that the degrees of $H_{2}$, $H_{3}$ are $18,36$ respectively. Observe that, since $3\nmid d$, then $3$ divides the genus $(d-1)(d-2)/2$ of $C$. Furthermore, $3$ divides $|\aut(C)|$. By Riemann-Hurwitz we have
	\[-2\cong \deg R\pmod{3},\]
	where $R\subset C$ is the ramification divisor of $C\to C/\aut(C)$. Let us show that $\deg R$ is a multiple of $3$, this gives the desired contradiction. 
	
	Let $p\in C$ be a point, since the stabilizer acts faithfully on the tangent space of $p$ then it is cyclic of order $m\ge 1$. There are no elements of order $9$ in $H_{3}$, it follows that $9\nmid m$ and hence $|\aut(C)|/m$ is a multiple of $3$, i.e. the degree of every orbit is a multiple of $3$. This clearly implies that the degree of the ramification divisor is a multiple of $3$.
\end{proof}

It might happen that some groups of the form described in Theorem~\ref{theorem:no3} are not the groups of automorphisms of any plane curve of degree $d$. For instance, if $d=4$, the groups of this form are $C_{2}$, $C_{4}$ and $C_{2}\times C_{4}$, but only $C_{2}$ can actually be the group automorphism of a plane quartic \cite{bars}. As a consequence, the result by M. Artebani and S. Quispe \cite{artebani-quispe} holds for every field of characteristic $0$.

\begin{corollary}\label{corollary:quartics}
	Let $k$ be a field of characteristic $0$ with algebraic closure $K$ and $C\subset\PP^{2}_{K}$ a smooth plane quartic over $K$ with field of moduli $k$. If $\aut(C)\neq C_{2}$, then $C$ is defined by a homogeneous polynomial with coefficients in $k$.
\end{corollary}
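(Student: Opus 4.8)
The plan is to deduce this directly from Theorem~\ref{theorem:no3} by a finite group-theoretic enumeration, closed off by the known classification of automorphism groups of smooth plane quartics. Since $d=4$ is prime with $3$, Theorem~\ref{theorem:no3} applies verbatim, so I would argue by contraposition: assuming $C$ is \emph{not} defined by a homogeneous polynomial with coefficients in $k$, the theorem forces $\aut(C)$ to be of the form $C_{a}\times C_{2an}$ subject to the stated divisibility constraints, and the goal is to show that for $d=4$ these constraints leave only the abstract groups $C_{2}$, $C_{4}$, and $C_{2}\times C_{4}$.

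Carrying out the enumeration is elementary. In the case $a\neq 1$, the constraint $2an\mid d=4$ together with $a\ge 2$ and $n\ge 1$ gives $2an\ge 4$, hence $2an=4$ and $(a,n)=(2,1)$, producing $\aut(C)\simeq C_{2}\times C_{4}$. In the case $a=1$, the constraint $4n\mid d(d-2)=8$ forces $n\mid 2$, so $n\in\{1,2\}$, giving $C_{1}\times C_{2}\simeq C_{2}$ and $C_{1}\times C_{4}\simeq C_{4}$ respectively. The congruence condition $e\equiv\pm 1\pmod q$ only constrains the particular embedding of the group into $\PGL_{3}$ as diagonal matrices $\diag(\zeta_{2an},\zeta_{2an}^{e},1)$, not the abstract isomorphism type, so it does not enlarge this list of three candidates.

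The final step invokes the classification of Bars \cite{bars}: among the three candidates $C_{2}$, $C_{4}$, $C_{2}\times C_{4}$, only $C_{2}$ actually occurs as the full automorphism group of a smooth plane quartic over an algebraically closed field of characteristic $0$. Since $\aut(C)$ is, by hypothesis, the automorphism group of the smooth plane quartic $C$, it must appear in that classification; combined with the enumeration above this forces $\aut(C)\simeq C_{2}$. As this is exactly the contrapositive of the stated implication, the corollary follows.

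I expect the only genuine content to lie in this last step, and hence to be the point demanding care: one must be sure that $C_{4}$ and $C_{2}\times C_{4}$ are truly \emph{excluded} as automorphism groups of smooth plane quartics, rather than merely rare or excluded over some special base field, so that the appeal to \cite{bars} is valid over an arbitrary algebraically closed field of characteristic $0$. The divisibility enumeration is routine and everything else is a direct specialization of Theorem~\ref{theorem:no3}, so no further obstacle is anticipated.
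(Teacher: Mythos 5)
Your proposal is correct and follows essentially the same route as the paper: specialize Theorem~\ref{theorem:no3} to $d=4$, enumerate the admissible $(a,n)$ to get the candidate groups $C_{2}$, $C_{4}$, $C_{2}\times C_{4}$, and then invoke the classification in \cite{bars} to rule out $C_{4}$ and $C_{2}\times C_{4}$ as automorphism groups of smooth plane quartics. The divisibility enumeration matches the paper's, so there is nothing further to add.
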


Since a smooth, projective curve of genus $3$ is either a plane curve or an hyperelliptic curve, and B. Huggins has completely analyzed the problem for hyperelliptic curves \cite{huggins}, this essentially completes the study of fields of moduli of genus $3$ curves in characteristic $0$.

\begin{theorem}\label{theorem:odd}
	Let $C$ be a smooth, plane curve over $K$ of odd degree.
	
	If $C$ has no models over its field of moduli, then the group $\aut(C)=\aut(\PP^{2},C)\subset\PGL_{3}$ is conjugate to the group $C_{a}\times C_{an}$ generated by $\diag(\zeta_{a},1,1)$, $\diag(1,\zeta_{a},1)$, $\diag(\zeta_{an},\zeta_{an}^{e},1)$ for some positive integers $a,e,n$ with $3\mid an$ and $e^{2}-e+1\cong 0\pmod{n}$. Such an $e$ exists if and only if $n$ is odd and $-3$ is a square modulo $n$.

	Furthermore, one of the following holds.
	\begin{itemize}
		\item $an\mid d$, or
		\item $a=1$, $n\mid d^{2}-3d+3$ and $n\neq d^{2}-3d+3$.
	\end{itemize}
\end{theorem}

\begin{proof}
	The genus of $C$ is $(d-1)(d-2)/2$, write $h$ for the genus of $C/\aut(C)$. 
	
	Clearly, we may assume $d\ge 4$. By Theorem~\ref{theorem:cycles}, $\aut(C)$ is critical. There are four types of critical subgroups of $\PGL_{3}$: we want to check that three are not possible, while the fourth is settled by Proposition~\ref{proposition:crit1}.

	\subsection{$\boldsymbol{C_{a}\times C_{a2^{b}n}}$}

		By Proposition~\ref{proposition:crit2}, in this case $\deg C$ is even, which contradicts our hypothesis.
		
	\subsection{$\boldsymbol{H_{2}}$}
		Let $\cG$ be the residual gerbe, thanks to Lemma~\ref{lemma:index} and Corollary~\ref{corollary:indexneutral} it is enough to show that the index of $\cG$ is $1$ cf. Appendix~\ref{sect:index}. Let us start by showing that it divides $4$.
		
		Each subgroup of $H_{2}$ of order $3$ has $3$ fixed points, see the matrices $M_{0}$, $M_{1}$ in \S\ref{sect:hessian}. There are $4$ such subgroups, and the fixed loci of these subgroups are pairwise disjoint (this follows easily from the presentation given in \S\ref{sect:hessian}); we call these the four special triangles. The elements of order $2$ in $H_{2}$ are all conjugate. Each of them fixes one point of each special triangle, and swaps the other two points (see the matrix $M_{2}$ in \S\ref{sect:hessian}). Hence, the stabilizer of each of the $12$ points of the $4$ special triangles is isomorphic to $S_{3}$, and each special triangle is an orbit for the action of $H_{2}$.
		
		These $12$ points form a distinguished subset, hence they descend to a $0$-cycle $Z$ of degree $4$ in the compression $\bP$ of $(\PP^{2},C)$. Since $S_{3}$ is of type $\rR_{2}$ \cite{giulio-tqs2}, the singularities of the points of $Z$ are liftable, hence we get a map $Z\to\cG$ thanks to the Lang--Nishimura theorem for stacks \cite[Theorem 4.1]{giulio-angelo-valuative}. It follows that the index of $\cG$ divides $4$. To conclude, it is enough to show that the index is odd.
		
		Consider the $9$ lines fixed by the $9$ elements of order $2$ of $H_{2}$. Any intersection point of two of these lines has non-cyclic stabilizer, since it is fixed by two elements of order $2$. In particular, these intersection points are not contained in $C$. By what we have said above, we can easily see that the intersection points of the lines are precisely the $12$ points of the special triangles. It follows that the intersection of $C$ with these lines has exactly $9d$ points divided in $9d/9=d$ orbits. These form a distinguished subset, hence they descend to a divisor of degree $d$ on the compression $\bC$ of $C$. Since we have a rational map $\bC\dashrightarrow\cG$, by the Lang--Nishimura theorem for stacks \cite[Theorem 4.1]{giulio-angelo-valuative} we get that the index of $\cG$ divides $d$, and hence it is odd.

	\subsection{$\boldsymbol{H_{3}}$}

		We already know the fixed loci of the elements of $H_{2}\subset H_{3}$. The $18$ elements of $H_{3}\setminus H_{2}$ all have order $4$, and each fixes $3$ points. The $9$ cyclic subgroups of order $4$ are all conjugate.
		
		An element $g$ of order $4$ fixes $3$ points, and none of them is also fixed by an element of order $3$ (see the matrix $M_{3}$ in \S\ref{sect:hessian}). For $2$ of these $3$ fixed points, $g$ acts with eigenvalues $(\pm i,-1)$ on the tangent space, while on the third point the eigenvalues are $(\pm i,\mp i)$. Let us call $4$-points the ones stabilized by an element of order $4$ which acts with eigenvalues $(\pm i,-1)$ on the tangent space. 
		
		Let $L$ be the line fixed by $g^{2}$: it contains two $4$-points fixed by $g$, while all the other $<g>$-orbits in $L$ have order $2$. Since $L$ has transversal intersection with $C$ and $|C\cap L|=d$ is odd, we get that $C$ contains exactly one of the two $4$-points contained in $L$. Since there are $9$ elements of order $2$, we get that $C$ contains $9$ $4$-points which form a unique $H_{3}$-orbit, since the cyclic subgroups of order $4$ are conjugate.
		
		These $9$ points thus form a distinguished subset and they descend to a rational point in the compression $\bC\subset\bP$ of $C$. By the Lang--Nishimura theorem for stacks \cite[Theorem 4.1]{giulio-angelo-valuative} applied to $\bC\dashrightarrow \cG$, we conclude that $\cG(k)\neq\emptyset$, which is a contradiction.

\end{proof}

Our method works in the case in which $d$ is a multiple of $6$, too, but the results are less clear-cut. By cross-checking Theorem~\ref{theorem:cycles} with the list of possible groups of automorphisms of plane sextics \cite{badr-bars6}, we obtain the following result for curves of degree $6$.

\begin{theorem}\label{theorem:sextics}
	Let $k$ be a field of characteristic $0$ with algebraic closure $K$ and $C\subset\PP^{2}_{K}$ a smooth plane sextic over $K$ with field of moduli $k$. If $\aut(C)$ is not isomorphic to one of the groups
	\[C_{2},~C_{3},~C_{4},~C_{6},~C_{3}^{2},~H_{2}=C_{3}^{2}\rtimes C_{2},~H_{3}=C_{3}^{2}\rtimes C_{4},\]
	then there exists a model $\fC$ of $C$ over $k$ with an embedding $\fC\subset P_{\fC}$ in a Brauer-Severi surface $P_{\fC}$ over $k$. Furthermore, if $\aut(C)$ is also not trivial and not isomorphic to $C_{2}^{2}$, we may choose $\fC$ so that $P_{\fC}\simeq\PP^{2}_{k}$.
	
	Moreover, if $\sqrt{3},\sqrt{-1}\in k$ and $\aut(C)\simeq C_{3}^{2}\rtimes C_{4}$, there exists a model $\fC$ over $k$ with and embedding $\fC\subset\PP^{2}_{k}$. 
\end{theorem}

Just as Theorem~\ref{theorem:cycles}, Theorem~\ref{theorem:six} is a particular case of a result about algebraic structures.

\begin{theorem}\label{theorem:structures}
	Let $\xi$ be an algebraic structure on $\PP^{2}_{K}$. There exists a finite extension $k'/k_{\xi}$ with $[k':k_{\xi}]\le 3$ such that $\xi$ descends to a structure on $\PP^{2}_{k'}$. If $\xi$ is the structure given by a smooth plane curve $C$, we can choose $k'$ so that $[k':k_{\xi}]=[k':k_{C}]\mid\deg C$.
\end{theorem}

\begin{proof}
	Up to base change, we may assume $k_{\xi}=k$.
	
	Assume first that $(\PP^{2}_{K},\xi)$ has a model $(P,\xi_{0})$ over $k$ with $P$ a Brauer--Severi surface. By \cite[Proposition 4.5.4]{gille-szamuely}, there exists a finite extension $k'/k$ of degree $3$ with $P_{k'}\simeq\PP^{2}_{k'}$, hence $(P,\xi_{0})_{k'}$ is the desired model over $k'$. If $\xi$ is the structure given by a smooth plane curve $C$, $\fC\subset P$ is the corresponding model and $3=[k':k]$ does not divide $\deg C$, the Brauer--Severi surface $P$ contains a divisor $\fC$ of degree prime with $3$ and hence it has index $1$, i.e. $P\simeq\PP^{2}_{k}$. We may then choose $k'=k$.
	
	Assume now that $\xi$ is not defined over $k$. Again, $\aut(\xi)$ is critical by Theorem~\ref{theorem:critical}. Let us analyze each case.
	
	\subsection{$\boldsymbol{C_{a}\times C_{an}}$, $\boldsymbol{3|an}$}
		Using the same notation as in the proof of Theorem~\ref{theorem:odd}, there exist a point $z$ in the compression $\bP$ such that $[k(z):k]=3$ and the $3$ corresponding points in $\PP^{2}_{K}/\aut(C)$ are singularities of type $\rR$. By the Lang--Nishimura theorem for stacks \cite[Theorem 4.1]{giulio-angelo-valuative} applied to $\bP\dashrightarrow \cP$, we get $\cP(k(z))\neq\emptyset$. This implies that $\xi$ descends to a structure on $\PP^{2}_{k(z)}$.
		
		Assume that $\xi$ is the structure given by a smooth plane curve $C$. Since $C$ has no models over $k$, then $3=[k(z):k]\mid \deg C$ by Proposition~\ref{proposition:crit1}. 
	
	\subsection{$\boldsymbol{C_{a}\times C_{a2^{b}n}}$}
		The line $(x:y:0)$ is distinguished, hence it descends to a Brauer--Severi curve $B$ in the compression. Hence, there exists a point $b\in B$ which is smooth in $\bP$ and such that $[k(b):k]=2$. By the Lang--Nishimura theorem for stacks \cite[Theorem 4.1]{giulio-angelo-valuative} applied to $\bP\dashrightarrow \cP$, we get $\cP(k(b))\neq\emptyset$. This implies that $C$ descends to a curve $\fC$ over $k(b)$ with an embedding $\fC\subset\PP^{2}_{k(b)}$.
		
		Assume that $\xi$ is the structure given by a smooth plane curve $C$. Since $C$ has no models over $k$, then $2=[k(b):k]\mid \deg C$ by Proposition~\ref{proposition:crit2}. 
		
	\subsection{\boldsymbol{$H_{2}, H_{3}$}}
		There are exactly $9$ elements of order $2$ in $\aut(C)$ and they are all conjugate. Each of them fixes a different line in $\PP^{2}$, these $9$ lines form a distinguished subset. Since the $9$ elements of order $2$ are conjugate, $\aut(C)$ acts transitively on the set of these $9$ lines, hence they descend to a Brauer--Severi curve $B$ in the compression. We conclude as in the previous case.
		
		Assume now that $\xi$ is the structure given by a smooth plane curve $C$. Since $C$ has no models over $k$ and $\aut(\xi)$ is either $H_{2}$ or $H_{3}$, then $2\mid \deg C$ by Theorem~\ref{theorem:odd}.
\end{proof}

\appendix

\section{The index of a gerbe}\label{sect:index}

The index of a gerbe $\cG$ is the greatest common divisor of the degrees of finite splitting fields of $\cG$. In this appendix, we prove some abstract facts about the index of a finite étale gerbe that we used in the proof of Theorem~\ref{theorem:odd}. The main results are Lemma~\ref{lemma:index} and Corollary~\ref{corollary:indexneutral}. We think these facts will be useful in future works, so we have made an effort to prove general statements, whereas a much simpler argument would have been sufficient for the application in Theorem~\ref{theorem:odd}.

Let $\cG$ be a finite étale gerbe over a field $k$ of arbitrary characteristic, write $k^{s}/k$ for a separable closure. By \cite[Lemma 4.5]{giulio-angelo-moduli}, there exists a section $b:\spec k^{s}\to\cG$. Write $G=\pi_{1}(\cG_{k^{s}},b)$, it is a finite group. We have a short exact sequence
\[1\to G \to \pi_{1}(\cG,b) \to \gal(k^{s}/k) \to 1.\]

\begin{lemma}\label{lemma:gerbequotient}
	The gerbe $\cG$ is isomorphic to the quotient stack $[\spec k^{s}/\pi_{1}(\cG,b)]$, where $\pi_{1}(\cG,b)$ acts on $\spec k^{s}$ with the projection $\pi_{1}(\cG,b)\to\gal(k^{s}/k)$.
\end{lemma}

\begin{proof}
	This is a direct consequence of the fact that the morphism $b:\spec k^{s}\to\cG$ is a universal cover which is a Galois $\pi_{1}(\cG,b)$-cover.
\end{proof}

If we have a rational point $p:\spec k\to\cG$, by functoriality we get a section $\gal(k^{s}/k)\to\pi_{1}(\cG,p)$. An étale path on $\cG_{k^{s}}$ from $p$ to $b$ defines an isomorphism $\pi_{1}(\cG,p)\simeq\pi_{1}(\cG,b)$ which commutes with the projection to $\gal(k^{s}/k)$, hence we get a section $\gal(k^{s}/k)\to\pi_{1}(\cG,b)$ well defined up to conjugation by elements of $G$ (which identifies with the group of étale paths from $b$ to itself in $\cG_{k^{s}}$). Denote by $\cS_{\cG/k}$ the set of sections $\gal(k^{s}/k)\to\pi_{1}(\cG,b)$ modulo the action of $G$ by conjugation, we have constructed a map
\[\cG(k)\to\cS_{\cG/k}.\]

Notice that $\cG(k)$ is a groupoid, whereas $\cS_{\cG/k}$ is a set. The following lemma is well-known, see e.g. \cite[Proposition 9.3]{niels-angelo15}, though the proofs available in the literature are quite technical. Let us give an elementary proof.

\begin{lemma}\label{lemma:gerbesect}
	The map $\cG(k)\to\cS_{\cG/k}$ identifies $\cS_{\cG/k}$ with the set of isomorphism classes of $\cG(k)$.
\end{lemma}

\begin{proof}
	Consider two sections $p,q:\spec k\to\cG$ with equal image in $\cS_{\cG/k}$ and write $p^{s},q^{s}$ for the two compositions $\spec k^{s}\to\spec k\to\cG$; equivalently, there is an étale path $\gamma$ from $p^{s}$ to $q^{s}$ in $\cG_{k^{s}}$ such that the natural map $\gal(k^{s}/k)\xrightarrow{q_{*}}\pi_{1}(\cG,q)$ is equal to the composition $\gal(k^{s}/k)\xrightarrow{p_{*}}\pi_{1}(\cG,q)\xrightarrow{\gamma^{*}}\pi_{1}(\cG,p)$. This means that the two natural actions of $\gal(k^{s}/k)$ on the two fiber functors associated with $p^{s}$ and $q^{s}$ are identified by $\gamma$.
	
	Consider the étale cover $p:\spec k\to \cG$, its fiber over $p^{s}$ has a preferred object which $\gamma$ maps to a point in the fiber over $q^{s}$, which by construction is the set of isomorphisms $p^{s}\simeq q^{s}$; we have thus constructed a preferred isomorphism $p^{s}\simeq q^{s}$. Now consider the fibered product $E=\spec k\times_{\cG}\spec k$, where the two maps $\spec k\to\cG$ are $p$ and $q$, it is a finite étale scheme over $k$ which represents isomorphisms between $p$ and $q$; in particular, we have a preferred point $\spec k^{s}\to E$. The fact two actions of $\gal(k^{s}/k)$ on the two fiber functors associated with $p^{s}$ and $q^{s}$ are identified by $\gamma$ implies that the preferred point $\spec k^{s}\to E$ is Galois invariant, hence it is $k$-rational and we get an isomorphism $p\simeq q$.
	
	Consider now a section $s:\gal(k^{s}/k)\to\pi_{1}(\cG,b)$. Notice that $\spec k^{s}$ is a $\gal(k^{s}/k)$-torsor over $k$, define $T/k$ as the induced $\pi_{1}(\cG,b)$-torsor
	\[\spec k^{s}\times^{\gal(k^{s}/k)}\pi_{1}(\cG,b)=(\spec k^{s}\times\pi_{1}(\cG,b))/\gal(k^{s}/k)\]
	using $s:\gal(k^{s}/k)\to\pi_{1}(\cG,b)$. Since $s$ is a section of $\pi_{1}(\cG,b)\to\gal(k^{s}/k)$, the induced torsor $T\times^{\pi_{1}(\cG,b)}\gal(k^{s}/k)$ is again $\spec k^{s}$, hence we get an equivariant map $T\to\spec k^{s}$. By definition of quotient stack, this gives a point $p:\spec k\to[\spec k^{s}/\pi_{1}(\cG,b)]=\cG$. It is straightforward to check that the Galois section associated with $p$ is equivalent to $s$.
\end{proof}

Because of Lemma~\ref{lemma:gerbesect}, finding a rational point of $\cG$ is equivalent to splitting the associated short exact sequence of fundamental groups.

Recall that the degree of $\cG$ is the degree of the automorphism group of any geometric point.

\begin{lemma}\label{lemma:index}
	Let $\cG$ be a finite étale gerbe over $k$. The prime factors of the index of $\cG$ divide the degree of $\cG$. 
\end{lemma}

\begin{proof}
	We have that $\cG_{k^{s}}\simeq\cB_{k^{s}}G$ with $G=\pi_{1}(\cG_{k^{s}})$. Since $G$ is finite, there exists a normal, finite index subgroup of $\pi_{1}(\cG)$ with trivial intersection with $G\subset\pi_{1}(\cG)$. Because of this, there are finite groups $H$, $Q$ with a commutative diagram of short exact sequences
	\[\begin{tikzcd}
		1\rar	&	G\rar\dar[equal]	&	\pi_{1}(\cG)\rar\dar	&	\gal(k^{s}/k)\rar\dar	&	1	\\
		1\rar	&	G\rar				&	H\rar					&	Q\rar					&	1	
	\end{tikzcd}\]
	where the vertical arrows are surjective. In particular, $\pi_{1}(\cG)$ identifies with the fibered product $H\times_{Q}\gal(k^{s}/k)$.
	
	Let $p$ be a prime which does not divide $|G|$ and choose $P\subset H$ a $p$-Sylow subgroup of $G$; then $P\hookrightarrow Q$ is injective and defines a $p$-Sylow of $Q$. Let $k'\subset k^{s}$ be the fixed field of the inverse image of $P$ in $\gal(k^{s}/k)$, we get an induced map $\gal(k^{s}/k')\to\pi_{1}(\cG)$. By Lemma~\ref{lemma:gerbesect}, $\cG(k')\neq\emptyset$, hence $p$ does not divide the index of $\cG$ since by construction $p\nmid [k':k]=[Q:P]$.
\end{proof}

\begin{remark}\label{remark:index}
	If $G$ is abelian, there is a natural action of $\gal(k^{s}/k)$ on $G$ and $\cG$ corresponds to an element $g$ of $\H^{2}(\gal(k^{s}/k),G)$, see for instance \cite[Chapitre IV, \S 3.4]{giraud}, and Lemma~\ref{lemma:index} follows from the fact that the order of $g$ divides the index of $\cG$ (this can be seen by taking restriction and corestriction along the splitting fields of $\cG$). The case in which $G$ is solvable then follows by an easy induction argument.
\end{remark}

The following result is due to L. A. \v{S}emetkov and B. Sambale, see \cite{semetkov} \cite[Theorems 2, 3 and 9]{sambale}.

\begin{theorem}[{\v{S}emetkov, Sambale}]\label{theorem:sambale}
	Consider a short exact sequence of finite groups
	\[1\to G\to H \to Q \to 1.\]
	For every prime $p$ dividing $|G|$, assume that there exists a $p$-Sylow subgroup $P\subset Q$ with a section $P\to H$. Furthermore, assume either that
	\begin{itemize}
		\item the Sylow subgroups of $G$ are abelian, or
		\item $[G,G]$ is abelian and intersects trivially the center of $G$.
	\end{itemize}
	Then there is a section $Q\to H$.
\end{theorem}

The case in which the Sylow subgroups of $G$ are abelian is a direct consequence of \v{S}emetkov's theorem \cite[Theorem 3]{sambale}. The second case is essentially \cite[Theorem 9]{sambale} with a weakened hypothesis; it is straightforward to check that the proof still works with the weakened hypothesis.

\begin{corollary}\label{corollary:indexneutral}
	Let $\cG$ be a finite étale gerbe over $k$, write $G$ for the automorphism group of a geometric point. Assume either that
	\begin{itemize}
		\item the Sylow subgroups of $G$ are abelian, or
		\item $[G,G]$ is abelian and intersects trivially the center of $G$.
	\end{itemize}
	The index of $\cG$ is $1$ if and only if $\cG$ is neutral.
\end{corollary}

\begin{proof}
	The ``if'' part is obvious, assume that the index is $1$. As in the proof of Lemma~\ref{lemma:index}, there exists a commutative diagram of short exact sequences
	\[\begin{tikzcd}
		1\rar	&	G\rar\dar[equal]	&	\pi_{1}(\cG)\rar\dar	&	\gal(k^{s}/k)\rar\dar	&	1	\\
		1\rar	&	G\rar				&	H\rar					&	Q\rar					&	1	
	\end{tikzcd}\]
	with $H, Q$ finite, and we get an identification $\pi_{1}(\cG)\simeq H\times_{Q}\gal(k^{s}/k)$.
	
	Since $\cG$ has index $1$, for every prime $p$ dividing $|G|$ there exists a finite extension $k'/k$ with $p \nmid [k':k]$ and a section $\gal(k'^{s}/k')\to\pi_{1}(\cG)$. While $k'/k$ might be not separable, the natural map $\gal(k'^{s}/k')=\aut(\bar{k}/k')\to\gal(k^{s}/k)=\aut(\bar{k}/k)$ is injective. This implies that the image of $\gal(k'^{s}/k')\to\pi_{1}(\cG)$ does not intersect $G$. 
	
	Hence, up to replacing $Q$ with a larger quotient of $\gal(k^{s}/k)$, we can assume that the image of the composition $\gal(k'^{s}/k')\to \pi_{1}(\cG)\to H$ maps injectively in $Q$. Furthermore, we can assume that this holds for every prime $p$ dividing $|G|$. Since $p\nmid[k':k]$, the image of $\gal(k'^{s}/k')\to Q$ contains a $p$-Sylow of $Q$, hence the hypothesis of Theorem~\ref{theorem:sambale} is satisfied. We thus get a section $Q\to H$ which induces a section $\gal(k^{s}/k)\to\pi_{1}(\cG)$. We conclude by Lemma~\ref{lemma:gerbesect}.
\end{proof}

\begin{remark}
	If $G$ is abelian, Corollary~\ref{corollary:indexneutral} can be proved by an easy argument with cohomology similarly to Remark~\ref{remark:index}. The case in which $G$ is an iterated semi-direct product of abelian groups with pairwise coprime degrees follows by an easy induction argument, and this is sufficient for the present article. Still, for future applications it is better to have the more general result, and as far as we know this requires using group theory and the theorems of \v{S}emetkov and Sambale.
\end{remark}

\section{The intersection of the fields of definition}

There is some confusion in the literature about the relation between the field of moduli and the intersection of the fields of definition. In the context of fields of moduli of Galois coverings \cite[Proposition 2.7]{coombes-harbater}, K. Coombes and D. Harbater introduced the idea of using the Artin--Schreier theorem to prove that, if the base field is $\QQ$, the field of moduli is the intersection of the fields of definitions. 

The aim of this brief appendix is to clarify the relation between the two concepts in general. Denote by $\zeta_{n}$ a primitive $n$-th root of the unity, and by $\QQ(\zeta_{\infty})$ the field generated over $\QQ$ by all roots of the unity.

\begin{lemma}\label{lemma:galoisgen}
	Let $k$ be any field with separable closure $k^{s}$. The following are equivalent.
	
	\begin{enumerate}[(i)]
		\item The group $\gal(k^{s}/k)$ is not topologically generated by closed subgroups of the form $\ZZ_{p}$ for some $p$.
		\item The group $\gal(k^{s}/k)$ is not topologically generated by closed subgroups of cohomological dimension $\le 1$.
		\item The group $\gal(k^{s}/k)$ has the form $A\rtimes C_{2}$, where $A$ is abelian and $C_{2}$ acts by $-1$ on $A$.
		\item The characteristic is $0$, $i\not\in k$, $k$ contains a copy of $\QQ(\zeta_{\infty})\cap\RR$ and $\gal(\bar{k}/k(i))$ is abelian.
	\end{enumerate}
\end{lemma}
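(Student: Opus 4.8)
The plan is to treat $G=\gal(k^s/k)$ as a profinite group and let the Artin--Schreier theorem do the heavy lifting: its only finite subgroups are trivial or of order $2$, every involution $\tau\in G$ has real closed fixed field $(k^s)^\tau$, acts as complex conjugation, and therefore sends every root of unity to its inverse, i.e. $\chi(\tau)=-1$, where $\chi\colon G\to\hat\ZZ^{\times}$ is the cyclotomic character. I would prove the four statements equivalent by a cycle, using $\chi$ as the dictionary between the purely group--theoretic conditions (i)--(iii) and the field--theoretic condition (iv); recall that $\QQ(\zeta_\infty)\cap\RR\subseteq k$ is exactly the condition $\operatorname{im}\chi\subseteq\{\pm1\}$.

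First the clean implications. For (i)$\Leftrightarrow$(ii): a subgroup $\cong\ZZ_p$ has $\operatorname{cd}\le1$, so ``generated by $\ZZ_p$'s'' implies ``generated by $\operatorname{cd}\le1$ subgroups''; conversely any closed subgroup $H$ with $\operatorname{cd}(H)\le1$ is torsion--free (a nontrivial finite group has infinite cohomological dimension), hence each $\overline{\langle h\rangle}$ is a torsion--free procyclic group $\cong\prod_{p\in S}\ZZ_p$ and is generated by copies of $\ZZ_p$, so $H$ itself is. For (iii)$\Rightarrow$(iv): if $G=A\rtimes\langle\sigma\rangle$ with $\sigma$ acting by $-1$, then for every $a\in A$ the element $a\sigma$ squares to $a\,\sigma a\sigma^{-1}=aa^{-1}=1$, so every element outside $A$ is an involution; Artin--Schreier gives $\chi(a\sigma)=-1$ for all $a$, hence $\chi(a)=\chi(a\sigma)\chi(\sigma)^{-1}=1$. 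Thus $\operatorname{im}\chi=\{\pm1\}$, so $\QQ(\zeta_\infty)\cap\RR\subseteq k$; moreover $A$ fixes all roots of unity, in particular $i$, so $A\subseteq\gal(k^s/k(i))$ and, both having index $2$, $A=\gal(k^s/k(i))$ is abelian, while $\sigma(i)=i^{-1}\neq i$ gives $i\notin k$, and characteristic $0$ comes from the existence of $\sigma$. The same observation yields (iii)$\Rightarrow$(ii) for free: every $\operatorname{cd}\le1$, hence torsion--free, closed subgroup avoids the involutions of $G\setminus A$ and so lies in $A\subsetneq G$, whence such subgroups cannot generate $G$.

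The substance is in the converse directions. For (ii)$\Rightarrow$(iii) I would let $G_0$ be the closed subgroup generated by all torsion--free closed subgroups; assuming $\neg$(ii), $G_0\neq G$. The engine is that two distinct involutions $\tau_1,\tau_2$ generate an infinite dihedral group: were $\tau_1\tau_2$ of finite order $m$, then $\langle\tau_1,\tau_2\rangle$ would be a finite subgroup of order $2m$, forbidden by Artin--Schreier unless $m=1$; so $\tau_1\tau_2$ has infinite order and its (torsion--free) procyclic closure lies in $G_0$. From this one shows that all involutions lie in a single coset $G_0\tau$, that $G_0$ is torsion--free, and that $G/G_0=\langle\bar\tau\rangle\cong C_2$, so $G=G_0\rtimes\langle\tau\rangle$. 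For (iv)$\Rightarrow$(iii) I would use that $k(i)\supseteq\QQ(\zeta_\infty)$ contains all roots of unity and that $A=\gal(k^s/k(i))$ is abelian, so $k^s=k(i)^{\mathrm{ab}}$ is obtained by Kummer theory and $A\cong\operatorname{Hom}_{\mathrm{cont}}(k(i)^{\times},\hat\ZZ(1))$; the nontrivial element of $\gal(k(i)/k)$ acts on roots of unity by $-1$, and tracing this through the Kummer pairing should identify the conjugation action on $A$ with inversion.

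The main obstacle is exactly the last point in each converse: showing that the complementary involution $\tau$ acts on $G_0$ (resp. on $A$) by inversion, equivalently that every element of the coset $G_0\tau$ is again an involution and that $G_0$ is abelian. On the group--theoretic side one must rule out an element $\tau g$ of infinite order whose procyclic closure carries $2$-torsion, and on the Kummer side one must show that the norm--one subgroup $\{\sigma(x)/x\}$ of $k(i)^{\times}$ is infinitely divisible; both are where the real--closedness supplied by Artin--Schreier (no finite subgroups beyond $C_2$, real closed fixed fields) has to be pushed hardest, and I expect this to be the delicate core of the argument, with the remaining implication needed to close the cycle then following formally.
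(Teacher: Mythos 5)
Your easy implications are fine, and your overall architecture (a cycle through (i)$\Leftrightarrow$(ii), (ii)$\Rightarrow$(iii), (iii)$\Rightarrow$(iv), (iv)$\Rightarrow$(iii), (iii)$\Rightarrow$(ii), with the hard work placed in the purely group-theoretic implication (ii)$\Rightarrow$(iii) rather than in the paper's field-theoretic (i)$\Rightarrow$(iv)) would close the lemma. But both substantive implications are left exactly at the point where the work happens, and the missing ingredient is the same in both: the rigidity of real closed fields, equivalently the fact that an involution $\tau$ in an absolute Galois group is its own centralizer. Concretely: if $\sigma$ commutes with $\tau$, then $\sigma$ stabilizes the real closed field $L=(k^{s})^{\tau}$ and preserves its unique ordering; since $L$ is algebraic over the fixed field of $\sigma|_{L}$, every $\sigma$-orbit in $L$ is finite, and an order-preserving permutation of a finite totally ordered set is the identity, so $\sigma|_{L}=\id$ and $\sigma\in\{1,\tau\}$. (The paper makes essentially this move when it argues that $L$ admits no non-trivial continuous automorphisms, in the step forcing $\sigma^{2}=1$.) Without it your (ii)$\Rightarrow$(iii) does not close: the product of two distinct involutions does generate a torsion-free procyclic group (an involution in its closure would commute with $\tau_{1}$ and produce a subgroup of order $4$, killed by Artin--Schreier), but to show that every element of $G\setminus G_{0}$ is an involution you must exclude $g$ of infinite order with $\overline{\langle g\rangle}\cong C_{2}\times P$, $P$ torsion-free and non-trivial --- and that is precisely a non-trivial element centralizing an involution. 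You name this obstacle yourself but do not resolve it.

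The same gap reappears in (iv)$\Rightarrow$(iii). The Kummer computation can be done directly, as in the paper: for $a$ with $a^{n}\in k(i)$ and $g\in A$ one writes $g\sigma(a)=\zeta_{n}^{d}\sigma(a)$ and computes $\sigma^{-1}g\sigma(a)=\sigma^{-1}(\zeta_{n}^{d})\,a=\zeta_{n}^{-d}a=g^{-1}(a)$, using that $\sigma$ inverts roots of unity because $\QQ(\zeta_{\infty})\cap\RR\subseteq k$; there is no need for your detour through infinite divisibility of the norm-one subgroup. But this only identifies the conjugation action with inversion, whereas statement (iii) also asserts that the extension $1\to A\to\gal(k^{s}/k)\to C_{2}\to 1$ splits. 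That step is short but cannot be skipped: applying the inversion relation to $g=\sigma^{2}\in A$ gives $\sigma^{4}=1$, and Artin--Schreier (no elements of order $4$ in an absolute Galois group) then forces $\sigma^{2}=1$. Your sketch addresses neither the splitting nor the rigidity input, so as written the two implications carrying the content of the lemma are not proved.
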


\begin{proof}
	$(iv)\Rightarrow (iii)$ Assuming $(iv)$, we have that $\gal(\bar{k}/k)$ is an extension of $C_{2}$ by an abelian pro-finite group $A=\gal(\bar{k}/k(i))$. We have to show that the extension is split and that $C_{2}$ acts by $-1$ on $A$. Let $\sigma\in\gal(\bar{k}/k)$ be any element such that $\sigma(i)=-i$; since $k$ contains $\QQ(\zeta_{\infty})\cap\RR$, then $\sigma$ restricts to the only non trivial element of $\gal(\QQ(\zeta_{\infty})/\QQ(\zeta_{\infty})\cap\RR)$, i.e. complex conjugation.
	
	 Since $\cha k=0$, $k(i)$ contains all roots of unity and $\gal(\bar{k}/k(i))$ is abelian, by Kummer theory $\bar{k}$ is generated over $k(i)$ by elements $a$ such that $a^{n}\in k(i)$ for some $n$. Consider such an element $a\in \bar{k}$, $g$ any element of $\gal(\bar{k}/k(i))$ and $\zeta_{n}\in k(i)$ a primitive $n$th root of $1$. Since $\sigma(a)^{n}=\sigma(a^{n})\in k(i)$, there exists some $d$ such that $g\sigma(a)=\zeta^{d}_{n}\sigma(a)$. Hence,
	\[\sigma^{-1}g\sigma(a)=\sigma^{-1}\zeta_{n}^{d}\sigma(a)=\zeta_{n}^{-d}a=g^{-1}(a),\]
	which implies that $\sigma^{-1}g\sigma=g^{-1}$, i.e. $C_{2}$ acts by $-1$ on $\gal(\bar{k}/k(i))$. In particular, for $g=\sigma^{2}$ we get the equality $\sigma^{4}=1$. By Artin--Schreier, this implies that $\sigma^{2}=1$ and hence $\gal(\bar{k}/k)\to C_{2}$ is split.
	
	$(iii)\Rightarrow (ii)$ If $(a,1)\in A\rtimes C_{2}$ is an element over the non-trivial element $1$ of $C_{2}$, then $(a,1)^{2}=(a-a,1+1)=(0,0)$, i.e. every element over $1\in C_{2}$ is $2$-torsion. Since a group with non-trivial torsion has infinite cohomological dimension, then $A\rtimes C_{2}$ cannot be generated by closed subgroups of cohomological dimension $\le 1$.
	
	$(ii)\Rightarrow (i)$ The group $\ZZ_{p}$ has cohomological dimension $1$ for every $p$.
	
	$(i)\Rightarrow (iv)$ If the characteristic is not $0$, or if $\cha k=0$ and $i\in k$, by Artin--Schreier $\gal(k^{s}/k)$ is torsion free and hence it is clearly generated by closed subgroups of the form $\ZZ_{p}$ for some $p$. We can thus assume that $\cha k=0$ and $i\not \in k$. By the preceding case, the index $2$ subgroup $\gal(\bar{k}/k(i))$ is generated by closed subgroups of the form $\ZZ_{p}$ for some $p$. Since this is false for $\gal(\bar{k}/k)$, we get that $\gal(\bar{k}/k)$ does not contain a closed subgroup of the form $\ZZ_{2}$ which acts non-trivially on $k(i)$.
	
	Let $\sigma\in\gal(\bar{k}/k)$ be an element such that $\sigma(i)=-i$; I claim that $\sigma^{2}=1$. Let $h\subset \bar{k}$ be the field fixed by $\sigma$. The Galois group $\gal(\bar{k}/h)$ is pro-finite abelian, hence by Artin--Schreier it contains either a copy of $C_{2}$ or a copy of $\ZZ_{2}$ which acts non-trivially on $k(i)$, and as we have seen the latter is not possible. By \cite[Satz 1.13]{geyer}, this implies that $\gal(k/h)\simeq C_{2}$, i.e. $\sigma\in \gal(k/h)$ is the only non-trivial element and it has order $2$.
	
	If $g$ is an element of $\gal(\bar{k}/k(i))$, then $\sigma g$ has order $2$ by the above, equivalently $\sigma g \sigma^{-1}= g^{-1}$. This implies that $g\mapsto g^{-1}$ is an automorphism of $\gal(\bar{k}/k(i))$, i.e. $A=\gal(\bar{k}/k(i))$ is abelian.
	
	Since $\sigma^{2}=1$, the fixed field $L$ is real closed and hence contains a copy of $\QQ(\zeta_{\infty})\cap\RR$. By adding $i$, we get an embedding $\QQ(\zeta_{\infty})\subset \bar{k}$ such that $\sigma$ acts by complex conjugation on $\QQ(\zeta_{\infty})$. It remains to show that $\QQ(\zeta_{\infty})\cap\RR\subset L$ is contained in $k$, or equivalently that the image $G$ of $\gal(\bar{k}/k)\to\gal(\QQ(\zeta_{\infty})/\QQ)$ is generated by complex conjugation.
	
	Notice that the abelianization of $A\rtimes C_{2}$ is $A/2\times C_{2}$, which is $2$-torsion. Since $\gal(\QQ(\zeta_{\infty})/\QQ)$ is abelian, it follows that $G$ is an abelian group whose elements have order $2$. If the image $G$ is not generated by complex conjugation, there exists an element $\omega\in G$ different from complex conjugation and with $\omega(i)=-i$. 
	
	A torsion element of $\gal(\QQ(\zeta_{\infty})/\QQ)$ is determined by its images in $\gal(\QQ(\zeta_{p})/\QQ)$ for $p$ odd prime plus the image in $\gal(\QQ(i)/\QQ)$, since $\gal(\QQ(\zeta_{p},\zeta_{p^{2}},\dots)/\QQ(\zeta_{p}))$ and $\gal(\QQ(i=\zeta_{4},\zeta_{8},\dots)/\QQ(i))$ are torsion free. Since $\omega$ is $2$-torsion and $\gal(\QQ(\zeta_{p})/\QQ)$ is cyclic, then $\omega(\zeta_{p})$ is either $\zeta_{p}$ or $\zeta_{p}^{-1}$. Since $\bar{\zeta_{p}}=\zeta_{p}^{-1}$, $\omega(i)=\bar{i}=-i$ and $\omega$ is different from complex conjugation, it follows that there exists at least one odd prime $p$ with $\omega(\zeta_{p})=\zeta_{p}$.
	
	Let $\omega'\in\gal(\bar{k}/k)$ be any lifting of $\omega$. Since $\omega'(i)=-i$, then $\omega'\in \gal(\bar{k}/k)=A\rtimes C_{2}$ has the form $(a,1)$ for some $a\in A$, hence $\omega'^{2}=(a,1)^{2}=(a-a,1+1)=(0,0)$. It follows that the field fixed by $\omega'$ is real closed and contains $\zeta_{p}$, which is absurd. This completes the proof of the equivalence of the conditions.
\end{proof}

Examples of fields satisfying the conditions of Lemma~\ref{lemma:galoisgen} are $\RR$ and $\RR((t))$.

The following is a direct consequence of Lemma~\ref{lemma:galoisgen} and \cite[Proposition 4.3]{giulio-angelo-moduli}. Recall that a group is virtually abelian if it has a finite index abelian subgroup.

\begin{proposition}\label{proposition:fieldint}
	Let $k$ be a field with algebraic closure $\bar{k}$. Let $(X,\xi)$ be an algebraic space of finite type over $\bar{k}$ with a structure in the sense of \cite{giulio-angelo-moduli}, assume that $\underaut_{\bar{k}}(\xi)$ is finite and reduced and denote by $k_{\xi}$ the field of moduli. 
	
	If the intersection of the fields of definition of $(X,\xi)$ is not $k_{\xi}$, then it is $k_{\xi}(i)$ and $k_{\xi}$ satisfies the equivalent conditions of Lemma~\ref{lemma:galoisgen}. In particular, $\cha k=0$ and $\gal(\bar{k}/k)$ is virtually abelian.
\end{proposition}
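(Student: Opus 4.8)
The plan is to combine the group-theoretic dichotomy of Lemma~\ref{lemma:galoisgen} with the cohomological input of \cite[Proposition 4.3]{giulio-angelo-moduli}, applying the lemma \emph{twice}: once to $k_{\xi}$ and once to $k_{\xi}(i)$. First I would rename, replacing $k$ by $k_{\xi}$, so that the field of moduli is the base field itself; since the field of moduli is contained in every field of definition, the intersection $I$ of the fields of definition satisfies $k_{\xi}\subseteq I$. The residual gerbe $\cG_{\xi}$ over $k_{\xi}$ is banded by the finite étale group $\underaut_{\bar{k}}(\xi)$, and the fact I would invoke from \cite[Proposition 4.3]{giulio-angelo-moduli} is that any extension $k'/k_{\xi}$ over which $\gal(\bar{k}/k')$ has cohomological dimension $\le 1$ neutralizes the finite-banded gerbe $\cG_{\xi}$, and hence is a field of definition. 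Consequently $I\subseteq(\bar{k})^{N}$, where $N\subseteq\gal(\bar{k}/k_{\xi})$ is the closed subgroup topologically generated by all closed subgroups of cohomological dimension $\le 1$.

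Next I would run the dichotomy. If $\gal(\bar{k}/k_{\xi})$ \emph{is} topologically generated by its subgroups of cohomological dimension $\le 1$, then $N$ is everything and $(\bar{k})^{N}=k_{\xi}$, so the sandwich $k_{\xi}\subseteq I\subseteq(\bar{k})^{N}=k_{\xi}$ forces $I=k_{\xi}$. So assume $I\neq k_{\xi}$; then $N\neq\gal(\bar{k}/k_{\xi})$, which is exactly condition (ii) of Lemma~\ref{lemma:galoisgen} for $k_{\xi}$. The lemma then supplies all the exceptional structure at once: $\cha k=0$, $i\notin k_{\xi}$, and $\gal(\bar{k}/k_{\xi})\simeq A\rtimes C_{2}$ with $A=\gal(\bar{k}/k_{\xi}(i))$ abelian and $C_{2}$ acting by $-1$. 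In particular $\gal(\bar{k}/k_{\xi})$ is virtually abelian.

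It remains to pin down $I$ exactly. Here I would apply Lemma~\ref{lemma:galoisgen} to the field $k_{\xi}(i)$: since $i\in k_{\xi}(i)$, condition (iv) fails for $k_{\xi}(i)$, so by the equivalence condition (i) fails as well, meaning that $A=\gal(\bar{k}/k_{\xi}(i))$ \emph{is} topologically generated by closed subgroups isomorphic to $\ZZ_{p}$. Each such $\ZZ_{p}$ has cohomological dimension $1$ and is a closed subgroup of $\gal(\bar{k}/k_{\xi})$, so it is among the generators of $N$; therefore $A\subseteq N$ and $(\bar{k})^{N}\subseteq(\bar{k})^{A}=k_{\xi}(i)$. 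Combining with the previous paragraph yields $k_{\xi}\subsetneq I\subseteq k_{\xi}(i)$, and since $[k_{\xi}(i):k_{\xi}]=2$ this forces $I=k_{\xi}(i)$, completing the proof.

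The only real subtlety, and the main obstacle, is this double use of the lemma with opposite conclusions: over $k_{\xi}$ the Galois group fails to be generated by low-dimensional subgroups, which is what produces the exceptional $A\rtimes C_{2}$ shape, whereas over the quadratic extension $k_{\xi}(i)$ that failure disappears and the abelian part $A$ \emph{is} generated by copies of $\ZZ_{p}$. It is precisely this reappearance of cohomological dimension $\le 1$ after adjoining $i$ that collapses $I$ down to the single quadratic extension $k_{\xi}(i)$, rather than to something strictly smaller. The genuinely analytic content, that a finite-banded gerbe is neutral over a field of cohomological dimension $\le 1$, is black-boxed in \cite[Proposition 4.3]{giulio-angelo-moduli} and need not be reproved here.
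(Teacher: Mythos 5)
Your argument is correct and is exactly the paper's intended proof: the paper gives no details, stating only that the proposition is ``a direct consequence of Lemma~\ref{lemma:galoisgen} and \cite[Proposition 4.3]{giulio-angelo-moduli}'', and your write-up supplies precisely that deduction (the sandwich $k_{\xi}\subseteq I\subseteq(\bar k)^{N}$, the dichotomy from condition (ii), and the second application of the lemma to $k_{\xi}(i)$ to pin down $I=k_{\xi}(i)$). The initial renaming $k\rightsquigarrow k_{\xi}$ also correctly resolves the ``in particular'' clause about $\gal(\bar k/k)$.
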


\begin{corollary}\label{corollary:exception}
	Assume that $\underaut(X,\xi)$ is finite and reduced. If any of the following assumptions holds, the field of moduli is the intersection of the fields of definitions.
	\begin{itemize}
		\item $\cha k\neq 0$.
        \item $\cha k= 0$ and $\gal(\bar{k}/k)$ is not virtually abelian.
		\item $-1$ is a square in $k$.
	\end{itemize}
\end{corollary}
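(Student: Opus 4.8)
The plan is to reduce everything to Proposition~\ref{proposition:fieldint}. If the field of moduli $k_\xi$ were not the intersection of the fields of definition, then by that proposition $k_\xi$ would satisfy the four equivalent conditions of Lemma~\ref{lemma:galoisgen}, and I will contradict this under each assumption, working mostly with the concrete condition~(iv): $\cha k_\xi=0$, $i\notin k_\xi$, $\QQ(\zeta_\infty)\cap\RR\subseteq k_\xi$, and $\gal(\bar k/k_\xi(i))$ abelian. Throughout I use that $[k_\xi:k]$ is finite, since $k_\xi$ is the residue field of the coarse space of the finite residual gerbe $\cG_\xi$. Three of the four assumptions are then immediate. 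If $\cha k\neq0$ then $\cha k_\xi\neq0$, contradicting the characteristic-$0$ clause of~(iv). If $-1$ is a square in $k$, then $i\in k\subseteq k_\xi$, contradicting $i\notin k_\xi$. If $k$ is finitely generated over $\QQ$ or over $\QQ_p$, then so is the finite extension $k_\xi$; but $\QQ(\zeta_\infty)\cap\RR$ is of infinite degree over $\QQ$ and embeds in no finite extension of $\QQ_p$, whereas the relative algebraic closure of $\QQ$ (resp.\ of $\QQ_p$) inside a finitely generated field is finite, so $k_\xi$ cannot contain it, again contradicting~(iv).

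The remaining assumption, $\cha k=0$ and $\gal(\bar k/k)\not\simeq A\rtimes C_2$ with $A$ abelian, is the substantial one, and by contraposition it suffices to descend the semidirect-product structure from the field of moduli to the base field: if $\gal(\bar k/k_\xi)\simeq A\rtimes C_2$ (condition~(iii) for $k_\xi$, with $C_2$ acting by $-1$), then already $\gal(\bar k/k)\simeq A\rtimes C_2$. First I would extract the geometry behind condition~(iii): the order-$2$ element $\sigma$ realizing the semidirect product lies in $\gal(\bar k/k_\xi)\subseteq\gal(\bar k/k)$ and fixes $k_\xi\supseteq k$, so by Artin--Schreier its fixed field $L$ is real closed with $\bar k=L(i)$; as $L$ is the real closure of $k_\xi$ it is algebraic over $k_\xi$, hence over $k$. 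Thus $k\subseteq L$ is formally real, $i\notin k$, and $\gal(\bar k/k)=\gal(\bar k/k(i))\rtimes\langle\sigma\rangle$.

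It remains --- and this I expect to be the crux --- to prove that $\gal(\bar k/k(i))$ is abelian with $\sigma$ acting by $-1$, equivalently that every element of $\gal(\bar k/k)$ sending $i$ to $-i$ is an involution. Here I would use that $\gal(\bar k/k_\xi(i))$ is an \emph{open} abelian subgroup of $\gal(\bar k/k(i))$ on which $\sigma$ already acts by $-1$, and invoke Artin--Schreier --- the only torsion in these Galois groups is the $2$-torsion coming from orderings --- to exclude infinite-order elements in the nontrivial coset; equivalently, one shows that being topologically generated by closed subgroups isomorphic to $\ZZ_p$, which holds for $\gal(\bar k/k)$ by the hypothesis together with the equivalence (i)$\Leftrightarrow$(iii) of Lemma~\ref{lemma:galoisgen}, is inherited by the open subgroup $\gal(\bar k/k_\xi)$, contradicting that $k_\xi$ satisfies the conditions of the lemma. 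The delicate point is precisely that finiteness of $[k_\xi:k]$ must be leveraged to force the descent of the involutive structure, since neither abelianness of $\gal(\bar k/k(i))$ nor the inclusion $\QQ(\zeta_\infty)\cap\RR\subseteq k$ is formally automatic from the corresponding facts over $k_\xi$. Once the semidirect structure descends to $k$, the hypothesis $\gal(\bar k/k)\not\simeq A\rtimes C_2$ is contradicted and the proof is complete.
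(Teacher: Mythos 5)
Your handling of the first, third, and fourth bullets is correct and is the intended direct argument: by Proposition~\ref{proposition:fieldint}, if the field of moduli were not the intersection of the fields of definition, then $k_{\xi}$ would satisfy condition (iv) of Lemma~\ref{lemma:galoisgen}, and since $[k_{\xi}:k]<\infty$, each of $\cha k\neq 0$, $i\in k$, and finite generation over $\QQ$ or $\QQ_{p}$ contradicts one of its clauses ($\cha k_{\xi}=0$, $i\notin k_{\xi}$, $\QQ(\zeta_{\infty})\cap\RR\subseteq k_{\xi}$, the last because the relative algebraic closure of $\QQ$ or $\QQ_{p}$ in a finitely generated extension is a finite extension).

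The gap is in the second bullet, exactly at the point you yourself flag as the crux. Proposition~\ref{proposition:fieldint} gives the decomposition $\gal(\bar k/k_{\xi})\simeq A'\rtimes C_{2}$ for the field of moduli only; to contradict the hypothesis you must produce an abelian-by-$C_{2}$ decomposition of $\gal(\bar k/k)$ itself, and after your (correct) reduction $\gal(\bar k/k)=\gal(\bar k/k(i))\rtimes\langle\sigma\rangle$ this means proving that $\gal(\bar k/k(i))$ is abelian. Neither of your two suggested routes does this. First, Artin--Schreier says that every torsion element of an absolute Galois group is an involution (equivalently, every finite subgroup has order at most $2$); it provides no mechanism for showing that a given element of the coset $\sigma\,\gal(\bar k/k(i))$ \emph{is} torsion. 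Since ``every element of $\sigma\,\gal(\bar k/k(i))$ is an involution'' is equivalent to ``$\sigma$ inverts every element of $\gal(\bar k/k(i))$'', which is in turn equivalent to the abelianness you are trying to prove, invoking Artin--Schreier here is circular. Second, the claim that ``topologically generated by closed subgroups isomorphic to $\ZZ_{p}$'' is inherited by open subgroups is not a formal property of profinite groups --- intersecting a generating family of subgroups with an open subgroup need not yield a generating family --- and you give no field-theoretic argument to substitute for it; the only statement that does pass formally to $\gal(\bar k/k)$ is the Proposition's ``virtually abelian'', which is strictly weaker than ``$A\rtimes C_{2}$ with $A$ abelian'' and does not contradict the hypothesis. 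Note that the paper states the corollary without proof, and the second bullet is indeed immediate in the case $k=k_{\xi}$ (the normalization used throughout the body of the paper, where the base field is the field of moduli); in the general setting in which you correctly place yourself, with $k\subsetneq k_{\xi}$ allowed, the descent of the semidirect-product structure from $k_{\xi}$ to $k$ is a genuine assertion about absolute Galois groups, and your proposal leaves it unproved.
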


\bibliographystyle{amsalpha}
\bibliography{p2}

\providecommand{\bysame}{\leavevmode\hbox to3em{\hrulefill}\thinspace}
\providecommand{\MR}{\relax\ifhmode\unskip\space\fi MR }
\providecommand{\MRhref}[2]{%
  \href{http://www.ams.org/mathscinet-getitem?mr=#1}{#2}
}
\providecommand{\href}[2]{#2}
\begin{thebibliography}{ACGH85}

\bibitem[ACGH85]{acgh1}
E.~Arbarello, M.~Cornalba, P.~A. Griffiths, and J.~Harris, \emph{Geometry of
  algebraic curves. {V}ol. {I}}, Grundlehren der mathematischen Wissenschaften
  [Fundamental Principles of Mathematical Sciences], vol. 267, Springer-Verlag,
  New York, 1985.

\bibitem[AGV08]{dan-tom-angelo2008}
Dan Abramovich, Tom Graber, and Angelo Vistoli, \emph{Gromov-{W}itten theory of
  {D}eligne-{M}umford stacks}, Amer. J. Math. \textbf{130} (2008), no.~5,
  1337--1398.

\bibitem[AQ12]{artebani-quispe}
Michela Artebani and Sa\"{u}l Quispe, \emph{Fields of moduli and fields of
  definition of odd signature curves}, Arch. Math. (Basel) \textbf{99} (2012),
  no.~4, 333--344.

\bibitem[Bad20]{badr20}
Eslam Badr, \emph{A class of pseudo-real {R}iemann surfaces with diagonal
  automorphism group}, Algebra Colloq. \textbf{27} (2020), no.~2, 247--262.
  \MR{4095938}

\bibitem[Bar05]{bars}
Francesc Bars, \emph{Automorphisms groups of genus 3 curves}, Number Theory
  Seminar UAB-UB-UPC on Genus 3 curves, 2005.

\bibitem[BB19]{badr-bars}
Eslam Badr and Francesc Bars, \emph{Plane model-fields of definition, fields of
  definition, and the field of moduli for smooth plane curves}, J. Number
  Theory \textbf{194} (2019), 278--283.

\bibitem[BB22]{badr-bars6}
\bysame, \emph{The stratification by automorphism groups of smooth plane sextic
  curves}, arxiv:2208.12749, 2022.

\bibitem[BBLG19]{badr-bars-garcia}
Eslam Badr, Francesc Bars, and Elisa Lorenzo~Garc\'{\i}a, \emph{On twists of
  smooth plane curves}, Math. Comp. \textbf{88} (2019), no.~315, 421--438.

\bibitem[Bre24a]{giulio-tqs2}
Giulio Bresciani, \emph{The arithmetic of tame quotient singularities in
  dimension 2}, Int. Math. Res. Not. IMRN (2024), no.~3, 2017--2043.

\bibitem[Bre24b]{giulio-divisor}
\bysame, \emph{The field of moduli of a divisor on a rational curve}, J.
  Algebra \textbf{647} (2024), 72--98.

\bibitem[Bre24c]{giulio-points}
\bysame, \emph{The field of moduli of sets of points in {$\Bbb P^2$}}, Arch.
  Math. (Basel) \textbf{122} (2024), no.~5, 513--519.

\bibitem[Bre24d]{giulio-fmod}
\bysame, \emph{The field of moduli of varieties with a structure}, Boll. Unione
  Mat. Ital. \textbf{17} (2024), no.~3, 613--624.

\bibitem[BV15]{niels-angelo15}
Niels Borne and Angelo Vistoli, \emph{The {N}ori fundamental gerbe of a fibered
  category}, J. Algebraic Geom. \textbf{24} (2015), no.~2, 311--353.

\bibitem[BV24a]{giulio-angelo-valuative}
Giulio Bresciani and Angelo Vistoli, \emph{An arithmetic valuative criterion
  for proper maps of tame algebraic stacks}, Manuscripta Math. \textbf{173}
  (2024), no.~3-4, 1061--1071.

\bibitem[BV24b]{giulio-angelo-moduli}
\bysame, \emph{Fields of moduli and the arithmetic of tame quotient
  singularities}, Compos. Math. \textbf{160} (2024), no.~5, 982--1003.

\bibitem[CH85]{coombes-harbater}
Kevin Coombes and David Harbater, \emph{Hurwitz families and arithmetic
  {G}alois groups}, Duke Math. J. \textbf{52} (1985), no.~4, 821--839.

\bibitem[DE99]{debes-emsalem}
Pierre D\`ebes and Michel Emsalem, \emph{On fields of moduli of curves}, J.
  Algebra \textbf{211} (1999), no.~1, 42--56.

\bibitem[Gey69]{geyer}
Wulf-Dieter Geyer, \emph{Unendliche algebraische {Z}ahlk\"{o}rper, \"{u}ber
  denen jede {G}leichung aufl\"{o}sbar von beschr\"{a}nkter {S}tufe ist}, J.
  Number Theory \textbf{1} (1969), 346--374.

\bibitem[Gir71]{giraud}
Jean Giraud, \emph{Cohomologie non ab\'elienne}, Springer-Verlag, Berlin, 1971,
  Die Grundlehren der mathematischen Wissenschaften, Band 179.

\bibitem[GS17]{gille-szamuely}
Philippe Gille and Tam\'{a}s Szamuely, \emph{Central simple algebras and
  {G}alois cohomology}, Cambridge Studies in Advanced Mathematics, vol. 165,
  Cambridge University Press, Cambridge, 2017.

\bibitem[Hug06]{hugginsphd}
Bonnie Huggins, \emph{Fields of moduli and fields of definition of curves},
  arxiv:0610247, 2006, PhD thesis.

\bibitem[Hug07]{huggins}
\bysame, \emph{Fields of moduli of hyperelliptic curves}, Math. Res. Lett.
  \textbf{14} (2007), no.~2, 249--262.

\bibitem[Kle78]{klein}
Felix Klein, \emph{Ueber die {T}ransformation der elliptischen {F}unctionen und
  die {A}ufl\"osung der {G}leichungen f\"unften {G}rades}, Math. Ann.
  \textbf{14} (1878), no.~1, 111--172.

\bibitem[MBD61]{mbd}
G.~A. Miller, H.~F. Blichfeldt, and L.~E. Dickson, \emph{Theory and
  applications of finite groups}, Dover Publications, Inc., New York, 1961.
  \MR{0123600}

\bibitem[RX18]{roe-xarles}
Joaquim Ro\'{e} and Xavier Xarles, \emph{Galois descent for the gonality of
  curves}, Math. Res. Lett. \textbf{25} (2018), no.~5, 1567--1589.

\bibitem[Sam23]{sambale}
Benjamin Sambale, \emph{On the converse of {G}asch\"{u}tz' complement theorem},
  J. Group Theory \textbf{26} (2023), no.~5, 931--949. \MR{4634728}

\bibitem[{\v{S}}em70]{semetkov}
L.~A. {\v{S}}emetkov, \emph{The existence of {$\Pi $}-complements to normal
  subgroups of finite groups}, Dokl. Akad. Nauk SSSR \textbf{195} (1970),
  50--52. \MR{274591}

\end{thebibliography}

\end{document}